\newcommand{\Q}{\mathbb{Q}}
\newcommand{\N}{\mathbb{N}}
\newcommand{\Z}{\mathbb{Z}}
\newcommand{\R}{\mathbb{R}}
\newcommand{\F}{\mathbb{F}}
\newcommand{\K}{\mathbb{K}}
\newcommand{\C}{\mathbb{C}}
\theoremstyle{plain}
\newtheorem{thm}{Theorem}[section]
\newtheorem{prop}[thm]{Proposition}
\newtheorem{cor}[thm]{Corollary}
\newtheorem{lem}[thm]{Lemma}
\newtheorem{conjecture}[thm]{Conjecture}
\newtheorem*{claim*}{Claim}
\newtheorem*{thm*}{Theorem} 
\newtheorem*{cor*}{Corollary}
\newtheoremstyle{case}{}{}{}{}{}{:}{ }{}
\theoremstyle{case}
\theoremstyle{definition}
\newtheorem{exampleth}[thm]{Example}
\newenvironment{example}{\begin{exampleth}}{\hfill
    $\diamond$\\ \end{exampleth}}
\newtheorem{remark}[thm]{Remark}
\newtheorem{assumptions}[thm]{Assumptions}
\newtheorem{main}{Main Result}
\newtheorem*{exampleth*}{Example}
\newenvironment{example*}{\begin{exampleth*}}{\hfill
    $\diamond$ \end{exampleth*}}
\DeclareMathOperator{\SL}{SL}
\DeclareMathOperator{\diag}{diag}
\DeclareMathOperator{\HypDet}{HypDet}
\DeclareMathOperator{\Mat}{Mat}
\newcommand\ci{\mathbbm{i}}
\begin{document}
	\nocite{*}
	\title{Determinantal representations and the image of the principal minor map}
	
	\author{Abeer Al Ahmadieh}
	\address{Department of Mathematics, University of Washington, Seattle, WA, USA 98195} 
	\email{aka2222@uw.edu}

	\author{Cynthia Vinzant}
	\address{Department of Mathematics, University of Washington, Seattle, WA, USA 98195}
	\email{vinzant@uw.edu}

	\begin{abstract} In this paper we explore 
	determinantal representations of multiaffine polynomials 
	and consequences for the image of various spaces of matrices 
	under the principal minor map. 	
	We show that a real multiaffine polynomial has a definite Hermitian 
	determinantal representation if and only if all of its so-called Rayleigh differences 
	factor as Hermitian squares and use this characterization to conclude that 
	the image of the space of Hermitian matrices under the principal 
	minor map is cut out by the orbit of finitely many equations and inequalities 	
	under the action of $({\rm SL}_2(\mathbb{R}))^{n} \rtimes S_{n}$. 
	We also study such representations over more general fields with quadratic extensions. 
	Factorizations of Rayleigh differences prove an effective tool for capturing subtle behavior of the principal minor map. 
	In contrast to the Hermitian case, we give examples to show for any field $\mathbb{F}$, there is no 
	finite set of equations whose orbit under $({\rm SL}_2(\mathbb{F}))^{n} \rtimes S_{n}$
	cuts out the image of $n\times n$ matrices over $\F$ under the principal minor map for 
	every $n$. 
	\end{abstract}

	\maketitle

	\section{Introduction}\label{sec:Intro}
	Given an $n \times n$ matrix $A$ with entries in a field $\F$, let $A_S$ denote the determinant of the submatrix of $A$ indexed by the set $S$ on the rows and columns. If we set $A_\emptyset = 1$, the principal minors of a matrix form a vector of length $2^n$. The \textit{principal minor map} is the map that assigns to each matrix the vector of its principal minors, namely
	\[
	\varphi: \F^{n\times n} \longrightarrow \F^{2^n} \ \ \text{ given by } \ \ A \longrightarrow \left(A_S\right)_{S\subseteq[n]}.
	\]	
	One of the motivating goals of this paper is to characterize the image of this map. 
	This problem dates back to the $19$th century \cite{muir1900x}, \cite{nanson1897xlvi}. In the cases $n=2$ and $n=3$, this image is characterized by $A_\emptyset = 1$ over $\C$. In the case $n=4$, Lin and Sturmfels \cite{LinSturmfels09} give an explicit list of $65$ polynomials that cutout the image and they conjectured that it is cutout by equations of degree $12$ for any $n$. 
	
	The image of the space of real and complex symmetric matrices was studied by Holtz and Sturmfels \cite{HoltzSturmfels07}, who show that the image is closed and invariant under an action of the group $\SL_2(\mathbb{R})^n \rtimes S_n$ and conjectured that the vanishing of polynomials in the orbit of the \emph{hyperdeterminant} under this group cuts out the image of the principal minor map over $\C$. This conjecture was resolved by Oeding \cite{Oeding11}. In \cite{AV21}, we build of techniques in 	\cite{KPV15} to generalize this result to hold over arbitrary unique factorization domain. Here we use similar techniques to characterize the image of Hermitian matrices.

	The principal minor map problem appears in many  different fields and applications, including statistical models, machine learning, combinatorics and matrix theory. One fundamental application is the study of determinantal point processes (DPP). These are probabilistic models that arise naturally in the study of random matrix theory \cite{johansson2005random} and machine learning \cite{chao2015large, gartrell2016bayesian}. 
	Symmetric DPPs have attracted a lot of attention as they reflect the repulsive behavior in modeling, see \cite{affandi2014learning,borodin2009determinantal, dupuy2018learning, kulesza2012learning,urschel2017learning}. Non-symmetric kernels are also of interest for modeling both repulsive and attractive interactions \cite{alimohammadi2021fractionally,brunel2018learning,gartrell2019learning}.
Learning the parameters of such a model from data leads to the computation problem of reconstructing 
a matrix from the vectors of its principal minors. Griffin and Tsatsomeros \cite{GriffinTsatsomeros06,GriffinTsatsomeros06s} give a numerical algorithm that reconstructs a preimage of a matrix, if it exists over $\C$. Rising, Kulesza and Taskarc \cite{RisingKuleszaTaskar15} provide an efficient algorithm for reconstruction in the symmetric case.

	In this paper, we study the principal minor map via determinantal representations of an associated 
	multivariate polynomial. Explicitly, 
	To each vector ${\bf a} = \left(a_{S}\right)_{S\subset [n]} \in \F^{2^n}$, we assign a multiaffine polynomial $f_{\bf a}$ where $f_{\bf a} = \sum_{S\subset [n]} a_S \prod_{i\in [n]\setminus S} x_i$. This transforms the problem of characterizing the image of the principal minor map to the problem of characterizing multiaffine polynomials with determinantal representation, these are polynomials that can be written in the following form: $f = \det\left(\diag(x_1,\hdots,x_n)+A\right)$ for some $n\times n$ matrix $A$. Symmetric (Hermitian) multiaffine determinantal polynomials are determinantal polynomials that corresponds to symmetric (Hermitian) matrices. In \cite{AV21} we prove that the class of symmetric determinantal multiaffine polynomials is characterized by their Rayleigh differences being squares.
	
	The Rayleigh difference of a polynomial $f$ with respect to $i,j\in [n]$ is defined to be 
	\begin{equation}\label{eq:DeltaDef}
		\Delta_{ij}(f)  = \frac{\partial f}{\partial x_i}  \frac{\partial f}{\partial x_j} - f \frac{\partial^2 f}{\partial x_i \partial x_j}.
	\end{equation}
	
	Here we also use them to characterize Hermitian determinantal multiaffine polynomials over any field $\K$ with an automorphism of order two and deduce a characterization of Hermitian determinantal multiaffine polynomials over $\C$.
	
	\begin{main}[Corollary~\ref{cor:HerCompDetRep}, Theorem~\ref{thm:GenDet}]
	A real multiaffine polynomial $f$ has a linear Hermitian detertminantal representation 
	if and only if all of its Rayleigh differences $\Delta_{ij}(f)$ factor as Hermitian squares. 
	\end{main}
	
	One of the themes of this paper is that factorizations of Rayleigh differences can capture 
	subtle behavior of the principal minor map. 
	
	\begin{example*}[Example~\ref{ex:HerEx}]
	For example the Rayleigh differences of the polynomial 	
	\[
	f_{\bf a}(x_1,x_2,x_3,x_4) = x_1 x_2 x_3 x_4 - x_1 x_2 -  x_1 x_3 -  x_1 x_4 - x_2 x_3 - x_2 x_4 -  x_3 x_4 +1
	\]
factor into Hermitian squares in multiple ways, e.g. $\Delta_{34}(f_{\bf a}) = (x_1 - \ci)(x_1 + \ci) (x_2 - \ci)(x_2+\ci)$.
These different choices of factorization capture some non-generic behavior in the fiber of the principal minor map and lead to three different determinantal representations of $f$: 
	\[
	\left\{
	\small\begin{pmatrix}
		x_1 & -1 & \ci & \ci \\ -1 & x_2 & -1 & -1 \\ -\ci & -1 & x_3 & -\ci \\ -\ci & -1 & \ci & x_4
	\end{pmatrix}, \ \ 
	\small\begin{pmatrix}
		x_1 & -1 & \ci & \ci \\ -1 & x_2 & -1 & -1 \\ -\ci & -1 & x_3 & \ci \\ -\ci & -1 & -\ci & x_4
	\end{pmatrix}, \ \ 
	\small\begin{pmatrix}
		x_1 & -1 & \ci & -\ci \\ -1 & x_2 & -1 & -1 \\ -\ci & -1 & x_3 & -\ci \\ \ci & -1 & \ci & x_4
	\end{pmatrix}
	\right\}.
	\]	
	\end{example*}
	In general, the fibers of the principal minor map are not well understood.
	In the symmetric case, the fibers were characterized by Engel and Schneider \cite{engel1980matrices}.  In $1984$, Loewy and Hartfiel \cite{hartfiel1984matrices} and then Loewy \cite{loewy1986principal} gave sufficient conditions for two general matrices to be \textit{diagonally similar} and hence to belong to the same fiber, but as the example above shows, the fiber in general can be larger. In future work, we hope to use the techniques developed in this paper to give a better understanding of the fibers of the principal minor map.  
	
	Here we use the classical theory of determinantal representations to understand the principal minor map, including ideas from Dixon \cite{dixon} on the construction of symmetric determinantal representations of plane curves. 
	The study of symmetric and Hermitian determinantal representations is also closely 
	related to the theory of hyperbolic and real stable polynomials, which are multivariable  
	generalizations of real-rooted univariate polynomials. 
	Since then, hyperbolic and stable polynomials have found wide-spread applications in 
combinatorics \cite{GurvitsPerm, RamGraph}, convex analysis \cite{ConvAnalysis}, operator theory \cite{HeltonVinnikov07,kadSing}, 
probability \cite{LiggetNegDep}, and theoretical computer science \cite{AOR, LGS_TCS, SV_TCS}. 
	The question of which stable polynomials have definite Hermitian determinantal representations has implications in operator theory and the theory of convex optimization. See \cite{VinnikovSurvey} for more. 
In general, the existence of definite Hermitian representations does not follow from the existence of general representations over $\C$.   In this paper we show that this is not the case for multiaffine polynomials. 

	\begin{main}[Theorem~\ref{thm:DetStableHerm}]
 If a multiaffine real stable polynomial $f$ has a linear determinantal representation over $\C$, then it has a definite Hermitian determinantal representation.
	\end{main}

From the classification above, we characterize the image of Hermitian matrices under the principal minor
map by characterizing the set of real multiquadratic polynomials that factor as Hermitian squares.
This leads to explicit equations and inequalities defining the image.  

	\begin{main}[Corollary~\ref{cor:ImgHer}]
	The image of the set of $n\times n$ Hermitian matrices under the principal minor map 
	is cut out by the orbit under $\SL_2(\R)^n \rtimes S_n$ 
	of two inequalities and three degree-12 equations defined by polynomials in 
	$ \Q[a_S:S\subseteq 4]$. 
\end{main}

An explicit description of the image of general $n\times n$ matrices remains mysterious. Huang and Oeding \cite{HuangOeding17} give description of the image in the special case where all principal minors of same size are equal (\textit{the symmetrized principal minor assignment problem}) where they use the cycle sums in their approach. They provide a minimal parametrization of the respective varieties in the cases of symmetric, skew symmetric and square complex matrices. Kenyon and Pemantle \cite{KenyonPemantle14} adjust the principal minor map by adding \textit{the almost principal minors} to the vector in the image and they showed that the ideal of the variety in this case is generated by translations of a single relation using the \text{rhombus tiling}. 
	
	Using factorizations of Rayleigh differences, we found a family of examples 
	that shows that for general $n\times n$ matrices, such a finite description is impossible. 	

	\begin{main}[Theorem~\ref{thm:GenCase}]
		For any field $\F$, there is \emph{no finite set of equations} 
		whose orbit under ${\rm SL}_2(\F)^n\rtimes S_n$ 
		cuts out the image of the principal minor map for all $n$.
	\end{main}
	In the case $n=5$ of instance, the polynomial 
	\begin{equation}\label{eq:f5}
		f = x_1 (x_3 x_4 + 1)(x_2 x_5+1) + (x_2 x_3 + 1)(x_4 x_5 + 1)
	\end{equation}
	is not determinantal, i.e. its vector of coefficients do not belong to the image of the principal minor map, but it is determinantal after specializing any one variable.

	This paper is organized as follows. 
	In Section~\ref{sec:Background}, we introduce terminology and the basic properties of 
	determinantal representations and the action of ${\rm SL}_2(\F)^n\rtimes S_n$. 
	In Section~\ref{sec:DetRep}, we give a characterization of multiaffine determinantal polynomials 
	involving the factoring of Rayleigh differences. 
	For Hermitian determinantal representations, this condition simplifies and we give an 
	algorithm for constructing such representations from a factorization, 	
	as described in Section~\ref{sec:MA_algebra} and Section~\ref{sec:otherDetRep}. 
	In Section~\ref{sec:StableDetRep} 
	we give a characterization of multiaffine stable determinantal polynomials and 
	prove Theorem~\ref{thm:DetStableHerm}.
	In Section~\ref{sec:factoringMQ},
	we translate these conditions into explicit equations and inequalities whose orbit 
	under ${\rm SL}_2(\R)^n\rtimes S_n$ cuts of the image of Hermitian matrices 
	under the principal minor map.  Finally, in Section~\ref{sec:counterex}, we conclude 
	by presenting a family of examples that disproves the existing of such a finite description 
	for the image of general $n\times n$ matrices under the principal minor map.

{\bf Acknowledgements}. We would like to thank Mario Kummer and Bernd Sturmfels for 
the helpful comments and discussions.
Both authors were partially supported by the National Science Foundation Grant DMS-1943363.

\section{Background and notation}\label{sec:Background}
	
	For a commutative ring $R$, we use $R[{\bf x}]$ to denote the polynomial ring $R[x_1, \hdots, x_n]$ and for $f\in R[{\bf x}]$, 
	we use $\deg_i(f)$ to denote the degree of $f$ in the variable $x_i$. 
	For ${\bf d} = ({\bf d}_1, \hdots, {\bf d}_n)$ with $d_i \in \Z_{\geq 0}$, let $\F[{\bf x}]_{\leq {\bf d}}$ denote the set of 
	polynomials with degree at most ${\bf d}_i$ in $x_i$ for each $i=1, \hdots, n$. These form an $R$-module  
	of rank $\prod_{i=1}^n ({\bf d}_i+1)$. 
	When ${\bf d}_1 = \hdots = {\bf d}_n = m$, we abbreviate $R[{\bf x}]_{\leq (m,\hdots m)}$ by 
	$R[{\bf x}]_{\leq {\bf m}}$.  
	Of particular interest are \emph{multiaffine polynomials}, with degree $\leq 1$ in each variable, and \emph{multiquadratic polynomials}, with degree $\leq 2$ in each variable. 
	These are denoted by  $R[{\bf x}]_{\leq {\bf 1}} = R[{\bf x}]_{\rm MA}$ and $R[{\bf x}]_{\leq {\bf 2}} = R[{\bf x}]_{\rm MQ}$, respectively. 
	
	We use ${\rm Mat}_n(\F)$ to denote the set of $n\times n$ matrices with entries in $\F$. 
	When $\K$ is a field with an automorphic involution $a\mapsto \overline{a}$, 
	we use ${\rm Her}_n(\K)$ to denote the set of matrices $A\in  {\rm Mat}_n(\K)$ for which $\overline{A} = A^T$. Note that for $\K = \C$ and  $a\mapsto \overline{a}$ given by complex conjugation, this is the usual set of $n\times n$ Hermitian matrices.

\subsection{The action of ${\rm SL_2}(R)^n\rtimes S_n$ and homogenezations} The action of $\SL_2(R)^n$ on $R[{\bf x}]_{\leq {\bf d}}$ is defined as follows. 
	Let $\gamma =(\gamma_i)_{i\in [n]} $ in $\SL_2(R)^{n}$ where $\gamma_i = \tiny{\begin{pmatrix} a_i& b_i  \\ c_i & d_i \end{pmatrix}}$. Then for $f \in R[{\bf x}]_{\leq {\bf d}}$, 
	\[
	\gamma\cdot f = \prod_{i=1}^n (c_i x_i + d_i)^{{\bf d}_i} \cdot f\left(\frac{a_1 x_1 + b_1}{c_1 x_1 + d_1}, \hdots, \frac{a_n x_n + b_n}{c_n x_n + d_n}\right).
	\]
	One way to interpret this action is with the multihomogenezation of $f$. Let 
	$f^{\rm {\bf d}-hom}$ in $ R[x_1, \hdots, x_n, y_1, \hdots, y_n]_{\bf d}$ denote the polynomial 
	\[
	f^{\rm {\bf d}-hom} = \prod_{i=1}^n y_i^{{\bf d}_i} \cdot f\left(x_1/y_1, \hdots, x_n/y_n\right). 
	\]
	The induced action of $\gamma$ on $f^{\rm {\bf d}-hom} $ is just a linear change of coordinates: 
	\[\gamma \cdot f^{\rm {\bf d}-hom} = f^{\rm {\bf d}-hom}\left(\gamma_1\cdot\begin{pmatrix} x_1 \\ y_1\end{pmatrix}, \hdots,  \gamma_n\cdot\begin{pmatrix} x_n \\ y_n\end{pmatrix}\right).\]
	Restricting to $y_1 = \hdots y_n = 1$ gives back $\gamma\cdot f $.

We will also use the usual homogenization of a polynomial to some total degree $d$, using a single homogenizing variable $y$. 
That is, for $f = \sum_{\alpha}c_{\alpha}{\bf x}^{\alpha}\in R[{\bf x}]$ of total degree $d=\deg(f)$, 
its homogenization is 
\[
f^{\rm hom} = y^d f\left(x_1/y, \hdots, x_n/y\right)=
 \sum_{\alpha}c_{\alpha}{\bf x}^{\alpha}y^{d - |\alpha|}  \ \in \ R[{\bf x}, y].
 \]

Suppose that $\K$ is a field with an automorphic involution $a\mapsto \overline{a}$ with fixed field $\F$. 
This extends to an involution on $\K[{\bf x}]$ by acting on the coefficients. 
We will say that a polynomial $q\in \F[{\bf x}]$ is a \emph{Hermitian square} 
if $q = g\overline{g}$ for some $g\in \K[{\bf x}]$. 
To end this section, we remark that for $f\in \F[{\bf x}]$, 
the condition that $\Delta_{ij}(f)$ 
is a Hermitian square is robust to homogenization. 

\begin{prop}\label{prop:homDelta}
Suppose that $\K$ is a field with an automorphic involution $a\mapsto \overline{a}$ with fixed field $\F$. 
Let $f\in \F[{\bf x}]$. 	For $i,j\in [n]$, the polynomial  
	$\Delta_{ij}(f)$ is a  Hermitian square if any only if 
	$\Delta_{ij}(f^{\rm hom})$ is a  Hermitian square. 
\end{prop}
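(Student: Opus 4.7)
The plan is to derive a single identity expressing $\Delta_{ij}(f^{\rm hom})$ as a rescaled homogenization of $\Delta_{ij}(f)$, and then apply it in each direction. Writing $d = \deg(f)$, differentiating $f^{\rm hom} = y^d f(x_1/y,\ldots,x_n/y)$ yields
\[
\partial_{x_i}(f^{\rm hom}) \;=\; y^{d-1}\,(\partial_i f)(x_1/y,\ldots,x_n/y),
\]
and the analogous formula with $y^{d-2}$ in front for the mixed second partial $\partial_{x_i}\partial_{x_j}(f^{\rm hom})$. Substituting these into the definition \eqref{eq:DeltaDef} of $\Delta_{ij}$, the $y$-exponents combine to give the key identity
\[
\Delta_{ij}(f^{\rm hom}) \;=\; y^{2d-2}\,\Delta_{ij}(f)(x_1/y,\ldots,x_n/y).
\]

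For the ``if'' direction I would specialize $y=1$. Since the involution acts only on coefficients of $\K[{\bf x},y]$, it commutes with the substitution $y=1$. Hence if $\Delta_{ij}(f^{\rm hom}) = h\,\overline{h}$ for some $h \in \K[{\bf x},y]$, then setting $g := h({\bf x},1) \in \K[{\bf x}]$ gives
\[
g\overline{g} \;=\; h({\bf x},1)\,\overline{h}({\bf x},1) \;=\; \Delta_{ij}(f^{\rm hom})\big|_{y=1} \;=\; \Delta_{ij}(f).
\]

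For the ``only if'' direction, suppose $\Delta_{ij}(f) = g\overline{g}$ with $g \in \K[{\bf x}]$. Since $\deg(\Delta_{ij}(f))\leq 2d-2$, we must have $e := \deg(g) \leq d-1$. Let $g^{\sharp} := y^e\, g(x_1/y,\ldots,x_n/y)\in \K[{\bf x},y]$ be the standard homogenization of $g$; since the involution commutes with substitution, $\overline{g^{\sharp}} = y^e\,\overline{g}(x_1/y,\ldots,x_n/y)$. The key identity then becomes
\[
\Delta_{ij}(f^{\rm hom}) \;=\; y^{2d-2-2e}\cdot g^{\sharp}\,\overline{g^{\sharp}} \;=\; \bigl(y^{d-1-e}\, g^{\sharp}\bigr)\,\overline{\bigl(y^{d-1-e}\, g^{\sharp}\bigr)},
\]
using that $y^{d-1-e}$ has coefficients in $\F$ and is therefore fixed by the involution.

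There is no serious obstacle; once the key identity is in place the proof is essentially bookkeeping. The one minor subtlety is the mismatch between the exponent $2d-2$ arising naturally on the right-hand side and the possibly smaller degree $2e$ of $g\overline{g}$. This is harmless because the compensating factor $y^{2d-2-2e}$ has even exponent and fixed-field coefficients, so it can be split off symmetrically into the two conjugate factors of a Hermitian square.
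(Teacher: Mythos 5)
Your proof is correct and follows essentially the same route as the paper's: both directions hinge on the identity $\Delta_{ij}(f^{\rm hom}) = y^{2d-2-2e}(\Delta_{ij}f)^{\rm hom}$ and on homogenizing $g$ to degree $d-1$ (the paper's ``$2d-1$'' is a typo). The only cosmetic difference is that you derive the identity directly from the chain rule, whereas the paper deduces it from the homogeneity of $\Delta_{ij}(f^{\rm hom})$ together with its restriction to $y=1$; either derivation is fine.
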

\begin{proof}
If $\Delta_{ij}(f^{\rm hom})$ is a Hermitian square, then specializing to $y=1$ 
gives a representation of $\Delta_{ij}(f)$ as a Hermitian square. 
For the converse, 
let $f\in \F[{\bf x}]$ with total degree $d$ and suppose that 
$\Delta_{ij}f = g\overline{g}$ for some $g \in \K[{\bf x}]$. Let $m =\deg(g)= \deg(\overline{g})$. 
By definition, $\Delta_{ij}(f^{\rm hom}) \in \F[{\bf x}, y]$ 
is homogeneous of degree $2d-2$. Its restriction to $y=1$ equals $\Delta_{ij}f$. 
Therefore $\Delta_{ij}(f^{\rm hom})$ equals $y^{2d-2-2m}(\Delta_{ij}(f))^{\rm hom}$, 
which is the Hermitian square $h\overline{h}$ where $h$ is the homogenezation of $g$ 
to total degree $2d-1$. 
\end{proof}

\subsection{The action of $\SL_2(\F)$ on matrices}
Given a matrix $A\in \Mat_n(\F)$, consider the multiaffine polynomial $f = \det\left(\diag\left(x_1,\hdots,x_n\right)+ A\right)$. 
For $\gamma =(\gamma_i)_{i\in [n]} $ in $\SL_2(\F)^{n}$ with $\gamma_i = \tiny{\begin{pmatrix} a_i& b_i  \\ c_i & d_i \end{pmatrix}}$, $\gamma \cdot f$ is defined by:
\[
\gamma\cdot f = \prod_{i=1}^n (c_i x_i + d_i) \cdot \det\left(\diag\left(\frac{a_1 x_1 + b_1}{c_1 x_1 + d_1}, \hdots, \frac{a_n x_n + b_n}{c_n x_n + d_n}\right)+ A\right).
\]
Let $A_i$ denote the $i$th column of $A$ and $e_i$ the vector whose $i$th entry is one and zero otherwise. 
By using the factor $ (c_i x_i + d_i) $ to scale the $i$th column, we see that 
\[
\gamma \cdot f = \det\left(C\diag(x_1, \hdots, x_n)+ B\right)
\]
where $C$ is the matrix with $i$th column $C_i = (a_ie_i + c_iA_i)$ and $B$ is the matrix with $i$th column $B_i = b_ie_i + d_i A_i$.  
When the matrix $C$ is invertible, this gives 
\[
\gamma\cdot f = \det(C)\det\left(\diag\left(x_1, \hdots, x_n\right)+ C^{-1}B\right).
\]
Up to the scalar multiple $\det(C)$, the coefficients of $\gamma\cdot f$ are the principal minors of the matrix $C^{-1}B$.

\subsection{Resultants}	
For two univariate polynomials $a = \sum_{j=0}^d a_j t^j$ with $a_d\neq 0$ and $b = b_1t + b_0$ with $b_1\neq 0$ we define the resultant of $a$, $b$ with respect to the variable $t$ to be
	\[
	{\rm Res}_{t}(a,b) = \sum_{j=0}^d a_j (-b_0)^j(b_1)^{d-j}.
	\]	
	Over an algebraically closed field, this polynomial vanishes if and only if the univariate polynomials $a$ and $b$ have a common root. 
See, for example, \cite[\S3.5]{CLO}.  
We will focus on multiaffine polynomials and so focus on resultants in degree $d=1$. 
For $k=1, \hdots, n$, define
\[
{\rm res}_{x_k}(g,h) = (g|_{x_k=0}) \cdot \frac{\partial}{\partial x_k} h - (h|_{x_k=0}) \cdot \frac{\partial}{\partial x_k}g. 
\]
In particular, if $g$ and $h$ both have degree one in $x_k$, this agrees with ${\rm Res}_{x_k}(g,h)$. The benefit of this degree-dependent definition is that it is invariant under the action of $\SL_2(R)$. 

If $f\in R[{\bf x}]$ has degree $\leq 1$ in both $x_i$ and $x_j$, then
\begin{equation}\label{eq:DeltaRes}
\Delta_{ij}(f) = {\rm res}_{x_i}\left(\frac{\partial f}{\partial x_j}, f|_{x_j=0} \right)
= {\rm res}_{x_j}\left(\frac{\partial f}{\partial x_i}, f|_{x_i=0} \right).
\end{equation}

\begin{prop}\label{prop:DeltaZero}
If $f\in R[x_1, \hdots, x_n]$ has degree one in each of $x_i$ and $x_j$, 
then $\Delta_{ij}(f)=0$ if and only if $f$ factors into polynomial $g\cdot h$ with 
$g\in R[x_k : k\neq i]$ and $h\in R[x_k:k\neq j]$.
\end{prop}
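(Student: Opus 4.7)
The plan is to work throughout with the expansion of $f$ as a bidegree-$(1,1)$ polynomial in $x_i$ and $x_j$ with coefficients in the sub-ring $R' := R[x_k : k \neq i, j]$. Writing
\[
f \ = \ A\,x_i x_j + B\,x_i + C\,x_j + D
\]
with $A, B, C, D \in R'$, I would first compute $\partial f/\partial x_i = A x_j + B$, $\partial f/\partial x_j = A x_i + C$ and $\partial^2 f/\partial x_i \partial x_j = A$, and plug these into \eqref{eq:DeltaDef} to obtain the key identity $\Delta_{ij}(f) = BC - AD \in R'$. (One could equivalently cite \eqref{eq:DeltaRes} with $g = \partial f/\partial x_j$ and $h = f|_{x_j=0} = Bx_i + D$.)

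For the backward direction, I would assume $f = gh$ with $g \in R[x_k : k \neq i]$ and $h \in R[x_k : k \neq j]$. A direct application of the product rule gives $\partial f/\partial x_i = g \cdot \partial h/\partial x_i$, $\partial f/\partial x_j = (\partial g/\partial x_j)\cdot h$, and $\partial^2 f/\partial x_i \partial x_j = (\partial g/\partial x_j)(\partial h/\partial x_i)$. Substituting into \eqref{eq:DeltaDef}, the two terms cancel and $\Delta_{ij}(f) = 0$.

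For the forward direction, the hypothesis becomes $AD = BC$ in $R'$. To produce the factorization, I would view $f$ as linear in $x_i$: $f = P\,x_i + Q$, where $P = A x_j + B$ and $Q = C x_j + D$ both lie in $R'[x_j]$. The relation $AD = BC$ is precisely the vanishing of the resultant ${\rm Res}_{x_j}(P,Q)$; over a unique factorization domain this forces $P$ and $Q$ to share a common factor $g_0 \in R'[x_j]$. Writing $P = g_0 \cdot \gamma$ and $Q = g_0 \cdot \delta$ with $\gamma, \delta \in R'$, I would obtain
\[
f \ = \ g_0 \cdot (\gamma x_i + \delta),
\]
with the first factor in $R[x_k : k \neq i]$ and the second in $R[x_k : k \neq j]$, as required.

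The main obstacle is the extraction of the common factor $g_0$ from the ring-theoretic identity $AD = BC$; this is routine over a UFD (and so in particular over any field), but the degenerate cases in which one of the leading coefficients $A, C$ vanishes require separate handling. For instance, if $A = 0$ then $BC = 0$ in $R'$, and (assuming $R'$ is a domain) either $B = 0$ gives $f = (Cx_j + D)\cdot 1$, or $C = 0$ gives $f = 1\cdot (B x_i + D)$; either subcase produces the required factorization directly.
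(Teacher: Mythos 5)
Your proposal is correct and proceeds in essentially the same way as the paper: both reduce to the identity $\Delta_{ij}(f)=BC-AD$ in $R'=R[x_k:k\neq i,j]$ and then use unique factorization to extract the product decomposition. The one step you should flesh out is the claim that a vanishing resultant over the UFD $R'$ forces a common factor $g_0$ with cofactors $\gamma=P/g_0,\ \delta=Q/g_0$ actually lying in $R'$ (constant in $x_j$): this needs, e.g., Gauss's lemma to see the primitive parts of $P$ and $Q$ are associate degree-one polynomials in $x_j$, whereas the paper's route sidesteps this by directly factoring $BC=AD$ as $B=b_1b_2$, $C=c_1c_2$ with $A=b_1c_1$, $D=b_2c_2$, yielding $f=(b_1x_i+c_2)(c_1x_j+b_2)$ in one stroke. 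Also note that under the hypothesis that $f$ has degree one in \emph{both} $x_i$ and $x_j$ and $R'$ is a domain, the degenerate case $A=0$ with $\Delta_{ij}(f)=0$ cannot occur (it would force $B=0$ or $C=0$), so your separate handling there is unnecessary.
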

\begin{proof}
By assumption we can write $f = ax_ix_j + b x_i + c x_j + d$ for $a,b,c,d\in R[x_k: k\neq i,j]$. 
Then $\Delta_{ij}(f) = bc-ad$. If $\Delta_{ij}(f) = 0$, then there is some factorization 
$b = b_1 b_2$ and $c = c_1c_2$ for which $a = b_1c_1$ and $d = b_2c_2$.  
Then $f = (b_1 x_i + c_2)(c_1x_j + b_2)$.  Similarly, if $f = (b_1 x_i + c_2)(c_1x_j + b_2)$ for some $b_1, b_2, c_1, c_2\in R[x_k: k\neq i,j]$, then $\Delta_{ij}(f) = bc-ad=0$. 
\end{proof}

\begin{prop}\label{prop:SL2_on_res}
Let $g\in R[{\bf x}]_{\leq {\bf d}}$ and $h\in R[{\bf x}]_{\leq {\bf e}}$ 
with ${\bf d}_k = {\bf e}_k = 1$. 
For $\gamma\in {\rm SL}_2(R)^n$,
\[
 \gamma \cdot {\rm res}_{k}(g, h) 
=  {\rm res}_{k}(\gamma \cdot g, \gamma \cdot h),
\]
where $\gamma$ acts of on ${\rm res}_{k}(g, h)$ as 
polynomial of multidegree $\leq  {\bf d} + {\bf e} - 2\cdot {\bf 1}_k$ with ${\bf 1}_k$ is the vector with 
$k$th entry is $1$ and zero otherwise. 
\end{prop}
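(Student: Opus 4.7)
The plan is to reduce to checking the identity one variable at a time, using the fact that the action of $\gamma = (\gamma_1, \hdots, \gamma_n)$ factors as a composition of commuting operators, one for each coordinate. Concretely, I would write $g = g_1 x_k + g_0$ and $h = h_1 x_k + h_0$ with $g_0, g_1, h_0, h_1 \in R[x_j : j\neq k]$, so that by definition ${\rm res}_k(g,h) = g_0 h_1 - g_1 h_0$, a polynomial of degree $0$ in $x_k$ and of $x_i$-degree at most ${\bf d}_i + {\bf e}_i$ for $i\neq k$. It then suffices to treat separately the case where $\gamma$ is supported in the $k$th slot and the case where $\gamma$ is supported in a slot $i\neq k$, since the general $\gamma$ is a composition of such factors acting on distinct variables.

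For $\gamma$ concentrated in the $k$th slot, writing $\gamma_k = \bigl(\begin{smallmatrix} a_k & b_k \\ c_k & d_k\end{smallmatrix}\bigr)$ and using the hypothesis ${\bf d}_k = {\bf e}_k = 1$, a direct expansion gives
\[
\gamma\cdot g \;=\; (a_k g_1 + c_k g_0)\, x_k + (b_k g_1 + d_k g_0), \qquad
\gamma\cdot h \;=\; (a_k h_1 + c_k h_0)\, x_k + (b_k h_1 + d_k h_0).
\]
The $2\times 2$ determinantal structure of the resultant then yields ${\rm res}_k(\gamma\cdot g,\gamma\cdot h) = (a_kd_k - b_kc_k)(g_0h_1 - g_1h_0)$, which equals $g_0h_1 - g_1h_0$ because $\det(\gamma_k)=1$. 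On the other hand, ${\rm res}_k(g,h)$ has $x_k$-degree zero, and the action of $\gamma_k$ on a polynomial of degree zero in $x_k$ in the multidegree ${\bf d}+{\bf e}-2\cdot {\bf 1}_k$ is trivial (the prefactor is $(c_kx_k+d_k)^0=1$ and there is no $x_k$ to substitute into). So both sides agree.

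For $\gamma$ concentrated in a slot $i\neq k$, I would use that $\gamma_i$ acts on a polynomial of given $x_i$-degree bound $m$ by substituting $x_i \mapsto (a_ix_i+b_i)/(c_ix_i+d_i)$ and multiplying by $(c_ix_i+d_i)^m$. Since substitution in $x_i$ is a ring homomorphism and commutes with the linear combination $g_0h_1 - g_1h_0$, the only bookkeeping is the power of $(c_ix_i+d_i)$. Viewing $g$ with $x_i$-degree bound ${\bf d}_i$ and $h$ with bound ${\bf e}_i$, the prefactors in $\gamma\cdot g$ and $\gamma\cdot h$ contribute $(c_ix_i+d_i)^{{\bf d}_i+{\bf e}_i}$ to ${\rm res}_k(\gamma\cdot g,\gamma\cdot h)$, which is exactly the prefactor needed for $\gamma$ to act on ${\rm res}_k(g,h)$ in multidegree ${\bf d}+{\bf e}-2\cdot {\bf 1}_k$. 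Combining the two cases over all slots gives the proposition.

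The main obstacle I anticipate is purely bookkeeping: the action of $\gamma$ depends on the declared multidegree and not only on the polynomial, so one has to verify that the specific multidegree ${\bf d}+{\bf e}-2\cdot{\bf 1}_k$ in the statement is the one that makes the prefactors on the two sides match exactly (rather than differing by some spurious $(c_ix_i+d_i)$ power). No real algebra beyond the identity $\det(\gamma_k)=1$ is required; the genuine content lives in the first case, and the second case is almost formal once the degree bounds are tracked consistently.
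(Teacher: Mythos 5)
Your proof is correct and takes essentially the same approach as the paper: write $g = g_1 x_k + g_0$, $h = h_1 x_k + h_0$, reduce to a single $\SL_2$ factor acting in one slot, handle the slot $k$ by the $2\times 2$ determinant calculation with $\det(\gamma_k)=1$ (the paper phrases this as a matrix factorization rather than a direct expansion, but it is the same computation), and handle the other slots by commuting the action through $g_0, g_1, h_0, h_1$ with the degree bookkeeping you describe. No gap.
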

\begin{proof}
Write $g = g_1x_k +g_0$ and $h = h_1x_k + h_0$ where $g_1, g_0, h_1,h_0$
are polynomials in the polynomial ring $R[x_j: j\neq k]$. 
The resultant ${\rm res}_{k}(g, h)$ is the determinant of the $2\times 2$
matrix ${\small \begin{pmatrix} 
h_1 & h_0 \\ 
g_1 & g_0 
\end{pmatrix}}$. 
Consider $\gamma = {\small \begin{pmatrix} a & b \\ c & d \end{pmatrix}}\in \SL_2(R)$ acting on the $j$th coordinate. 
If $j=k$, then 
\[
\gamma\cdot g =  g_1(ax_k+b) +g_0(c x_k+d), \ \ \text{ and } \ \ 
\gamma\cdot h =  h_1(ax_k+b) +h_0(c x_k+d).
\ 
\]
Taking coefficients with respect to $\{1,x_k\}$, we see that 
the ${\rm res}_{x_k}(\gamma\cdot g, \gamma \cdot h)$
equals
\[\det\begin{pmatrix}
a h_1 + c h_0  & b h_1 + d h_0\\
a g_1 + c g_0 & b g_1 + d g_0 
\end{pmatrix}
= 
\det\left(\begin{pmatrix} 
h_1 & h_0 \\ 
g_1 & g_0 
\end{pmatrix}
\begin{pmatrix} 
a & b \\ 
c & d 
\end{pmatrix}\right)= 
\det\begin{pmatrix} 
h_1 & h_0 \\ 
g_1 & g_0 
\end{pmatrix} = {\rm res}_{x_k}( g,  h).
\]
Since $\gamma$ acts on $R[{\bf x}]_{\leq {\bf d} + {\bf e} - 2\cdot {\bf 1}_k}$ as the identity, 
this equals $\gamma\cdot {\rm res}_{x_k}( g,  h)$. 

If $j\neq k$, then 
$\gamma\cdot g = (\gamma\cdot g_1)x_k + (\gamma \cdot g_0)$ 
and $\gamma\cdot h = (\gamma\cdot h_1)x_k + (\gamma \cdot h_0)$, 
where $\gamma$ acts on $g_1,g_0$ and $h_1, h_0$ as elements of 
multidegree ${\bf d}-{\bf 1}_k$ and ${\bf e}-{\bf 1}_k$, respectively.
It follows that 
\[
 {\rm res}_{x_k}(\gamma\cdot g, \gamma \cdot h)
=\det\begin{pmatrix} 
\gamma\cdot h_1 & \gamma\cdot h_0 \\ 
\gamma\cdot g_1 & \gamma \cdot g_0 
\end{pmatrix} =\gamma\cdot {\rm res}_{x_k}( g,  h).
\]
\end{proof}

From \eqref{eq:DeltaRes}, this gives the following: 
\begin{cor}\label{cor:SL2onDelta}
Consider an element $\gamma \in \SL_2(R)^{n}$ that acts by 
$\tiny{\begin{pmatrix} a&  b \\ c & d \end{pmatrix}}$ in the $k$-th coordinate 
and the identity in all others.  For any $f\in R[{\bf x}]_{\leq {\bf 1}}$, 
\[
\Delta_{ij}(\gamma \cdot f) = \begin{cases} 
 \Delta_{ij}(f) & \text{ if } k = i, j \\
\gamma \cdot \Delta_{ij}(f) & \text{ otherwise.}
\end{cases}
 \]
 \end{cor}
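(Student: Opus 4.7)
My plan is to reduce both cases to the identity $\Delta_{ij}(f) = {\rm res}_{x_j}(\partial_i f, f|_{x_i=0})$ from \eqref{eq:DeltaRes} and to apply Proposition~\ref{prop:SL2_on_res} whenever possible. Since $\Delta_{ij}(f) = \Delta_{ji}(f)$, by swapping $i$ and $j$ if necessary I may assume throughout that $k \neq j$; the subcase $k = j$ of the first branch then follows by symmetry. This leaves two cases to analyze: $k \notin \{i,j\}$ and $k = i$.

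For $k \notin \{i,j\}$, the action of $\gamma$ only touches $x_k$, so it commutes with $\partial/\partial x_i$ and with the restriction $x_i = 0$. Explicitly, $\partial_i(\gamma \cdot f) = \gamma \cdot (\partial_i f)$ and $(\gamma \cdot f)|_{x_i = 0} = \gamma \cdot (f|_{x_i=0})$, where on the right each factor is viewed as a polynomial whose $x_i$-multidegree has dropped to zero. Since $k \neq j$, Proposition~\ref{prop:SL2_on_res} then yields
\[
\Delta_{ij}(\gamma \cdot f) = {\rm res}_{x_j}\bigl(\gamma \cdot \partial_i f,\, \gamma \cdot (f|_{x_i=0})\bigr) = \gamma \cdot {\rm res}_{x_j}(\partial_i f,\, f|_{x_i=0}) = \gamma \cdot \Delta_{ij}(f).
\]

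For $k = i$, a direct computation is required, since $\gamma$ no longer commutes with $\partial_i$ or with restriction at $x_i=0$. Writing $f = x_i f_1 + f_0$ with $f_1, f_0 \in R[x_\ell : \ell \neq i]$, expansion of the definition of the action gives $\gamma \cdot f = x_i(af_1 + cf_0) + (bf_1 + df_0)$, so $\partial_i(\gamma \cdot f) = af_1 + cf_0$ and $(\gamma \cdot f)|_{x_i=0} = bf_1 + df_0$. The resultant ${\rm res}_{x_j}$ of two polynomials of $x_j$-degree $\leq 1$ is the determinant of the $2 \times 2$ matrix of their coefficients in $\{1, x_j\}$, so applying the invertible transformation with matrix $\begin{pmatrix} a & c \\ b & d\end{pmatrix}$ to the pair $(f_1, f_0)$ multiplies the resultant by $ad - bc = 1$, giving
\[
\Delta_{ij}(\gamma \cdot f) = {\rm res}_{x_j}(af_1 + cf_0,\, bf_1 + df_0) = (ad-bc)\, {\rm res}_{x_j}(f_1, f_0) = \Delta_{ij}(f).
\]

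The main obstacle here is really bookkeeping of multidegrees so that Proposition~\ref{prop:SL2_on_res} applies cleanly; the substantive point is that when $k \in \{i, j\}$, the failure of $\gamma$ to commute with $\partial_i$ (or with the restriction) is precisely what produces the scalar factor $ad - bc$, and this factor collapses to $1$ exactly because $\gamma \in \SL_2(R)$.
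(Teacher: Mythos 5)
Your proof is correct and follows the same essential route the paper intends: express $\Delta_{ij}$ as a resultant via \eqref{eq:DeltaRes}, commute the one-coordinate action of $\gamma$ past the unaffected differentiation and restriction, and appeal to the $\SL_2$-invariance of the resultant.

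The only small inefficiency is in the case $k=i$. Rather than redoing the $2\times 2$ determinant computation by hand, one can switch to the other expression in \eqref{eq:DeltaRes}, namely $\Delta_{ij}(f) = {\rm res}_{x_i}\bigl(\partial_j f,\, f|_{x_j=0}\bigr)$. Then $\gamma$ (acting only on $x_i$) still commutes with $\partial_j$ and with restriction at $x_j=0$, and Proposition~\ref{prop:SL2_on_res} applies directly; the scalar $ad-bc$ never surfaces because $\gamma$ acts trivially on the output, which has multidegree $0$ in $x_i$. Your explicit computation is, in effect, re-deriving the $j=k$ subcase of the proof of Proposition~\ref{prop:SL2_on_res}, and the determinant you compute is exactly the one appearing there. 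Both routes are valid; the paper's simply reuses Proposition~\ref{prop:SL2_on_res} once more instead of unpacking it.
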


\section{Determinantal Representations and Rayleigh Differences}\label{sec:DetRep}

Let $R$ be a unique factorization domain and denote by ${\rm Mat}_n(R)$ the set of $n\times n$ matrices with entries in $R$.

\begin{thm}\label{thm:DeltaDetRep}
Let $f \in R[x_1,\hdots,x_n]$ be
multiaffine in the variables $x_1,\hdots,x_n$ with its coefficient of $x_1\cdots x_n$ equals one.
Then $f= \det({\rm diag}(x_1,\hdots,x_n) +A)$ for some $A\in {\rm Mat}_n(R)$ if and only if for every $i\neq j\in [n]$, the polynomials $\Delta_{ij}(f)$ factor as the product $g_{ij}\cdot g_{ji}$ where 
\begin{itemize}
\item[(a)] $g_{ij}\in  R[x_k : k\neq i,j]$ is multiaffine in $x_1,\hdots, x_n$ and  
\item[(b)] for every $k\in [n]\backslash \{i,j\}$, ${\rm res}_{x_k}(g_{ij}, f) = g_{ik}g_{kj}$. 
\end{itemize}
In this case, we can take $g_{ij}$ to be the $(i,j)$th entry of $({\rm diag}(x_1,\hdots,x_n) +A)^{\rm adj}$, with $M^{\rm adj}$ represents the adjugate matrix of $M$. 
%Moreover, for $R = \C$, we can take the matrices $A$ to be Hermitian if and only if $f\in \R[x_1, \hdots, x_n]$ and $g_{ij} = \overline{g_{ji}}$ for all $i,j \in [n]$. 
\end{thm}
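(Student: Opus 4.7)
The plan is to prove the two directions using classical identities for adjugates: the forward direction by direct computation from matrix identities, and the reverse direction by induction on $n$ with the adjugate structure as the guiding principle.

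For the forward direction, set $M=\diag(x_1,\ldots,x_n)+A$ and define $g_{ij}:=(-1)^{i+j}\det(M_{\hat j,\hat i})$, i.e.\ the $(i,j)$ entry of $\mathrm{adj}(M)$. Condition (a) is immediate: deleting row $j$ and column $i$ removes the diagonal positions carrying $x_i$ and $x_j$, while every other $x_k$ remains on the surviving diagonal exactly once, so $g_{ij}$ is multiaffine in $\{x_k:k\neq i,j\}$. The factorization $\Delta_{ij}(f)=g_{ij}g_{ji}$ is then the Desnanot--Jacobi (Lewis Carroll) identity applied to $M$, using that $\partial f/\partial x_i=\det(M_{\hat i,\hat i})$ and $\partial^2 f/\partial x_i\partial x_j=\det(M_{\hat i\hat j,\hat i\hat j})$ because $x_i$ appears only at the $(i,i)$ position of $M$; the two signs $(-1)^{i+j}$ from the adjugate entries combine to $+1$. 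For condition (b), I would expand $\mathrm{res}_{x_k}(g_{ij},f)$ as a $\Z$-linear combination of four minors of $M$ and apply a generalized Sylvester identity to recognize this combination as $\det(M_{\hat j,\hat k})\det(M_{\hat k,\hat i})$, which after sign bookkeeping equals $g_{ik}g_{kj}$.

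For the reverse direction, I would induct on $n$. The base case $n=2$ is a direct calculation: with $f=x_1x_2+ax_1+bx_2+c$ the Rayleigh difference $\Delta_{12}(f)=ab-c$ factors as $g_{12}g_{21}$, and the matrix with diagonal $(b,a)$ and off-diagonal entries chosen so that $A_{12}A_{21}=g_{12}g_{21}$ (with the appropriate sign) yields the required representation. For the inductive step, set $f':=\partial f/\partial x_n$, a multiaffine polynomial in $n-1$ variables with coefficient of $x_1\cdots x_{n-1}$ equal to one. Condition (b) with $k=n$ relates products $g_{in}g_{nj}$ to resultants of $g_{ij}$ with $f$, and I would use this to extract candidate polynomials $g'_{ij}$ for $f'$ (essentially the coefficients of $x_n$ in the $g_{ij}$) and verify that $f'$ still satisfies (a) and (b). The inductive hypothesis then yields $A'\in\Mat_{n-1}(R)$ with $f'=\det(\diag(x_1,\ldots,x_{n-1})+A')$, and I would border $A'$ to obtain $A\in\Mat_n(R)$ by reading the last row and column off the leading coefficients of $g_{in},g_{ni}$ (with sign $(-1)^{i+n+1}$) and reading $A_{nn}$ off a coefficient of $f$.

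The main obstacle is verifying that $\det(\diag(x_1,\ldots,x_n)+A)=f$ for this bordered matrix. By construction, both sides agree after applying $\partial/\partial x_n$, so their difference is independent of $x_n$; the remaining task is to show that $\det(\diag(x_1,\ldots,x_n)+A)|_{x_n=0}=f|_{x_n=0}$. Laplace expansion along the last row or column expresses the left side as a sum of products involving the new entries $A_{in},A_{nj}$ and minors of $A'$, and these are precisely the combinations controlled by condition (b) with $k=n$. The UFD hypothesis enters here to ensure that the factorizations $\Delta_{ij}(f)=g_{ij}g_{ji}$ determine the $g_{ij}$ up to units, forcing the bordered entries to be compatible with the predictions of (b) and thereby closing the equality. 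Condition (b) is doing the decisive work at this stage: without it one could in principle match every $\Delta_{ij}$ without matching $f$ itself.
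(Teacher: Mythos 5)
Your forward direction is essentially the paper's: define $g_{ij}$ from the adjugate entries, obtain $\Delta_{ij}(f)=g_{ij}g_{ji}$ from Desnanot--Jacobi, and get condition (b) from a further instance of the same Sylvester-type identity (the paper uses \eqref{eq:dodgson} with indices $k=k$, $\ell=j$ to get $g_{kk}g_{ij}-fq=g_{ik}g_{kj}$ and identifies the left side as ${\rm res}_{x_k}(g_{ij},f)$). That half is fine.

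For the reverse direction you propose a genuinely different route: induction on $n$ via $f'=\partial f/\partial x_n$ and a bordering construction, whereas the paper works globally. The paper assembles the $n\times n$ matrix $G=(g_{ij})$ with $g_{ii}=\partial f/\partial x_i$, shows every $2\times2$ minor of $G$ lies in $\langle f\rangle$ (condition (b) is exactly what is needed for the mixed minors $g_{11}g_{ij}-g_{i1}g_{1j}$), invokes \cite[Lemma 4.7]{PV13} to conclude $f^{k-1}$ divides all $k\times k$ minors, then sets $M=f^{2-n}G^{\rm adj}$, computes $\det M=f$, and reads off the $\diag(x_1,\dots,x_n)+A$ shape from degree counts. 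Reducible $f$ is handled via Lemma~\ref{lem:factorDelta}. Your preliminary steps are sound: $\Delta_{ij}(f')$ is indeed the $x_n^2$-coefficient of $\Delta_{ij}(f)$, so $g'_{ij}:=\partial g_{ij}/\partial x_n$ does give a factorization of $\Delta_{ij}(f')$, and taking the $x_n^2$-coefficient of ${\rm res}_{x_k}(g_{ij},f)=g_{ik}g_{kj}$ gives condition (b) for $f'$.

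However, the closing step of your induction is not actually carried out, and the argument you sketch for it contains an error. You assert that "the UFD hypothesis enters here to ensure that the factorizations $\Delta_{ij}(f)=g_{ij}g_{ji}$ determine the $g_{ij}$ up to units." This is false: in a UFD a polynomial with several irreducible factors admits many essentially distinct splittings into two factors, and Example~\ref{ex:CounterExFactoring} in the paper is precisely a case where $\Delta_{ij}(f)$ factors into multiaffine pieces in more than one way and only some of those choices are compatible with (b). So uniqueness of the $g_{ij}$ up to units is not available, and you cannot deduce that the data you border onto $A'$ is "forced." There is a second, more structural gap: the inductive hypothesis only tells you that $f'$ has \emph{some} determinantal representation $\diag(x_1,\dots,x_{n-1})+A'$; it does not tell you that the adjugate entries of that particular representation coincide (even up to units) with the specific $g'_{ij}=\partial g_{ij}/\partial x_n$ you extracted from $f$. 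Determinantal representations are themselves non-unique (diagonal conjugation gives a family), so the $A'$ produced by induction need not be compatible with the border you want to attach, and you have no mechanism to synchronize them. Without such a synchronization, the Schur-complement identity you would need, namely $A_{nn}f' - v^{T}(M')^{\rm adj}u = f|_{x_n=0}$, is not established. This is the step where the paper's global argument via $G^{\rm adj}$ and the divisibility lemma does real work, and your sketch does not replace it.
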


\begin{proof}[Proof of ($\Rightarrow$)]
This follows from a classical equality on the principal minors of an $n\times n$ matrix, used by 
Dodgson \cite{Dodgson} as a method for computing determinants.  
This is also known as the Desnanot-Jacobi identity or more generally as Sylvester's determinantal identity. 
For subsets $S, T\subset [n]$ of equal cardinality, 
let $M(S,T)$ denote the submatrix of $M$ obtained by \emph{dropping} rows $S$ and columns $T$ from $M$. 
Then for any $i\neq j\in [n]$, 
\begin{equation}\label{eq:dodgson} 
\det(M(i,k))\cdot \det(M(j,\ell))  - \det(M)\cdot \det(M(\{i,j\}, \{k,\ell\})) = \det(M(i,\ell))\cdot \det(M(j,k)).
\end{equation}
Note that for $M = {\rm diag}(x_1,\hdots,x_n) + A$ and any subset $S\subseteq [n]$, 
the principal minor $\det(M(S,S))$ equals the derivative of $f$ with respect to the variables in $S$,  
$\left(\prod_{i\in S}\frac{\partial}{\partial x_i}\right) f$. 
The equation above with $k=i$ and $\ell = j$ 
then gives that $\Delta_{ij}(f)$ equals $\det(M(i,j))\cdot \det(M(j,i))$. 

For every $i,j\in [n]$, let $g_{ij}$ denote $\det(M(i,j))$. Then $g_{ij}\in R[x_k: k\neq i,j]$ is multiaffine in $x_1, \hdots, x_n$ and $\Delta_{ij}(f) = g_{ij}g_{ji}$. 
Under an appropriate 
choice of indices, \eqref{eq:dodgson} gives
\[g_{kk} \cdot g_{ij} - f \cdot q =  g_{ik}\cdot g_{kj}  \ \ \text{ where } \ \ q =\det(M(\{i,k\}, \{j,k\})).\]
Note that $g_{kk} = \frac{\partial f}{\partial x_k}$ is the coefficient of $x_k$ in $f$ 
and $q$ is the coefficient of $x_k$ in $g_{ij}$. Therefore $g_{kk} \cdot g_{ij} - f \cdot q $ is 
the resultant of $g_{ij}$ and $f$ with respect to $x_k$. 
%Finally, we note that if $R = \C$ and the matrices $M_{n+1}, \hdots, M_m, M_0$ are Hermitian, then
%$f\in \R[x_1, \hdots, x_m]$ and  $g_{ij} = \overline{g_{ji}}$ for every $i,j$. 
\end{proof}

\begin{example}\label{ex:CounterExFactoring}
 For $n\geq 5$, one cannot remove condition (b) from Theorem~\ref{thm:DeltaDetRep}. 
Consider 
\begin{align*}
f  = \ & x_1 x_2 x_3 x_4 x_5 + x_1 x_2 x_3 x_4 + x_1 x_2 x_3 x_5 + x_1 x_2 x_4 x_5 + 
 x_1 x_3 x_4 x_5 + x_2 x_3 x_4 x_5\\
&  + x_1 x_2 x_4 +   x_1 x_2 x_5  + x_1 x_3 x_4 + 
 x_2 x_3 x_5 + x_3 x_4 x_5 .
\end{align*}
One can check that for every $i,j\in [5]$, $\Delta_{ij}(f)$ factors as the product of two multiaffine polynomials  
in $\Q[x_1, \hdots, x_5]$. For example, 
$\Delta_{12}(f) = -x_3 x_4 x_5 (x_4 x_5 -x_3 + x_4 + x_5 )$.
Since there in an irreducible factor involving all three variables, 
there is only one possible factorization of $\Delta_{12}(f)$ as the product of two multiaffine polynomials $g_{12}\cdot g_{21}$, up to scalar multiples and switching the factors, namely
 $g_{12} = -x_3 x_4 x_5$ and $g_{21} = x_4 x_5 -x_3 + x_4 + x_5$. 
 Taking the resultant of $g_{21}$ and $f$ with respect to $x_3$ gives 
 \[
{\rm Res}_{x_3}(g_{21}, f) = (x_1 x_5+x_1 + x_5 ) (x_2 x_4+x_2 + x_4) (x_4 x_5+x_4 + x_5 ). 
 \]
Each of the three quadratic factors are irreducible and so there is no way of writing 
this resultant as the product of \emph{two} multiaffine polynomials.  Therefore there is 
no choice of polynomials $g_{23}$ and $g_{31}$ satisfying the conditions in Theorem~\ref{thm:DeltaDetRep}. 
\end{example}

\begin{lem}\label{lem:factorDelta}
Let $f \in R[x_1, \hdots, x_n]$ be multiaffine in the variables $x_1,\ldots,x_n$ and its coefficient of $x_1\cdots x_n$ equals one.
If $f = g\cdot h$ for some $g,h\in R[x_1, \hdots, x_n]$, then $g$ and $h$ are multiaffine in 
disjoint subsets of the variables $x_1, \hdots, x_n$ and we can take their leading coefficients in these variables to be  one. 
Moreover, if the polynomials $\Delta_{ij}(f)$ have factorizations satisfying conditions $(a)$
and $(b)$ in Theorem~\ref{thm:DeltaDetRep}, then so do $\Delta_{ij}(g)$ and $\Delta_{ij}(h)$.
\end{lem}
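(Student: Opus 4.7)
The plan is to first establish the structural statement about $g$ and $h$ and then transfer the Rayleigh-difference factorizations. Multiaffinity of $f$ immediately forces the supports of $g$ and $h$ to be disjoint: if some $x_i$ appeared in both factors, then $f=gh$ would have degree at least $2$ in $x_i$. So I would write $g\in R[x_i:i\in I]$ and $h\in R[x_j:j\in J]$ with $I\sqcup J=[n]$. Comparing the coefficient of $x_1\cdots x_n$ on both sides of $f=gh$ gives $c_g\cdot c_h=1$, where $c_g,c_h$ are the leading coefficients of $g,h$ in $\prod_{i\in I}x_i$ and $\prod_{j\in J}x_j$, so each is a unit in $R$; replacing $g$ by $g/c_g$ and $h$ by $c_g\,h$ then normalizes both to have leading coefficient one.

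Next I would compute $\Delta_{ij}(f)$ in terms of $g$ and $h$. Since $h$ is constant in the $I$-variables and $g$ in the $J$-variables, a direct differentiation yields
\[
\Delta_{ij}(f)=\begin{cases}h^2\,\Delta_{ij}(g)&\text{if }i,j\in I,\\ g^2\,\Delta_{ij}(h)&\text{if }i,j\in J,\\ 0&\text{otherwise.}\end{cases}
\]
By symmetry it suffices to handle the case $i,j\in I$. The hypothesis reads $g_{ij}g_{ji}=h^2\,\Delta_{ij}(g)$ where $g_{ij},g_{ji}$ are multiaffine in $x_1,\dots,x_n$ and independent of $x_i,x_j$, and the goal is to extract from this a factorization of $\Delta_{ij}(g)$ satisfying (a) and (b) for the polynomial $g$.

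The main step is a unique factorization argument in the UFD $R[{\bf x}]$. After the normalization, $h$ has leading coefficient one and therefore no irreducible constant factor; every irreducible factor $p$ of $h$ is a non-constant polynomial involving some variable in $J$, and hence cannot divide $\Delta_{ij}(g)\in R[x_k:k\in I]$. Because $h$ is multiaffine, each such $p$ appears in $h^2$ with multiplicity exactly two, while multiaffinity of $g_{ij}$ and $g_{ji}$ forces $p$ to divide each of them at most once. The only compatible distribution is that $p$ divides $g_{ij}$ and $g_{ji}$ exactly once each, so $h\mid g_{ij}$ and $h\mid g_{ji}$. Writing $g_{ij}=h\,\tilde g_{ij}$ and $g_{ji}=h\,\tilde g_{ji}$ gives $\Delta_{ij}(g)=\tilde g_{ij}\tilde g_{ji}$ with $\tilde g_{ij},\tilde g_{ji}$ multiaffine; an additivity-of-degree argument (the product has degree $0$ in every variable outside $I\setminus\{i,j\}$, each factor has degree $\leq 1$, and the sum equals $0$) shows $\tilde g_{ij},\tilde g_{ji}\in R[x_k:k\in I\setminus\{i,j\}]$, giving (a).

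For (b), fix $k\in I\setminus\{i,j\}$. Since $h$ is constant in $x_k$, the definition of ${\rm res}_{x_k}$ gives
\[
{\rm res}_{x_k}(g_{ij},f)={\rm res}_{x_k}(h\tilde g_{ij},gh)=h^2\,{\rm res}_{x_k}(\tilde g_{ij},g),
\]
and comparing with the hypothesis ${\rm res}_{x_k}(g_{ij},f)=g_{ik}g_{kj}=h^2\,\tilde g_{ik}\tilde g_{kj}$ yields ${\rm res}_{x_k}(\tilde g_{ij},g)=\tilde g_{ik}\tilde g_{kj}$, which is exactly (b) for $g$. The argument for $\Delta_{ij}(h)$ with $i,j\in J$ is symmetric. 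I expect the main obstacle to be the unique factorization step extracting $h$ from both $g_{ij}$ and $g_{ji}$ simultaneously; both the leading-coefficient normalization and the multiaffinity of $g_{ij},g_{ji}$ are essential, and one must be careful that the normalization actually rules out irreducible constant factors of $h$ that could otherwise hide inside $\Delta_{ij}(g)$.
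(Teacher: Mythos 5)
Your proof is correct and follows essentially the same route as the paper's: split $[n]$ into disjoint supports $I\sqcup J$ by degree additivity, normalize leading coefficients to $1$, compute $\Delta_{ij}(f)\in\{h^2\Delta_{ij}(g),\,g^2\Delta_{ij}(h),\,0\}$ according to where $i,j$ lie, then divide $h$ out of the given factorization $g_{ij}g_{ji}=h^2\Delta_{ij}(g)$ and out of the resultant identity. The only real difference is one of exposition: the paper simply asserts ``since $m_{ij},m_{ji}$ are multiaffine, they both must be divisible by $h$,'' whereas you spell out the underlying UFD argument — each irreducible factor $p$ of $h$ is non-constant and lives in $R[x_j:j\in J]$ (this is where the leading-coefficient normalization matters), hence cannot divide $\Delta_{ij}(g)\in R[x_k:k\in I]$; $h$ is squarefree by multiaffinity, so $p^2\Vert h^2\Delta_{ij}(g)$; and multiaffinity of $g_{ij},g_{ji}$ caps the multiplicity of $p$ in each at one, forcing $p\mid g_{ij}$ and $p\mid g_{ji}$. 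That added detail is a genuine clarification of a step the paper treats as obvious, and it is the sort of thing worth recording.
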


\begin{proof}
For any $i\in [n]$, the degree of $f$ in $x_i$  must be the sum of the degrees of $g$ and $h$ in $x_i$. Since this sum of nonnegative numbers is one for each $i\in [n]$, 
we see that for some subset $I\subseteq[n]$, 
$g$ is multiaffine in $\{x_i : i\in I\}$, $h$ is multiaffine in $\{x_j : j\in [n]\backslash I\}$,
and $\deg_i(h) = \deg_j(g) = 0$ for any $i\in I$ and $j\not\in I$. 

The highest degree term in $f$ with respect to the variables $x_1, \hdots, x_n$, 
$\prod_{i=1}^nx_i$, is the product of the highest degree terms in $g$ and $h$. 
Therefore after rescaling, we can assume that both $g$ and $h$ have leading coefficient 
in these variables equal to $1$. 
For $i\in I$ and $j \notin I$, $\partial(g\cdot h)/\partial x_i = h\cdot \partial g/\partial x_i$ 
and $\partial (g\cdot h)/\partial x_j = g\cdot \partial h/\partial x_j$. 
From this, one can check that 
$\Delta_{ij}(gh)$ equals  $h^2\Delta_{ij}(g)$ for $i,j \in I$, 
$g^2\Delta_{ij}(h) $ for $i,j\in [n]\backslash I$ and zero otherwise. 

Suppose that for $i,j\in [n]$, $\Delta_{ij}(f) = m_{ij}m_{ji}$ with $m_{ij}$ multiaffine in $x_1, \hdots, x_n$ and 
${\rm res}_{x_k}(m_{ij}, f) = m_{ik}m_{kj}$ for every $i,j,k$. 
For $i,j\in I$, we see that $m_{ij}m_{ji} = h^2\Delta_{ij}(g)$.  Since $m_{ij}, m_{ji}$ are multiaffine, 
they both must be divisible by $h$, leaving $\tilde{m}_{ij}\tilde{m}_{ji} = \Delta_{ij}(g)$, where 
$\tilde{m}_{ij}, \tilde{m}_{ji}$ are multiaffine in $x_i$ for $i\in I$. 
Moreover, for $k$ also in $I$, 
\[
h^2 \tilde{m}_{ik}\tilde{m}_{kj} = 
m_{ik}m_{kj} = {\rm Res}_{x_k}(m_{ij}, f) = {\rm res}_{x_k}(\tilde{m}_{ij}h, gh) =  h^2{\rm res}_{x_k}(\tilde{m}_{ij}, g)
\]
showing that $\tilde{m}_{ik}\tilde{m}_{kj} = {\rm res}_{x_k}(\tilde{m}_{ij}, g)$.
The desired factorization for $\Delta_{ij}(h)$ with $i,j\in [n]\backslash I$ follows similarly. 
\end{proof}

\begin{proof}[Proof of ($\Leftarrow$)] 
Suppose that $f$ is irreducible and homogeneous of degree $n$. 
Let $G$ denote the $n\times n$ matrix with  $(i,j)$th entry $g_{ij}$ for $i\neq j$ and $g_{ii}:=\frac{\partial f}{\partial x_i}$ for $i=j$. 

We claim that all of the $2\times 2$ minors of $G$ lie in $\langle f \rangle$. 
This is immediate for the symmetric minors, as $g_{ii}g_{jj} - g_{ij}g_{ji} = f \cdot \frac{\partial^2 f}{\partial x_i \partial x_j}$.
Moreover, since $\frac{\partial f}{\partial x_1}$ is the coefficient of $x_1$ in $f$, 
the resultant ${\rm res}_{x_1}(g_{ij},f)$ has the form $\frac{\partial f}{\partial x_1} g_{ij} - q f$ for some $q$. 
This gives $g_{11}g_{ij} - g_{i1}g_{1j} = qf$. Finally, suppose that $i,j,k,\ell$ are all distinct. 
Then 
	\[
	g_{11}^2 (g_{ij}g_{kl} - g_{il}g_{kj})  
	= 
	(g_{11} g_{ij})(g_{11} g_{kl}) - (g_{11}g_{il})(g_{11}g_{kj})
	\equiv 
	g_{1i}g_{1j}g_{1k}g_{1l} -  g_{1i}g_{1l}g_{1k}g_{1j}
	= 
	0	\!\! \mod \langle f \rangle.
	\]
Since $f$ is irreducible and $g_{11} = \partial f /\partial x_1$ has smaller degree, $g_{11}$ is not a zero-divisor in $R[x_1, \hdots, x_n]/\langle f\rangle$. Therefore the minor $g_{ij}g_{kl} - g_{il}g_{kj}$ belongs to $\langle f \rangle$. 

From this it follows that $f^{k-1}$ divides the $k\times k$ minors of $G$ 
for every $2\leq k\leq n$, see \cite[Lemma 4.7]{PV13}. 
In particular, $f^{n-2}$ divides the entries of the adjugate matrix $G^{\rm adj}$.  
Let 
\begin{equation}\label{eq:Gadj}
M = (1/f^{n-2})\cdot G^{\rm adj}.
\end{equation}
Also $f^{n-1}$ divides $\det(G)$, and since these both have degree $n(n-1)$, 
there must be some constant $\lambda\in R$ for which $\det(G) = \lambda \cdot f^{n-1}$. 

We can see that $\lambda = 1$ by taking top degree terms. 
Since $\deg(f_i) = n-1$ and $\deg(g_{ij})\leq n-2$ for all $i\neq j$, the leading degree term of $\det(G)$ 	comes uniquely from the product of the diagonals $f_1\cdots f_n$ and is therefore $(\prod_{i=1}^n x_i)^{n-1}$. 
On the righthand side, the leading degree term of $f^{n-1}$ is also  
$(\prod_{i=1}^n x_i)^{n-1}$, showing that $\lambda=1$. 
Then 
	\[\det(M) \ \ = \ \ \frac{1}{f^{n(n-2)}}\cdot \det(G^{\rm adj})\ \ =\ \ 
	\frac{1}{f^{n(n-2)}} \det(G)^{n-1}\ \ =\ \   \frac{1}{f^{n(n-2)}} f^{(n-1)^2}\ \ =\ \  f.\]
	
Note that the entries of $M$ have degree $\leq (n-1)^2 - n(n-2) = 1$, so we can write 
	$M$ as $M_0+ \sum_{i=1}^n x_i M_i$ for some matrices $M_i\in R^{n\times n}$. 
	We claim that $\sum_{i=1}^n x_i M_i = \diag(x_1, \hdots, x_n)$. 
	
To see this, first note that a non-principal $(n-1)\times (n-1)$ minor of $G$ 
involves at most $n-2$ elements from the diagonal of $G$ 
and therefore has degree $\leq (n-2)(n-1) + (n-2) = n(n-2)$, since the 
off-diagonal entries of $G$ have degree $\leq n-2$. 
Therefore the off diagonal entries of $M$ have degree $\leq n(n-2) - n(n-2) =0$. 
	
Moreover in the expansion of any principal minor of $G$, there is a \emph{unique} term of degree $(n-1)^2$, namely the product of the leading terms of the diagonal elements, $\prod_{j\neq i} {\rm LT}(g_{jj})$. 
We can therefore take the leading terms \eqref{eq:Gadj} to find that
	\begin{align*}
		\sum_{i=1}^nx_iM_i  & = \frac{1}{\left({\rm LT}(f)\right)^{n-2}}\cdot \left({\rm diag}\left({\rm LT}(g_{11}), \hdots, {\rm LT}(g_{nn})\right)\right)^{\rm adj}\\
		& = \frac{1}{\left(\prod_{j=1}^nx_j\right)^{n-2}}\cdot \left(\prod_{j=1}^nx_j \cdot {\rm diag}\left(\frac{1}{x_1}, \hdots, \frac{1}{x_n}\right)\right)^{\rm adj}\\
%		& = \frac{\left(\prod_{j=1}^nx_j\right)^{n-1}}{\left(\prod_{j=1}^nx_j\right)^{n-2}}\cdot \left({\rm diag}\left(\frac{1}{x_1}, \hdots, \frac{1}{x_n}\right)\right)^{\rm adj}\\
%		& = \prod_{j=1}^n x_j \cdot {\rm diag}\left(\prod_{j\neq 1}\frac{1}{x_j}, \hdots,\prod_{j\neq n}\frac{1}{x_j}\right)\\
		& = {\rm diag}\left(x_1, \hdots, x_n\right).
	\end{align*}

Finally, for general $f$, we take a factorization of $f$ into irreducible polynomials $f = \prod_{\ell} f_{\ell}$. 
By Lemma~\ref{lem:factorDelta}, for every $i,j$, 
$\Delta_{ij}(f_{\ell})$ has a factorization $m_{ij}m_{ji}$ so into multiaffine polynomials $m_{ij}$ 
with ${\rm Res}_{x_k}(m_{ij},f) = m_{ik}m_{kj}$. By the arguments above, 
 $f_k$ has a determinantal representation of the correct form. 
Taking a block diagonal representation of these representations (and permuting the rows and columns if necessary to reorder $x_1, \hdots, x_n$) gives a determinantal representation for $f$. 
\end{proof}

\begin{remark}
Theorem~\ref{thm:DeltaDetRep} the matrix $G = (g_{ij})_{ij}$
and corresponding determinantal representation ${\rm diag}(x_1, \hdots, x_n)+A$ 
of $f$ satisfy 
\[
G  = ({\rm diag}(x_1, \hdots, x_n)+A)^{\rm adj} \text{ and } 
({\rm diag}(x_1, \hdots, x_n)+A) = f^{2-n}G^{\rm adj}.\]
\end{remark}

\begin{cor}\label{cor:Invariance}
Let $f = \det({\rm diag}(x_1,\hdots,x_n) +A)$ with $A\in {\rm Mat}_n(R)$
and $\gamma\in {\rm SL}_2(R)^n$. 
If $\beta = {\rm coeff}(\gamma\cdot f, \prod_{i=1}^nx_i)$ is nonzero, 
then for some $n\times n$ matrix $B$ with entries in $\tfrac{1}{\beta}R$, 
\[\gamma\cdot f = \beta \det({\rm diag}(x_1,\hdots,x_n) +B).\] 
\end{cor}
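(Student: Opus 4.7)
The plan is to apply Theorem~\ref{thm:DeltaDetRep} to the rescaled polynomial $\tilde f := \tfrac{1}{\beta}\,\gamma\cdot f$, viewed over the localization $R' = R[\tfrac{1}{\beta}]$, which is again a UFD. By construction the coefficient of $x_1\cdots x_n$ in $\tilde f$ is $1$, so the normalization hypothesis of the theorem is satisfied; once a determinantal representation of $\tilde f$ is in hand, multiplying through by $\beta$ yields the corollary.

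First I would invoke the forward direction of Theorem~\ref{thm:DeltaDetRep} to fix multiaffine cofactors $g_{ij}\in R[x_k:k\neq i,j]$ with $\Delta_{ij}(f)=g_{ij}g_{ji}$ and $\mathrm{res}_{x_k}(g_{ij},f)=g_{ik}g_{kj}$, and then take as candidate cofactors for $\tilde f$ the polynomials $h_{ij}:=\tfrac{1}{\beta}\,\gamma\cdot g_{ij}$. These inherit property (a) of the theorem since $g_{ij}$ has degree zero in $x_i$ and $x_j$, so the $i$-th and $j$-th components of $\gamma$ act trivially on it. Corollary~\ref{cor:SL2onDelta}, together with the multiplicativity of the $\SL_2(R)^n$-action on polynomials of fixed multidegree, gives
\[
\Delta_{ij}(\tilde f) \;=\; \tfrac{1}{\beta^{2}}\,\Delta_{ij}(\gamma\cdot f) \;=\; \tfrac{1}{\beta^{2}}\,\gamma\cdot(g_{ij}g_{ji}) \;=\; h_{ij}h_{ji},
\]
while Proposition~\ref{prop:SL2_on_res} combined with the bilinearity of $\mathrm{res}_{x_k}$ yields $\mathrm{res}_{x_k}(h_{ij},\tilde f)=h_{ik}h_{kj}$. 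Theorem~\ref{thm:DeltaDetRep} applied over $R'$ then produces a matrix $B\in\Mat_n(R')$ with $\tilde f = \det(\diag(x_1,\ldots,x_n)+B)$.

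The main subtlety, and the step I expect to require the most care, is to verify the stronger claim that $B$ has entries in $\tfrac{1}{\beta}R$ rather than only in the localization $R'$. For this I would use the explicit formula $B+\diag(x_1,\ldots,x_n) = \tilde f^{\,2-n}G^{\mathrm{adj}}$ from the proof of Theorem~\ref{thm:DeltaDetRep}, where $G$ has $\partial\tilde f/\partial x_i$ on the diagonal and $h_{ij}$ off of it. Writing $G=\tfrac{1}{\beta}G'$ with $G'$ the corresponding matrix built from $\gamma\cdot f$ (entries in $R[\mathbf{x}]$) and using $\tilde f^{\,2-n}=\beta^{n-2}(\gamma\cdot f)^{2-n}$, the powers of $\beta$ collapse to
\[
B+\diag(x_1,\ldots,x_n) \;=\; \tfrac{1}{\beta}\,(\gamma\cdot f)^{2-n}\,(G')^{\mathrm{adj}}.
\]
The divisibility argument in the proof of Theorem~\ref{thm:DeltaDetRep} shows that $(\gamma\cdot f)^{n-2}$ divides every entry of $(G')^{\mathrm{adj}}$, forcing the right-hand side into $\tfrac{1}{\beta}R[\mathbf{x}]$; subtracting the linear part then leaves a constant matrix $B$ with entries in $\tfrac{1}{\beta}R$, as required.
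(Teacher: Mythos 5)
Your proposal is correct and follows the same route as the paper's own proof: verify the hypotheses of Theorem~\ref{thm:DeltaDetRep} for $\tfrac{1}{\beta}\gamma\cdot f$ with cofactors $\tfrac{1}{\beta}\gamma\cdot g_{ij}$ (using Corollary~\ref{cor:SL2onDelta} and Proposition~\ref{prop:SL2_on_res}), apply the theorem over $R[\tfrac{1}{\beta}]$, and then use the explicit formula $M' = \tilde f^{2-n}G^{\mathrm{adj}}$ together with the $\beta^{n-2}\cdot\beta^{-(n-1)}=\beta^{-1}$ cancellation to land in $\tfrac{1}{\beta}R[\mathbf{x}]$. The paper states the final inclusion $(\gamma\cdot f)^{2-n}(\gamma\cdot G)^{\mathrm{adj}}\in\Mat_n(R[\mathbf{x}])$ somewhat tersely, while you make the underlying divisibility claim (that $(\gamma\cdot f)^{n-2}$ divides the entries of $(G')^{\mathrm{adj}}$ over $R[\mathbf{x}]$) more explicit; this is a fair reading, though a fully airtight argument would pause to check that the $2\times 2$-minor computation from the proof of Theorem~\ref{thm:DeltaDetRep} still goes through for the non-normalized polynomial $\gamma\cdot f$ and the matrix $G'$ with diagonal $\partial(\gamma\cdot f)/\partial x_i$ — a point the paper also leaves implicit.
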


\begin{proof}
Let $g_{ij} \in R[{\bf x}]$ denote the $(i,j)$th entry of $({\rm diag}(x_1,\hdots,x_n) +A)^{\rm adj}$. 
We claim that $\frac{1}{\beta}\gamma\cdot f$ and  $\frac{1}{\beta}\gamma\cdot g_{ij}$ in $R(\tfrac{1}{\beta})[{\bf x}]$ satisfy the 
conditions in Theorem~\ref{thm:DeltaDetRep}. 
Here $\gamma$ acts of $f$ as a polynomial of multidegree ${\bf 1}_{[n]}$ 
and on $g_{ij}$ as a polynomial of multidegree ${\bf 1}_{[n]\backslash \{i,j\}}$. 

It is immediate that $\frac{1}{\beta}(\gamma \cdot f)\in R(\tfrac{1}{\beta})[{\bf x}]$ is multiaffine in $x_1, \hdots, x_n$ and has coefficient of $x_1\cdots x_n$ equal to one. 
We first note that 
\[
\Delta_{ij}(\tfrac{1}{\beta}(\gamma \cdot f)) 
= 
\tfrac{1}{\beta^2} (\gamma\cdot \Delta_{ij}(f)) = (\tfrac{1}{\beta} \gamma\cdot g_{ij})(\tfrac{1}{\beta} \gamma\cdot g_{ji})
\]
where $\gamma$ acts on $\Delta_{ij}(f)$ as a polynomial of multidegree $2\cdot {\bf 1}_{[n]\backslash \{i,j\}}$. 
By Proposition~\ref{prop:SL2_on_res},
\[
{\rm res}_{x_k}\left(\tfrac{1}{\beta}(\gamma \cdot g_{ij}), \tfrac{1}{\beta}(\gamma \cdot f) \right) 
= 
\tfrac{1}{\beta^2} (\gamma\cdot {\rm res}_{x_k}(g_{ij},f)) 
= \left(\tfrac{1}{\beta}\gamma \cdot g_{ik}\right)\left(\tfrac{1}{\beta}\gamma \cdot g_{kj}\right). 
\]
As the polynomials $\tfrac{1}{\beta}\gamma \cdot g_{ij}$ are multiaffine, this finishes the claim. 

By Theorem~\ref{thm:DeltaDetRep}, $\frac{1}{\beta}\gamma\cdot f$ 
equals $\det({\rm diag}(x_1,\hdots,x_n) +B)$ 
for some $B\in {\rm Mat}_n(R(\tfrac{1}{\beta}))$. 

We claim that $\beta B$ has entries in $R$.  By construction we have 
\[
{\rm diag}(x_1, \hdots, x_n)+B = (\tfrac{1}{\beta} \gamma \cdot f)^{2-n}(\tfrac{1}{\beta} \gamma \cdot G)^{\rm adj}
= 
\tfrac{1}{\beta}
(\gamma \cdot f)^{2-n}(\gamma \cdot G)^{\rm adj}.
\]
For the last equality, we use that $(\tfrac{1}{\beta})^{2-n}(\tfrac{1}{\beta})^{n-1} = \frac{1}{\beta}$. 
Multiplying by $\beta$ then gives 
\begin{equation}\label{eq:MatrixAction}
\beta \left({\rm diag}(x_1, \hdots, x_n)+B\right)  = 
(\gamma \cdot f)^{2-n}(\gamma \cdot G)^{\rm adj} \in {\rm Mat}_n(R[{\bf x}]),
\end{equation}
showing that the entries of $\beta B$ belong to $R$. 
\end{proof}

From this, we see that $\SL_2(\F)^n$ acts rationally on the set of matrices 
$A\in {\rm Mat}_n(\F)$  for $\F = {\rm frac}(R)$. 
Namely, if $f = {\rm diag}(x_1, \hdots, x_n)+A$ and 
$\gamma\in \SL_2(\F)^n$ with ${\rm coeff}(\gamma \cdot f, \prod_{i=1}^nx_i) = \beta \neq 0$, 
then as \eqref{eq:MatrixAction} in the proof of Corollary~\ref{cor:Invariance}, 
$\beta \left({\rm diag}(x_1, \hdots, x_n)+B\right)  = 
(\gamma \cdot f)^{2-n}(\gamma \cdot G)^{\rm adj}$ 
for some $B\in {\rm Mat}_n(\F)$. 
We can then define $\gamma\cdot A=B$.

Similarly, for a field $\F$, the mutiplicative group $(\F^*)^n$ acts on $n\times n$ matrices 
by diagonal conjugation. Namely, for $\lambda = (\lambda_1, \hdots, \lambda_n)$ 
we define 
\[
\lambda \cdot A := D^{-1}A D,
\]
where $D = {\rm diag} (\lambda_1, \hdots, \lambda_n)$.

\begin{prop}
The  action of ${\rm SL}_2(\F)^n$ on ${\rm Mat}_n(\F)$ commutes with diagonal conjugation.
\end{prop}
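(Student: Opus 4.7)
The plan is to trace both actions through the explicit matrix formula \eqref{eq:MatrixAction} from the proof of Corollary~\ref{cor:Invariance}. Fix $A\in\Mat_n(\F)$ and $\gamma\in\SL_2(\F)^n$ with $\beta={\rm coeff}(\gamma\cdot f_A,\prod x_i)\neq 0$, where $f_A=\det(\diag(x_1,\ldots,x_n)+A)$ and $G_A=(\diag(x_1,\ldots,x_n)+A)^{\rm adj}$. Let $D=\diag(\lambda_1,\ldots,\lambda_n)$ with $\lambda\in(\F^*)^n$. The goal is to show $\gamma\cdot(\lambda\cdot A)=\lambda\cdot(\gamma\cdot A)$.

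The first step is to check that diagonal conjugation is essentially invisible before $\gamma$ is applied. Since $D$ commutes with $\diag(x_1,\ldots,x_n)$, one has
\[
\diag(x_1,\ldots,x_n)+D^{-1}AD \;=\; D^{-1}\bigl(\diag(x_1,\ldots,x_n)+A\bigr)D.
\]
Taking determinants gives $f_{\lambda\cdot A}=f_A$, so in particular $\gamma\cdot f_{\lambda\cdot A}=\gamma\cdot f_A$ and the scalar $\beta$ is the same for both $A$ and $\lambda\cdot A$.

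The second step is to track the cofactor matrix. Using the standard identity $\adj(PM Q)=\adj(Q)\adj(M)\adj(P)$ together with $\adj(D)=\det(D)D^{-1}$ (valid since $D$ is invertible), the displayed equation above yields
\[
G_{\lambda\cdot A} \;=\; D^{-1}G_A D.
\]
Because the entries of $D$ are constants (not involving the $x_i$), the entrywise $\SL_2(\F)^n$-action commutes with conjugation by $D$: $\gamma\cdot(D^{-1}G_A D)=D^{-1}(\gamma\cdot G_A)D$. Likewise $\adj(D^{-1}N D)=D^{-1}\adj(N)D$ for any matrix $N$ with polynomial entries.

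The final step is to apply \eqref{eq:MatrixAction} to $\lambda\cdot A$ and unwind using the two observations above:
\[
\beta\bigl(\diag(x_1,\ldots,x_n)+\gamma\cdot(\lambda\cdot A)\bigr)
\;=\;(\gamma\cdot f_A)^{2-n}\bigl(\gamma\cdot G_{\lambda\cdot A}\bigr)^{\rm adj}
\;=\;(\gamma\cdot f_A)^{2-n}\,D^{-1}(\gamma\cdot G_A)^{\rm adj}D,
\]
which equals $D^{-1}\bigl(\beta(\diag(x_1,\ldots,x_n)+\gamma\cdot A)\bigr)D=\beta\bigl(\diag(x_1,\ldots,x_n)+D^{-1}(\gamma\cdot A)D\bigr)$, again because $D$ commutes with $\diag(x_1,\ldots,x_n)$. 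Matching off-diagonal parts gives $\gamma\cdot(\lambda\cdot A)=D^{-1}(\gamma\cdot A)D=\lambda\cdot(\gamma\cdot A)$, as required. There is no genuine obstacle here: the proof reduces to the twin facts that $D$ commutes with $\diag(x_1,\ldots,x_n)$ (so diagonal conjugation preserves $f_A$) and that $D$ has constant entries (so it commutes with the entrywise $\SL_2$-action); the only place to be careful is verifying that the adjugate interacts with conjugation as expected.
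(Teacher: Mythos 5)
Your proof is correct and takes essentially the same route as the paper: both show that diagonal conjugation commutes with passing to the adjugate matrix $G$ and with the entrywise $\SL_2(\F)^n$-action (since $D$ has constant entries and commutes with $\diag(x_1,\ldots,x_n)$), and then push both actions through the formula \eqref{eq:MatrixAction}. You simply spell out a few steps the paper leaves implicit (the adjugate-conjugation identity and the equality of the scalars $\beta$).
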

\begin{proof}
Let $A\in {\rm Mat}_n(\F)$ with $f = \det({\rm diag}(x_1, \hdots, x_n) + A)$ and $\gamma\in {\rm SL}_2(\F)^n$ for which 
${\rm coeff}(\gamma \cdot f, \prod_{i=1}^nx_i) = \beta\neq 0$. 
Let $G= (g_{ij})_{ij} = ({\rm diag}(x_1, \hdots, x_n) + A)^{\rm adj}$. 

For $\lambda \in (\F^*)^n$ and $D = {\rm diag}(\lambda_1, \hdots, \lambda_n)$,  
we see that $\frac{\lambda_j}{\lambda_i}(\gamma\cdot g_{ij})= \gamma\cdot(\frac{\lambda_j}{\lambda_i}g_{ij})$ and so 
$\gamma \cdot (D^{-1}GD) = D^{-1}(\gamma \cdot G)D$. 
Then 
\begin{align*}
{\rm diag}(x_1, \hdots, x_n)+D^{-1}(\gamma\cdot A) D
&= 
\alpha
(\gamma \cdot f)^{2-n}D^{-1}(\gamma \cdot G)^{\rm adj}D\\
&= 
\alpha
(\gamma \cdot f)^{2-n}(\gamma \cdot (D^{-1}GD))^{\rm adj}\\
&= 
{\rm diag}(x_1, \hdots, x_n)+\gamma\cdot (D^{-1}A D).
\end{align*}
\end{proof}

%%%%%%%%%%%%%%%%%%%%%%%%%%%%%%%%%%%%%%%%%%%%%%%%%%%%%%%%%%%%%%%%%%%%%%%%%%%%%%%%%%%%%
%%%%%%%%%%%%%%%%%%%%%%%%%%%%%%%%%%%%%%%%%%%%%%%%%%%%%%%%%%%%%%%%%%%%%%%%%%%%%%%%%%%%%

\section{Multiaffine algebra for constructing Hermitian factorizations }\label{sec:MA_algebra}
In this section, we develop an algorithm for constructing factorizations that satisfy the conditions 
in Theorem~\ref{thm:DeltaDetRep}. To do this, we find it most convenient to work in the following 
level of generality throughout this section. 
Let $S$ be a unique factorization domain with an automorphic involution $a\mapsto \overline{a}$. 
We use $0$ and $1$ to denote the additive and multiplicative identities of $S$. 
The map $S\to S$ given by $a\mapsto \overline{a}$ then must satisfy
\[
\overline{(\overline{a})} = a, \ \overline{0}=0, \ \overline{1}=1, \ \overline{a+ b} =\overline{a}+ \overline{b} 
\text{ and }  \overline{a\cdot b} =\overline{a}\cdot \overline{b}.
\]
for all $a,b\in S$. Let $R$ be the subring of elements fixed by this automorphism, that is $R = \{a\in S: \overline{a}=a\}$.

The example of interest is the ring $S = \C[x_{n+1}, \hdots, x_m]$ of polynomials with complex coefficients
with the involution given by complex conjugation. In this case the fixed ring is the subring of 
polynomials whose coefficients are real, $R = \R[x_{n+1}, \hdots, x_m]$.

\begin{assumptions}\label{assumptions}:
Let $f\in R[x_1, \hdots, x_n]$ satisfy the following: 
	\begin{enumerate}
		\item $f$ is irreducible in $R[x_1, \hdots, x_n]$,
		\item $f$ has degree $\leq 1$ in each variable $x_1, \hdots, x_n$, 
		\item the coefficient $\prod_{i=1}^n x_i$ in $f$ is nonzero, 
		\item for every $1\leq i<j\leq n$, $\Delta_{ij}(f)$ factors as $g_{ij} \overline{g_{ij}}$ in $S[x_1, \hdots, x_n]$, and 
		\item for every $1\leq i\leq n$, the partial derivative $\frac{\partial f}{\partial x_i}$ is irreducible in $R[x_1, \hdots, x_n]$ up to a constant. That is, for any factorization $\frac{\partial f}{\partial x_i} = g\cdot h$ in $R[x_1, \hdots, x_n]$,  $g\in R$ or $h\in R$.	\end{enumerate}
\end{assumptions}
In what follows, we will build up tools to show that under these assumptions 
Algorithm~\ref{alg:HerMatConst} produces the desired representation of $f$. 
We first exploit some properties of multiaffine polynomials.
For any disjoint subsets $S, T\subset [n]$, let 
\[f_S^T=\prod_{i\in S}\partial_i \cdot f |_{\{x_j = 0 \ : \ j\in T\}}.\] 
Note that if $f$ is multiaffine in $x_1, \hdots, x_n$, then for any $1\leq i < j \leq n$, we have 
\[
f  = x_i f_{i}  + f^{i}, \ \  f_i  = x_j f_{ij} +  f_i^j, \ \text{ and } \ f^i = x_j f^i_j +   f^{ij}.\]
From this, one can check that the formula for $\Delta_{ij}f$ can be written without  $x_i, x_j$: 
\[
\Delta_{ij}f = f_i^j\cdot  f_j^i -  f^{ij} \cdot f_{ij}
\]
If in addition we assume that $f$ and all its 
partial derivatives are irreducible, then $\Delta_{ij}(f)$ will have degree exactly $2$ in each variable,
as the following lemma shows.

\begin{lem}\label{lem:quad}
	If $f$ satisfies Assumptions~\ref{assumptions}, then for all $1\leq i,j \leq n$,  $\Delta_{ij}(f)$ is quadratic in each variable $x_k$ for $k \in [n]\setminus\{i,j\}$.
\end{lem}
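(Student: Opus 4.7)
The plan is to compute the coefficient of $x_k^2$ in $\Delta_{ij}(f)$ explicitly and argue that it is nonzero under Assumptions~\ref{assumptions}. The upper bound $\deg_{x_k}\Delta_{ij}(f) \leq 2$ is automatic since $f$ is multiaffine and $\Delta_{ij}(f)$ is a difference of products of two polynomials each of degree $\leq 1$ in $x_k$; the content of the lemma lies entirely in the lower bound, which I would establish by identifying this leading coefficient.

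First I would write $f = x_k f_k + f^k$ with $f_k = \partial f/\partial x_k$ and $f^k = f|_{x_k=0}$, both in $R[x_\ell : \ell \neq k]$, and substitute into $\Delta_{ij}(f) = (\partial_i f)(\partial_j f) - f \cdot \partial_i\partial_j f$. Collecting terms by degree in $x_k$, a short computation shows that the coefficient of $x_k^2$ in $\Delta_{ij}(f)$ is exactly $\Delta_{ij}(f_k)$ (the cross-terms cancel pairwise only in the linear coefficient, but the pure $x_k^2$ coefficient reads off directly). The lemma therefore reduces to verifying $\Delta_{ij}(f_k) \neq 0$ whenever $k \notin \{i,j\}$.

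The main step is to rule out $\Delta_{ij}(f_k) = 0$ by combining Assumptions (3) and (5). I would first use Assumption (3): since the coefficient of $\prod_{\ell=1}^n x_\ell$ in $f$ is nonzero, both $\partial_k \partial_i f$ and $\partial_k \partial_j f$ still contain the monomials $\prod_{\ell \neq i,k} x_\ell$ and $\prod_{\ell \neq j,k} x_\ell$ respectively with nonzero coefficient, so $f_k$ has degree exactly one in each of $x_i$ and $x_j$. Proposition~\ref{prop:DeltaZero} then applies: $\Delta_{ij}(f_k) = 0$ would force a factorization $f_k = g \cdot h$ with $g \in R[x_\ell : \ell \neq i, k]$ and $h \in R[x_\ell : \ell \neq j, k]$. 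Assumption (5), that $f_k = \partial_k f$ is irreducible in $R[x_1, \hdots, x_n]$ up to a unit, forces one of $g, h$ to lie in $R$, so $f_k$ fails to depend on one of $x_i$ or $x_j$ --- contradicting the degree-one conclusion just established.

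The diagonal case $i = j$ is an easy addendum: for multiaffine $f$, $\Delta_{ii}(f) = (\partial_i f)^2$, whose $x_k^2$-coefficient is $(\partial_i \partial_k f)^2$, nonzero by Assumption (3). I do not anticipate a serious obstacle here; the most delicate point is simply confirming that Proposition~\ref{prop:DeltaZero} genuinely applies (i.e. that $f_k$ has degree one in both $x_i$ and $x_j$), and this is handled cleanly by Assumption (3).
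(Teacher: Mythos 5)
Your proof is correct and takes essentially the same route as the paper: identify the $x_k^2$-coefficient of $\Delta_{ij}(f)$ as $\Delta_{ij}(f_k)$ (writing each of $f_i, f_j, f, f_{ij}$ as a linear polynomial in $x_k$ and reading off the top coefficient), then combine Proposition~\ref{prop:DeltaZero} with Assumption~\ref{assumptions}(5) to rule out $\Delta_{ij}(f_k)=0$. You are more careful than the paper on one point worth noting: Proposition~\ref{prop:DeltaZero} requires $f_k$ to have degree exactly one in both $x_i$ and $x_j$, and you explicitly derive this from Assumption~\ref{assumptions}(3), whereas the paper applies the proposition without spelling out this hypothesis check; your version also confirms that the resulting factors $g,h$ cannot both be nonconstant, closing the loop to a genuine contradiction with irreducibility. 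The diagonal case $i=j$ you add is a harmless extra, as the paper restricts to $i<j$.
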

\begin{proof}
	For $1\leq i<j\leq n$, we write $\Delta_{ij}(f)$ as a quadratic polynomial in the variable $x_k$:
	\[\Delta_{ij}(f) = f_i f_j - f_{ij} f = (f_{ik} x_k + f_i^k)(f_{jk} x_k + f_j^k) - (f_{ijk} x_k + f_{ij}^k)(f_k x_k +f^k),\]
	which gives 
	\[
	{\rm coeff}(\Delta_{ij}(f), x_k^2) = f_{ik} f_{jk} - f_{ijk} f_k  = \Delta_{ij}(f_k).
	\]
	If $\Delta_{ij}(f_k)=0$, then by Proposition~\ref{prop:DeltaZero}, $f_k$ is reducible, contradicting 
	Assumptions~\ref{assumptions}(5). 
\end{proof}

We next use ring maps given by taking resultants with $f$. 
For any $i=1, \hdots, n$, define 
\[\varphi_i: S[x_1, \hdots, x_m] \to S[x_k : k\neq i] \ \text{ by } \ 
\varphi_i(g) = {\rm Res}_{x_i}(g,f).
\]
For instance if we restrict to polynomials $g=g_jx_j+g^j$ with degree one in $x_j$, then
\[
\varphi_j(g, f) = - g_j f^j + g^j f_j.
\]
First we will start by listing some of the properties of these maps.
\begin{lem}\label{lem:Properties}
	If $f$ satisfies Assumptions~\ref{assumptions}, then, for all $g \in S[{\bf x}]$, the maps $\varphi_1, \hdots, \varphi_n$ satisfy the following:
	\begin{enumerate}
		\item \label{prop:evlf}$\varphi_j(f_i) = \Delta_{ij}(f)$ for all $1\leq i < j \leq n$,
		\item \label{prop:Deltas} $\varphi_j(\Delta_{ik}(f)) = \Delta_{ij}(f) \Delta_{jk}(f)$ for all distinct $1\leq i , j , k\leq n$,
		\item \label{prop:ind} if $\deg_j(g) = 0$, then $\varphi_j(g \cdot h) = g  \cdot \varphi_j(h)$ for all $h\in S[{\bf x}]$,	
		\item if $\deg_j(g)>0 $ and $\deg_j(h) >0$, then $\varphi_j(g \cdot h) = \varphi_j(g) \cdot \varphi_j(h)$ for all $  1\leq j \leq n$,
		\item \label{prop:comp} If $\deg_i(g) = \deg_j(g) = 1$ and $s g_j \notin \langle f_j\rangle$ for all $s \in S$, then $\varphi_j\circ \varphi_i(g) = \Delta_{ij}f \cdot \varphi_j(g)$,
		\item \label{prop:modf}  If $\deg_j(g) = 1$, $\varphi_j(g) \equiv f_j \cdot g$ modulo $\langle f\rangle$. 
	\end{enumerate}
	
\end{lem}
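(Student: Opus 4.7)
My approach is to reduce all six properties to a single explicit identity together with the standard multiplicativity of the resultant against a degree-one polynomial. The key identity, which I would derive first by direct expansion from the definitions, is
\[
\varphi_j(g) \;=\; f_j\,g \;-\; g_j\,f \qquad \text{for any } g \in S[{\bf x}] \text{ with } \deg_j(g) = 1.
\]
Indeed, writing $g = g_j x_j + g^j$ and $f = f_j x_j + f^j$, this falls out of $\varphi_j(g) = g^j f_j - g_j f^j = f_j(g_j x_j + g^j) - g_j(f_j x_j + f^j)$. Reducing modulo $\langle f\rangle$ yields property~(6) immediately, and specializing to $g = f_i$ gives $\varphi_j(f_i) = f_j f_i - f_{ij} f = \Delta_{ij}(f)$, which is property~(1).

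For properties~(3) and~(4), I would work from the evaluation formula $\varphi_j(p) = f_j^{\deg_j p}\, p|_{x_j = -f^j/f_j}$, which is equivalent to the paper's definition after localizing at $f_j$. Multiplicativity of evaluation then immediately gives $\varphi_j(pq) = \varphi_j(p)\,\varphi_j(q)$ whenever $\deg_j(pq) = \deg_j(p) + \deg_j(q)$. This covers both~(3) (with one factor having $\deg_j = 0$, on which $\varphi_j$ acts as the identity) and~(4) (with both factors having positive $x_j$-degree, so no leading-term cancellation in the product).

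For~(2), I would expand $\Delta_{ik}(f) = f_if_k - ff_{ik}$; by Lemma~\ref{lem:quad} both summands have degree exactly $2$ in $x_j$, so additivity of the resultant within a fixed degree (a direct consequence of the evaluation formula) applies. Combined with properties~(1), (4), and the vanishing $\varphi_j(f) = {\rm Res}_{x_j}(f,f) = 0$, this gives
\[
\varphi_j(\Delta_{ik}(f)) \;=\; \varphi_j(f_i)\varphi_j(f_k) - \varphi_j(f)\varphi_j(f_{ik}) \;=\; \Delta_{ij}(f)\,\Delta_{jk}(f).
\]

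Property~(5) follows from the same recipe: apply the key identity in the variable $x_i$ to rewrite $\varphi_i(g) = f_i g - g_i f$, then apply $\varphi_j$ to both sides. Multiplicativity and $\varphi_j(f) = 0$ make the second term vanish, yielding $\varphi_j(\varphi_i(g)) = \Delta_{ij}(f)\,\varphi_j(g)$. The main obstacle is that the resultant is degree-sensitive: for the computation to be clean one must know that $\varphi_i(g)$ has degree exactly $2$ in $x_j$, else a spurious power of $f_j$ appears on both sides. A short calculation shows the leading $x_j^2$-coefficient of $\varphi_i(g)$ equals $f_{ij}g_j - g_{ij}f_j$; if this vanished then $f_j$ would divide $f_{ij}g_j$ in the UFD $S[{\bf x}]$, and since $f_j$ is irreducible by Assumption~\ref{assumptions}(5) and has strictly larger total degree than $f_{ij}$, it would have to divide $g_j$, contradicting the hypothesis $sg_j \notin \langle f_j\rangle$. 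With this degree check in hand, property~(5) follows at once.
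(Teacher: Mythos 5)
Your proof is correct and more systematic than the paper's. The paper explicitly proves only~(5) and~(6), by writing out $\varphi_i(g)$ and $\varphi_j$ of it coefficient-by-coefficient, and then asserts that the remaining properties ``follow similarly by direct computations.'' You instead reduce all six properties to two explicit tools: the identity $\varphi_j(g) = f_j\,g - g_j\,f$ for $\deg_j g = 1$, and the evaluation formula $\varphi_j(p) = f_j^{\deg_j p}\,p|_{x_j=-f^j/f_j}$ valid after localizing at $f_j$. This pays off: (1) and~(6) drop out of the identity; (3) and~(4) become the transparent statement that evaluation is multiplicative in a domain; (2) and~(5) then follow by combining multiplicativity with additivity at fixed $x_j$-degree and the vanishing $\varphi_j(f)=0$. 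In particular, for~(5) your route through property~(4) avoids the paper's unspelled-out ``applying the map $\varphi_j$ and simplifying'' step, and your leading $x_j^2$-coefficient $f_{ij}g_j - g_{ij}f_j$ is exactly the paper's $g_j^i f_{ij} - g_{ij}f_j^i$ once the $x_i$-dependent terms are seen to cancel.

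Two small points to tighten. First, in~(2), Lemma~\ref{lem:quad} controls $\deg_j\Delta_{ik}(f)$, not the degrees of the individual summands $f_if_k$ and $ff_{ik}$; what is actually needed is that $f_{ij}$, $f_{kj}$, $f_j$, and $f_{ijk}$ are all nonzero, which follows from Assumption~\ref{assumptions}(3) together with $S[\mathbf{x}]$ being a domain. Second, in~(5) you invoke irreducibility of $f_j$ in $S[\mathbf{x}]$, while Assumption~\ref{assumptions}(5) only asserts this over $R[\mathbf{x}]$. The paper implicitly upgrades to $S$ as well (it uses coprimality of $f_{ij}$ and $f_j^i$ in $S[\mathbf{x}]$), so this is not a gap particular to your argument; in either version it can be closed by observing that $\deg_{x_i} f_j = 1$ is odd, so $f_j$ cannot factor as a product of two conjugate nonconstants, hence remains irreducible over the quadratic extension $S$.
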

\begin{proof}
	We will prove (\ref{prop:comp}) and (\ref{prop:modf}) and all the other properties follow similarly by direct computations. 
	To prove property (\ref{prop:comp}), we write $g =g_{ij} x_i x_j + g_{i}^j x_i + g_{j}^i x_j + g^{ij}$, then
	\begin{align*}
		\varphi_j \circ \varphi_i(g)  &= \varphi_j(- g_{ij} x_j f^i- g_{i}^j f^i + g_{j}^i f_i x_j + g^{ij} f_i)\\ 
		&= \varphi_j\left((-g_{ij} f_j^i + g_j^i f_{ij})x_j^2 +(-g_{ij}f^{ij}+g_j^if_i^j + g^{ij} f_{ij} - g_i^j f_j^i)x_j +(g^{ij} f_i^j-g_i^j f^{ij})\right).
	\end{align*}
	Since for all $s \in S$, $sg_j \notin \langle f_j\rangle$, we see that ${\rm Coeff}_{x_j^2}(\varphi_i(g)) \neq 0$. Otherwise $g_j^i f_{ij} = g_{ij} f_j^i$, and since $f_j=f_{ij} x_i + f_j^i$ is irreducible up to a constant, then $f_{ij}$ and $f_j^i$ are relatively prime up to a constant $s \in S$. This implies that $f_j^i$ and $f_{ij}$ divide $sg_j^i$ and $sg_{ij}$ respectively and this implies that $sg_j \in \langle f_j\rangle$. 
	
	Applying the map $\varphi_j$ and simplifying then gives
	\[
	\varphi_j \circ \varphi_i(g)= \Delta_{ij}(f) (-g_j f^j + g^j f_j) = \Delta_{ij}(f) (\varphi_j (g))
	\]
	To prove (\ref{prop:modf}), we write $g$ as $g= g_jx_j + g^j$ and we use $f^j = f- f_j x_j$ 
	\[
	\varphi_j(g ) = - g_j f^j +  g^j f_j = -g_j (f - f_j x_j) + g^j f_j = -g_j f + f_j \ g
	\]
	Therefore  $\varphi_j(g) \equiv f_j\ g$ modulo $\langle f \rangle$.
\end{proof}
\begin{lem}\label{lem:twoconj}
	If $f$ satisfies Assumptions~\ref{assumptions} and $\Delta_{ij}(f) = p \overline{p}$ for some $1\leq i<j \leq n$, then for every $k \in [n]\setminus \{i,j\}$, 
	there is a factorization of each $\Delta_{ik}(f)$ and $\Delta_{jk}(f)$ into $q \overline{q}$ and $r\overline{r}$, respectively, such that $\varphi_k(p) = q r$.
\end{lem}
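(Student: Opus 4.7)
The plan is to derive the resultant identity
\[
\varphi_k(p)\,\overline{\varphi_k(p)} \;=\; \Delta_{ik}(f)\,\Delta_{jk}(f)
\]
and then split $\varphi_k(p)$ into the desired product via unique factorization in $S[{\bf x}]$, guided by a careful analysis of degrees in $x_i$ and $x_j$.

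First I would derive the identity. Lemma~\ref{lem:Properties}(\ref{prop:Deltas}) gives $\varphi_k(\Delta_{ij}(f))=\Delta_{ik}(f)\Delta_{jk}(f)$. By Lemma~\ref{lem:quad}, $\Delta_{ij}(f)$ has $x_k$-degree exactly $2$, so $\deg_{x_k}(p)=\deg_{x_k}(\overline p)=1$; hence item (4) of Lemma~\ref{lem:Properties} applies and $\varphi_k(p\overline p)=\varphi_k(p)\,\varphi_k(\overline p)$. Because $f\in R[{\bf x}]$, conjugation commutes with ${\rm Res}_{x_k}(-,f)=\varphi_k(\cdot)$, so $\varphi_k(\overline p)=\overline{\varphi_k(p)}$.

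Next I would analyze $x_i,x_j$-degrees to classify the irreducible factors of $\varphi_k(p)$. Since $\Delta_{ij}(f)\in R[x_\ell:\ell\neq i,j]$, the factorization $\Delta_{ij}(f)=p\overline p$ forces $p,\overline p\in S[x_\ell:\ell\neq i,j]$. Writing $\varphi_k(p)=-p_k f^k + p^k f_k$ shows $\deg_{x_i}\varphi_k(p)\le 1$ and $\deg_{x_j}\varphi_k(p)\le 1$. By Lemma~\ref{lem:quad}, $\deg_{x_j}\Delta_{ik}(f)=2$ and $\deg_{x_i}\Delta_{jk}(f)=2$, so the identity forces both of these inequalities to be equalities. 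Every irreducible factor $\pi$ of $\varphi_k(p)$ divides $\Delta_{ik}(f)\Delta_{jk}(f)$, hence divides at least one of $\Delta_{ik}(f),\Delta_{jk}(f)$, so $\deg_{x_i}\pi=0$ or $\deg_{x_j}\pi=0$. Combined with the degree constraint, there must be a unique irreducible $\pi^{(q)}\mid\varphi_k(p)$ with $\deg_{x_j}=1$ (necessarily $\deg_{x_i}=0$ and dividing $\Delta_{ik}(f)$), a unique $\pi^{(r)}$ with $\deg_{x_i}=1$ (dividing $\Delta_{jk}(f)$), each of multiplicity one in $\varphi_k(p)$, and all remaining irreducible factors lie in $S[x_\ell:\ell\neq i,j,k]$.

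Finally I would distribute the common factors. Write
\[
\varphi_k(p)=\pi^{(q)}\pi^{(r)}\prod_\rho\rho^{e_\rho},\ \Delta_{ik}(f)=\pi^{(q)}\overline{\pi^{(q)}}\prod_\rho\rho^{m_\rho},\ \Delta_{jk}(f)=\pi^{(r)}\overline{\pi^{(r)}}\prod_\rho\rho^{n_\rho},
\]
where $\rho$ ranges over representatives of the remaining irreducibles (lying in $S[x_\ell:\ell\neq i,j,k]$). The identity gives $e_\rho+e_{\overline\rho}=m_\rho+n_\rho$. The decisive input is Assumption~\ref{assumptions}(4) applied to the pair $(i,k)$: because $\Delta_{ik}(f)$ is itself a Hermitian square, $m_\rho$ must be even whenever $\rho$ is associated to $\overline\rho$. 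Choosing $a_\rho=m_\rho/2$ in that case, and $a_\rho\in[\max(0,m_\rho-e_{\overline\rho}),\min(e_\rho,m_\rho)]$ (nonempty since $e_\rho+e_{\overline\rho}\ge m_\rho$) with $a_{\overline\rho}=m_\rho-a_\rho$ otherwise, I set $q=\pi^{(q)}\prod_\rho\rho^{a_\rho}$. After rescaling by a unit of $S$, one has $q\overline q=\Delta_{ik}(f)$ exactly and $q\mid\varphi_k(p)$, so setting $r=\varphi_k(p)/q$ gives $r\overline r=\Delta_{jk}(f)$ automatically. The main obstacle is precisely this parity step: splitting $\varphi_k(p)\overline{\varphi_k(p)}=\Delta_{ik}(f)\Delta_{jk}(f)$ requires $m_\rho$ (and hence $n_\rho$) to be individually even for real $\rho$, and it is Assumption~\ref{assumptions}(4) applied to pairs other than the given $(i,j)$ that supplies this.
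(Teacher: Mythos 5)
Your argument is correct, but it takes a substantially longer route than the paper's. The paper's proof works directly with the Hermitian-square structure provided by Assumption~\ref{assumptions}(4): it factors $\Delta_{ik}(f)=a_1\cdots a_s\,\overline{a_1}\cdots\overline{a_s}$ and $\Delta_{jk}(f)=b_1\cdots b_t\,\overline{b_1}\cdots\overline{b_t}$ into irreducibles, observes from Lemma~\ref{lem:Properties} that $\varphi_k(p)\,\overline{\varphi_k(p)}$ equals the combined product, and then concludes by unique factorization that after swapping each $a_\ell\leftrightarrow\overline{a_\ell}$ and $b_\ell\leftrightarrow\overline{b_\ell}$ as needed, $\varphi_k(p)=a_1\cdots a_s\, b_1\cdots b_t=q\cdot r$. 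Starting from the given Hermitian-square factorizations makes the parity of exponents for self-conjugate irreducibles automatic and makes $q\overline q=\Delta_{ik}(f)$, $r\overline r=\Delta_{jk}(f)$ hold by construction, so no exponent bookkeeping or unit rescaling is required. Your degree analysis in $x_i,x_j$, isolating unique factors $\pi^{(q)},\pi^{(r)}$ of multiplicity one and classifying the remaining factors, is correct but superfluous for this lemma; the paper never uses it. Your ``after rescaling by a unit of $S$'' step is stated without justification — it does work, because both $q\overline q$ and $\Delta_{ik}(f)$ are Hermitian squares so the discrepancy unit is itself a Hermitian square of a unit, but this is exactly the kind of bookkeeping that the paper's choice of starting point eliminates. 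You do correctly identify the decisive input (Assumption~(4) applied to the pairs $(i,k)$ and $(j,k)$), which is also what drives the paper's argument.
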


\begin{proof}
	Since $\Delta_{ik}(f)$ and $\Delta_{jk}(f)$ factor into two conjugates, we can write
	\[
	\Delta_{ik}(f) = a_1 \cdots a_s \cdot \overline{a_1}\cdots \overline{a_s} \ \ \ \ \text{ and } \ \ \ \ \Delta_{jk}(f) = b_1 \cdots b_t \cdot \overline{b_1} \cdots \overline{b_t}
	\]
	where $a_1, \hdots, a_s, b_1,\hdots, b_t$ are irreducible in $S[x_1, \hdots, x_m]$ that are multiaffine in $x_1, \hdots, x_n$. Then
	\[
	\varphi_k(p)\varphi_k(\overline{p}) =  \varphi_k(\Delta_{ij}(f)) = \Delta_{ik}(f)\Delta_{jk}(f) = a_1\cdots a_s\cdot \overline{a_1} \cdots \overline{a_s}\cdot b_1 \cdots b_t \cdot \overline{b_1} \cdots \overline{b_t}.
	\]
	After switching $a_i$ with $\overline{a_i}$ and $b_i$ with $\overline{b_i}$ if necessary, we get
	\[
	\varphi_k(p) = a_1\cdots a_s \cdot b_1 \cdots b_t = q \cdot r 
	\]
	where $q = a_1\cdots a_s$ and $r = b_1\cdots b_t$ are multiaffine polynomials such that $\Delta_{ik}(f) = q \overline{q}$ and $\Delta_{jk}(f) = r\overline{r}$ as desired.\end{proof}

\begin{lem}\label{lem:imcirc}
	If $f$ satisfies Assumptions~\ref{assumptions}
	and for some distinct $1\leq i,j,k \leq n$, the polynomials
	$\Delta_{ij}(f) = p \overline{p}$, $\Delta_{ik}(f) = q \overline{q}$ and $\Delta_{jk}(f) = r \overline{r}$ such that $\varphi_k(p) = q r$, then
	\[
	\varphi_j(q) = p \overline{r} \ \ \text{ and } \ \ \varphi_i(r) = p \overline{q}.
	\]
\end{lem}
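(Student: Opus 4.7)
The plan is to apply $\varphi_j$ to both sides of the given relation $\varphi_k(p) = qr$ and identify the result in two different ways. Since $\Delta_{jk}(f) = r\overline{r}$ does not involve $x_j$, we have $\deg_j(r) = 0$, so property~(\ref{prop:ind}) of Lemma~\ref{lem:Properties} gives $\varphi_j(qr) = r\cdot \varphi_j(q)$. Thus it suffices to compute $\varphi_j(\varphi_k(p))$ directly and show it equals $p\cdot r\overline{r}$, since this will force $\varphi_j(q) = p\overline{r}$ after dividing by $r$ in the integral domain $S[x_1,\hdots,x_m]$.

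The heart of the argument is establishing the identity $\varphi_j \circ \varphi_k(p) = p\cdot \Delta_{jk}(f)$ for our particular $p$. Observe that $p$ satisfies $\deg_j(p) = 0$ (since $\Delta_{ij}(f) = p\overline{p}$ does not involve $x_j$) and $\deg_k(p) = 1$ (by Lemma~\ref{lem:quad}, noting that $\overline{p}$ has the same multidegrees as $p$ and $\deg_k(p\overline{p}) = 2$). Writing $p = p_k x_k + p^k$ with $p_k, p^k$ constant in both $x_j$ and $x_k$, we obtain $\varphi_k(p) = p^k f_k - f^k p_k$. Applying $\varphi_j$ and using property~(\ref{prop:ind}) on the constants $p_k, p^k$ yields
\[
\varphi_j(\varphi_k(p)) = p^k\cdot \varphi_j(f_k) - p_k\cdot \varphi_j(f^k).
\]
Property~(\ref{prop:evlf}) gives $\varphi_j(f_k) = \Delta_{jk}(f)$, and a direct expansion of the resultant using $f_j = f_{jk} x_k + f_j^k$ and $f^j = f_k^j x_k + f^{jk}$ shows that $\varphi_j(f^k) = f^{jk} f_j - f^j f_j^k = -x_k\,\Delta_{jk}(f)$. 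Substituting gives $\varphi_j(\varphi_k(p)) = (p^k + p_k x_k)\,\Delta_{jk}(f) = p\cdot r\overline{r}$. Combined with $\varphi_j(qr) = r\cdot \varphi_j(q)$, this yields $\varphi_j(q) = p\overline{r}$.

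The equality $\varphi_i(r) = p\overline{q}$ follows by the symmetric argument with $j$ replaced by $i$: since $\deg_i(q) = 0$ and $\deg_i(r) = 1$, property~(\ref{prop:ind}) gives $\varphi_i(qr) = q\cdot \varphi_i(r)$, and the analogous computation shows $\varphi_i(\varphi_k(p)) = p\cdot \Delta_{ik}(f) = p\,q\overline{q}$, forcing $\varphi_i(r) = p\overline{q}$.

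The main obstacle is the mixed-composition identity $\varphi_j \circ \varphi_k(p) = p\cdot \Delta_{jk}(f)$, which is a companion to property~(\ref{prop:comp}) of Lemma~\ref{lem:Properties} adapted to the case $\deg_j(p) = 0$ rather than $\deg_j(p) = 1$; once this is established, the rest is a short application of properties already proved in Lemma~\ref{lem:Properties}.
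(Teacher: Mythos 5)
Your proof is correct and follows essentially the same strategy as the paper: apply $\varphi_j$ (resp.\ $\varphi_i$) to the given relation $\varphi_k(p)=qr$, use property~(3) on the degree-zero factor, establish the mixed-composition identity $\varphi_j\circ\varphi_k(p)=p\,\Delta_{jk}(f)$, and cancel. Where the paper cites Lemma~\ref{lem:Properties}(5) for the step $\varphi_j\circ\varphi_k(p)=\Delta_{jk}(f)\cdot\varphi_j(p)$ and then simplifies $\varphi_j(p)=p$, you instead prove the needed identity directly by expanding $\varphi_k(p)=p^k f_k - p_k f^k$, applying property~(3), and computing $\varphi_j(f_k)=\Delta_{jk}(f)$ and $\varphi_j(f^k)=-x_k\Delta_{jk}(f)$. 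This is actually a bit more careful: as stated, Lemma~\ref{lem:Properties}(5) has the hypothesis $\deg_j(g)=1$, but here $\deg_j(p)=0$, so invoking it requires an (unstated) degree-zero extension. Your direct computation supplies precisely that companion identity and avoids the issue; otherwise the two arguments are the same.
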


\begin{proof}
	We will prove the first equality and the second holds similarly. First notice that since $\deg_i(p)=\deg_j(p)=0$,  $sp \notin \langle f_j \rangle$ for all $s \in S$.  Also, $\deg_{j}(r) = \deg_k(r)=0$. 
	Then using the properties in Lemma~\ref{lem:Properties} we get	
	\[ \varphi_j(q) r  = \varphi_j(q r) = \varphi_j\circ \varphi_k(p) = \Delta_{jk}(f) \cdot \varphi_j(p) = \Delta_{jk}(f) \cdot p.\]
	Since $\Delta_{jk}(f) = r \overline{r}$, dividing the above equation by $r$ gives the desired result. 
\end{proof}	

The following algorithm gives the desired factorizations of $\Delta_{ij}(f)$ into $g_{ij}\overline{g_{ij}}$ that
satisfy the hypothesis of Theorem \ref{thm:DeltaDetRep}, which will in turn give the desired 
Hermitian determinantal representation in Theorem~\ref{thm:HerDetRep}.

% remove [H] from below to un-force placement

\begin{algorithm}[H]\caption{Compatible Hermitian factorizations of Rayleigh differences \smallskip \\
		\textbf{Input:} $f\in R[x_1, \hdots, x_n]$ satisfying Assumptions~\ref{assumptions}  \\
		\textbf{Output:} Polynomials $\{g_{jk} : 1\leq j < k \leq n\}$ in $S[x_1, \hdots, x_n]$\smallskip
	}\label{alg:HerMatConst}
	\begin{algorithmic}[0]
		\State Take $g_{12}\in \K[x_1, \hdots, x_n]$ so that $\Delta_{12}f = g_{12}\cdot \overline{g_{12}}$
%		\For{$k = 1, k\leq n, k${\footnotesize $++$}}
%		\State $g_{kk} := f_k$
%		\EndFor
		\For {$k = 3,k\leq n, k${\footnotesize $++$}}
		\State  $Q_0 := \gcd\{\Delta_{1k}(f) , \varphi_k(g_{12}), \hdots, \varphi_k(g_{1(k-1)})\}$\smallskip 
		\State  Factor $\Delta_{1k}(f) = p_{k,1} \cdots p_{k,m_k} \cdot \overline{p_{k,1}} \cdots \overline{p_{k,m_k}}$ with $p_{k,j}$ irreducible for all  $j$
		\For{$j = 1, j \leq m_k, j${\footnotesize $++$}}
		\If{$p_{k,j} \overline{p_{k,j}}$ divides $Q_{j-1}$} $Q_{j} := Q_{j-1}/\overline{p_{k,j}}$
		\Else  \ $Q_{j} := Q_{j-1}$
		\EndIf
		\EndFor
		\State $g_{1k}:= Q_{m_k}$
		\For{$j = 2, j \leq k-1, j${\footnotesize $++$}} \smallskip 
		\State $g_{jk} := \overline{\varphi_k(g_{1j})/g_{1k}}$
		\EndFor
		\EndFor
	\end{algorithmic}
\end{algorithm}

\begin{prop}\label{prop:algJust}
	The polynomials $\{g_{ik}\}_{1 \leq i <k \leq n}$ constructed in Algorithm~\ref{alg:HerMatConst} satisfy
	\begin{itemize}
		\item[(a)] $g_{1k}$ is multiaffine in  $x_1,\hdots,x_n$ for all $k>1$,
		\item[(b)] $\varphi_k(g_{1i})= g_{1k} \overline{g_{ik}}$ for all $1<i<k$, and 
		\item[(c)] $\Delta_{ik}(f) = g_{ik} \overline{g_{ik}}$ for all $1\leq i< k$.
	\end{itemize}
\end{prop}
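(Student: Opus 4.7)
The plan is to proceed by strong induction on $k$. For the base case $k=2$, Assumption~\ref{assumptions}(4) provides $g_{12}$ with $\Delta_{12}(f) = g_{12}\overline{g_{12}}$, giving (c); since $\Delta_{12}(f)$ has degree exactly two in each $x_\ell$ with $\ell\neq 1,2$ by Lemma~\ref{lem:quad} and conjugation preserves per-variable degrees, both factors are multiaffine, giving (a); claim (b) is vacuous.

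For the inductive step at $k\geq 3$, the crux is verifying (c) for $i=1$: that $g_{1k}:=Q_{m_k}$ produced by the algorithm satisfies $g_{1k}\overline{g_{1k}}=\Delta_{1k}(f)$. Factor $\Delta_{1k}(f)=\prod_\ell p_{k,\ell}\overline{p_{k,\ell}}$ into irreducibles in $S[{\bf x}]$ as in the algorithm. For each $i\in\{2,\ldots,k-1\}$, the inductive (c) gives $\Delta_{1i}(f)=g_{1i}\overline{g_{1i}}$, and Lemma~\ref{lem:twoconj} supplies a factorization $\varphi_k(g_{1i})=q_ir_i$ with $q_i\overline{q_i}=\Delta_{1k}(f)$ and $r_i\overline{r_i}=\Delta_{ik}(f)$. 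Thus each $\varphi_k(g_{1i})$ is divisible by $q_i$, and $q_i$ contains exactly one element of each pair $\{p_{k,\ell},\overline{p_{k,\ell}}\}$. The key sub-claim---and the main obstacle---is that the $q_i$'s agree on each pair: there is a common ``half'' of $\Delta_{1k}(f)$ dividing every $\varphi_k(g_{1i})$. This compatibility should propagate from the inductive (b), which ensures $\varphi_j(g_{1i})=g_{1j}\overline{g_{ij}}$ for $i<j<k$, via iterated application of the cyclic identities in Lemma~\ref{lem:imcirc}. Granted compatibility, $Q_0=\gcd(\Delta_{1k}(f),\varphi_k(g_{12}),\ldots,\varphi_k(g_{1,k-1}))$ is divisible by at least one of $\{p_{k,\ell},\overline{p_{k,\ell}}\}$ for each $\ell$; the for-loop then removes $\overline{p_{k,\ell}}$ from $Q_{j-1}$ whenever both factors appear, so $g_{1k}$ contains exactly one of each pair and $g_{1k}\overline{g_{1k}}=\Delta_{1k}(f)$.

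Given (c) at $(1,k)$, claim (a) at $k$ follows as in the base case from Lemma~\ref{lem:quad}. For (b), the fact that $g_{1k}$ divides $Q_0$ and hence each $\varphi_k(g_{1i})$ means that $g_{ik}:=\overline{\varphi_k(g_{1i})/g_{1k}}$ is a bona-fide polynomial, and then $\varphi_k(g_{1i})=g_{1k}\overline{g_{ik}}$ by construction. For (c) with $i>1$, compute $\varphi_k(g_{1i})\overline{\varphi_k(g_{1i})}$ in two ways. By (b) it equals $(g_{1k}\overline{g_{1k}})(g_{ik}\overline{g_{ik}})=\Delta_{1k}(f)\cdot g_{ik}\overline{g_{ik}}$. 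On the other hand, since $\deg_k(g_{1i})=1$ (from $\Delta_{1i}(f)$ being multiquadratic and $g_{1i}\overline{g_{1i}}=\Delta_{1i}(f)$), Lemma~\ref{lem:Properties}(4) gives $\varphi_k(g_{1i})\overline{\varphi_k(g_{1i})}=\varphi_k(g_{1i}\overline{g_{1i}})=\varphi_k(\Delta_{1i}(f))$, which by Lemma~\ref{lem:Properties}(2) equals $\Delta_{1k}(f)\Delta_{ik}(f)$. Cancelling $\Delta_{1k}(f)$ yields $g_{ik}\overline{g_{ik}}=\Delta_{ik}(f)$.

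The principal obstacle is the compatibility sub-claim in the inductive step; the rest follows formally. Making compatibility rigorous will require carefully tracking, through the induction, how the factorizations produced by Lemma~\ref{lem:twoconj} at stage $k$ inherit structure from the choices made at stages $2,\ldots,k-1$---likely via Lemma~\ref{lem:imcirc} applied along the triangle of previously-chosen $g_{ij}$'s to transfer identification of a consistent ``half'' of $\Delta_{1k}(f)$ among all $\varphi_k(g_{1i})$.
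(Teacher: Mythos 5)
You have correctly identified the structure of the argument, correctly disposed of the easy parts (the base case, deriving (a) and (b) from (c) at $i=1$, and computing $g_{ik}\overline{g_{ik}}$ for $i>1$), and correctly isolated the genuine difficulty: establishing that the $\gcd$ $Q_0$ picks up a full ``half'' of $\Delta_{1k}(f)$, so that after the for-loop removes the redundant conjugates, $g_{1k}\overline{g_{1k}}=\Delta_{1k}(f)$. But that is exactly where the proposal stops, with ``Making compatibility rigorous will require carefully tracking\ldots''\,---\,so the heart of the proof is left as an acknowledged gap.

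The paper closes this gap with a proof by contradiction rather than by verifying your compatibility statement directly. Suppose $g_{1k}\overline{g_{1k}}\neq\Delta_{1k}(f)$. Then, since $g_{1k}\overline{g_{1k}}$ divides $\Delta_{1k}(f)$, there is an irreducible factor $p$ of $\Delta_{1k}(f)$ such that neither $p$ nor $\overline{p}$ divides $g_{1k}$, hence neither divides $Q_0$. Since the inductive hypothesis gives, for each $1<i<k$, that $\varphi_k(g_{1i})\,\overline{\varphi_k(g_{1i})}=\varphi_k(\Delta_{1i}(f))=\Delta_{1k}(f)\,\Delta_{ik}(f)$, the irreducible $p$ must divide $\varphi_k(g_{1i})$ or $\overline{\varphi_k(g_{1i})}$ for each such $i$. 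Because neither $p$ nor $\overline{p}$ divides $Q_0$, there are \emph{distinct} indices $i<j$ with $p\nmid\varphi_k(g_{1i})$ (so $\overline{p}\mid\varphi_k(g_{1i})$) and $\overline{p}\nmid\varphi_k(g_{1j})$ (so $p\mid\varphi_k(g_{1j})$). Then the inductive (b) gives $\varphi_j(g_{1i})=g_{1j}\overline{g_{ij}}$, Lemma~\ref{lem:imcirc} gives $\varphi_1(g_{ij})=g_{1i}\overline{g_{1j}}$, and applying $\varphi_k$ with Lemma~\ref{lem:Properties}(\ref{prop:comp}) yields
\[
\Delta_{1k}(f)\,\varphi_k(g_{ij})=\varphi_k\!\bigl(\varphi_1(g_{ij})\bigr)=\varphi_k(g_{1i})\,\overline{\varphi_k(g_{1j})}.
\]
The left side is divisible by $p$, but neither factor on the right is\,---\,contradiction. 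This single cyclic identity, applied once, is what replaces the ``iterated application of the cyclic identities'' you gesture at; you need to supply it (or an equivalent argument) for the proof to be complete.
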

\begin{proof}
	(a) This is immediate for $k = 2$. For $2 < k\leq n$, notice that $\Delta_{1k}(f)$ has 
	degree two in $x_1, \hdots, x_n$. Let $\ell \in [n]\backslash\{1,k\}$ and 
	let $p_{k,j}, \overline{p_{k,j}}$ be the unique 
	irreducible factors of $\Delta_{1k}(f)$ with degree one in $x_{\ell}$. 
	By construction, $g_{1k}$ divides $Q_{j}$, which in turn divides $\Delta_{1k}(f)/ \overline{p_{k,j}}$. 
	Since this quotient only has degree one in $x_{\ell}$, $g_{1k}$  must have degree $\leq 1$ in $x_{\ell}$. 
	
	(b) follows directly from construction.  
	
	(c) We proceed by induction on $k$. It is trivially true for $k = 2$. For the inductive step, we will prove the claim for $\Delta_{1k}(f)$ and the other cases follow. By construction, $g_{1k} \overline{g_{1k}}$ divides $\Delta_{1k}(f)$. 
	To see this, note that for each $j=1, \hdots, m_k$ in Algorithm~\ref{alg:HerMatConst}, we can take $q_{j} = p_{k,j}$ if
	$p_{k,j}$ divides $g_{1k}$ and $q_{j} = \overline{p_{k,j}}$ otherwise.  Then, by construction, $g_{1k}$ divides $q = \prod_{j=1}^{m_k}q_j$ and $q\cdot\overline{q} = \Delta_{1k}(f)$, showing that $g_{1k}\cdot \overline{g_{1k}}$ divides 
	$\Delta_{1k}(f)$. 
	
	Suppose for the sake of contradiction that $\Delta_{1k}(f) \neq g_{1k} \overline{g_{1k}}$. Then there is some irreducible factor $p$ of $\Delta_{1k}(f)$ such that $p \overline{p}$ does not divide $g_{1k}\overline{g_{1k}}$. 
	We claim that for every $1< i<k$, either $p$ or $\overline{p}$ divides $\varphi_k(g_{1i})$. 
	By induction, for $1<i<k$, $g_{1i} \overline{g_{1i}}=\Delta_{1i}(f)$. Applying $\varphi_k$ gives 
	\[
	\varphi_k(g_{1i})\cdot \varphi_k(\overline{g_{1i}}) = \varphi_k(\Delta_{1i}(f)) = \Delta_{1k}(f)\cdot  \Delta_{ik}(f). 
	\]
	Since $p$ is irreducible and divides $\Delta_{1k}(f)$, it must divide either $\varphi_k(g_{1i})$ or 
	$\varphi_k(\overline{g_{1i}})=\overline{\varphi_k(g_{1i})}$. In the latter case, $\overline{p}$ divides $\varphi_k(g_{1i})$.
	Since neither $p$ nor its conjugate divide $g_{1k}$, it follows from the construction that 
	neither $p$ nor $\overline{p}$ divide $Q_0 = \gcd\{\Delta_{1k}(f) , \varphi_k(g_{12}), \hdots, \varphi_k(g_{1(k-1)})\}$.  Hence there exists distinct
	$2 \leq i, j <k$ such that neither $p$ divide $\varphi_k(g_{1i})$ nor $\overline{p}$ divide $\varphi_k(g_{1j})$. By switching $p$ and $\overline{p}$ if needed, we can assume $i< j$. 
	
	By induction (on $k$), we know that $\Delta_{1i}(f) = g_{1i} \overline{g_{1i}}$, $\Delta_{1j}(f) = g_{1j} \overline{g_{1j}}$ and $\Delta_{ij}(f) = g_{ij} \overline{g_{ij}}$. 
	Moreover, by (b), 	$\varphi_j(g_{1i})= g_{1j} \overline{g_{ij}}$.
	Lemma~\ref{lem:imcirc} then  implies that $\varphi_1(g_{ij}) = g_{1i} \overline{g_{1j}}$ and
	\[
	\Delta_{1k}(f) \varphi_k(g_{ij}) = \varphi_k(\varphi_1(g_{ij})) = \varphi_k(g_{1i} \overline{g_{1j}})= \varphi_k(g_{1i}) \varphi_k(\overline{g_{1j}}).
	\]
	Now neither $\varphi_k(g_{1i})$ nor $\varphi_k(\overline{g_{1j}})=\overline{\varphi_k(g_{1j})}$ is divisible by $p$ while $p$ divides $\Delta_{1k}(f)$ and this gives the desired contradiction. Therefore $\Delta_{1k}(f) = g_{1k} \overline{g_{1k}}$. 
	
	For $1<i<k$, we calculate that
	\[g_{ik}\cdot \overline{g_{ik}} = \frac{\varphi_k(\overline{g_{1i}})}{\overline{g_{1k}}} \cdot 
	\frac{\varphi_k(g_{1i})}{g_{1k}}
	= \frac{\varphi_k(\overline{g_{1i}}g_{1i})}{\Delta_{1k}(f)} = \frac{\Delta_{1k}(f)\Delta_{ik}(f)}{\Delta_{1k}(f)} = \Delta_{ik}(f). \]	
\end{proof}

\begin{cor}\label{cor:IrredDetRep}
	If $f\in R[x_1, \hdots, x_n]$ satisfies Assumptions~\ref{assumptions}, then there exists a factorization of $\Delta_{ij}(f)$ into $g_{ij}g_{ji}$ such that $g_{ij} \in S[x_1,\hdots,x_n]$, $g_{ji} = \overline{g_{ij}}$, and $\varphi_k(g_{ij}) = g_{ik} g_{kj}$  for all distinct $1\leq i,j,k \leq n$.
\end{cor}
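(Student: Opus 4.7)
I plan to deduce the corollary from Algorithm~\ref{alg:HerMatConst} and Proposition~\ref{prop:algJust} by extending the family to reversed indices and then checking the resultant identity $\varphi_k(g_{ij})=g_{ik}g_{kj}$ via a case analysis on whether $1\in\{i,j,k\}$. Running the algorithm produces polynomials $\{g_{ij}\}_{1\le i<j\le n}\subset S[\mathbf{x}]$ satisfying Proposition~\ref{prop:algJust}(a)--(c); I would set $g_{ji}:=\overline{g_{ij}}$ whenever $i<j$. Then $\Delta_{ij}(f)=g_{ij}g_{ji}$ and $g_{ji}=\overline{g_{ij}}$ hold automatically. Since $f\in R[\mathbf{x}]$, the map $\varphi_k$ commutes with the involution, so the identity for $(i,j,k)$ implies the identity for $(j,i,k)$; combined with the conclusions of Lemma~\ref{lem:imcirc}, it suffices to establish a single orientation of the identity for each unordered triple $\{i,j,k\}$.

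For a triple $\{1,i,k\}$ with $1<i<k$, Proposition~\ref{prop:algJust}(b) gives $\varphi_k(g_{1i})=g_{1k}\overline{g_{ik}}=g_{1k}g_{ki}$. Applying Lemma~\ref{lem:imcirc} with $(p,q,r)=(g_{1i},g_{1k},g_{ki})$ then yields the other two orientations $\varphi_i(g_{1k})=g_{1i}g_{ik}$ and $\varphi_1(g_{ki})=g_{1i}g_{k1}$. Together with conjugation, this covers every identity on every triple containing $1$.

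The main obstacle is the case $1\notin\{i,j,k\}$, because Lemma~\ref{lem:imcirc} is circular on this triple: the identity to be proved is precisely its hypothesis. I would circumvent this by computing $\varphi_k\circ\varphi_j(g_{1i})$ in two ways for $1<i<j$ and $k\in[n]\setminus\{1,i,j\}$. By the algorithm, $\varphi_j(g_{1i})=g_{1j}g_{ji}$, and by Lemma~\ref{lem:quad} both $g_{1j}$ and $g_{ji}$ have degree exactly $1$ in $x_k$, so Lemma~\ref{lem:Properties}(4) together with the already-established identity $\varphi_k(g_{1j})=g_{1k}g_{kj}$ from the preceding case gives
\[
\varphi_k(\varphi_j(g_{1i})) \;=\; \varphi_k(g_{1j})\,\varphi_k(g_{ji}) \;=\; g_{1k}g_{kj}\,\varphi_k(g_{ji}).
\]
On the other hand, Lemma~\ref{lem:Properties}(5) applied to $g_{1i}$ with the roles $(i,j)\mapsto(j,k)$ gives
\[
\varphi_k(\varphi_j(g_{1i})) \;=\; \Delta_{jk}(f)\,\varphi_k(g_{1i}) \;=\; g_{jk}g_{kj}\,g_{1k}g_{ki}.
\]
Cancelling the non-zero common factor $g_{1k}g_{kj}$ in the UFD $S[\mathbf{x}]$ yields $\varphi_k(g_{ji})=g_{jk}g_{ki}$, and a final application of Lemma~\ref{lem:imcirc} combined with conjugation distributes this identity across every orientation of the triple.

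The one technical point is verifying the hypothesis of Lemma~\ref{lem:Properties}(5): the degree equalities $\deg_j(g_{1i})=\deg_k(g_{1i})=1$ follow from Lemma~\ref{lem:quad} applied to $\Delta_{1i}(f)=g_{1i}\overline{g_{1i}}$, and for the non-divisibility condition I would argue that $\partial g_{1i}/\partial x_k$ lies in $S[x_\ell:\ell\neq 1,i,k]$, while Assumption~\ref{assumptions}(3) forces $f_k=\partial f/\partial x_k$ to have a nonzero coefficient on $\prod_{\ell\neq k}x_\ell$, so every irreducible factor of $f_k$ in $S[\mathbf{x}]$ depends non-trivially on $x_1$ and $x_i$. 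The two polynomials therefore cannot be proportional, completing the argument.
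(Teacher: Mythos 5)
Your proof is correct and follows essentially the same strategy as the paper's: run Algorithm~\ref{alg:HerMatConst}, set $g_{ji}:=\overline{g_{ij}}$, dispose of triples containing $1$ via Proposition~\ref{prop:algJust}(b) and Lemma~\ref{lem:imcirc}, and handle a general triple $\{i,j,k\}$ by composing two $\varphi$'s and invoking Lemma~\ref{lem:Properties}(4) and (5). The only difference is which composition you take: the paper rewrites $g_{j1}g_{1i}=\varphi_1(g_{ji})$ and evaluates $\varphi_k\circ\varphi_1(g_{ji})$ two ways, cancelling $\Delta_{1k}(f)$, whereas you evaluate $\varphi_k\circ\varphi_j(g_{1i})$ two ways and cancel $g_{1k}g_{kj}$. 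These are dual arrangements of the same computation, not a genuinely different argument.

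One small point on your verification of the Lemma~\ref{lem:Properties}(5) hypothesis: the phrasing about \emph{every irreducible factor of $f_k$} depending on $x_1$ and $x_i$ is not quite the right framing (and you don't need it). The cleaner observation is that $(g_{1i})_k=\partial g_{1i}/\partial x_k$ has degree $0$ in $x_1$, whereas Assumption~\ref{assumptions}(3) forces $f_k$ to have degree exactly $1$ in $x_1$; hence in the integral domain $S[\mathbf{x}]$ no nonzero element of $\langle f_k\rangle$ can have degree $0$ in $x_1$, so $s(g_{1i})_k\notin\langle f_k\rangle$ for $s\neq 0$. This is a purely expository slip; the degree-counting idea underneath is sound, and the paper itself does not spell out this hypothesis check.
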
	
\begin{proof} Let $\{g_{ij}: 1\leq i< j\leq n\}$ be the polynomials given by Algorithm~\ref{alg:HerMatConst}  and for $i<j$ let $g_{ji} = \overline{g_{ij}}$. 
	By Proposition~\ref{prop:algJust}, $\Delta_{ij}(f)= g_{ij}\overline{g_{ij}} = g_{ji}g_{ij}$. 
	Since $\Delta_{ij}(f)$ is quadratic in each variable $x_1, \hdots, x_n$, then 
	$g_{ij}$ is multiaffine in $x_1, \hdots, x_n$. We will show that $\varphi_k(g_{ij}) = g_{ik}g_{kj}$ for all distinct $i,j,k$. Assuming that $i<j<k$ and using Proposition~\ref{prop:algJust} we get
	\begin{align*}	
		{\rm res}_{x_k}(g_{1i},f) &= \varphi_k(g_{1i}) = g_{1k}\overline{g_{ki}} = g_{1k}g_{ki}, \text{ and }\\
		{\rm res}_{x_k}(g_{j1},f) &= \varphi_k(g_{j1}) = \overline{\varphi_k(g_{1j})} =\overline{ g_{1k}\overline{g_{kj}}} = g_{jk}g_{k1}.
	\end{align*}
	Multiplying the above two equations and using Properties~\ref{lem:Properties} we get
	\[
	\varphi_k(g_{1i}g_{j1}) = \Delta_{1k}(f) g_{ki} g_{jk}.
	\]
	Using Proposition~\ref{prop:algJust} again, we know that $\varphi_j(g_{1i}) = g_{1j} g_{ji}$ and Lemma~\ref{lem:imcirc} implies that $\varphi_1(g_{ji}) = g_{j1} g_{1i}$. Again using Properties~\ref{lem:Properties} we find that 
	\[
	\Delta_{1k}(f) \varphi_k(g_{ji}) = \Delta_{1k}(f) g_{ki} g_{jk}.
	\]
	Since $f$ is irreducible, $\Delta_{1k}(f)$ is nonzero and we conclude that $\varphi_k(g_{ij}) = \overline{\varphi_k(g_{ji})}=\overline{g_{ki} g_{jk}} = g_{ik} g_{kj}$. Using Lemma~\ref{lem:imcirc}, we get that $\varphi_j(g_{ik}) = g_{ij} g_{jk}$ and $\varphi_i(g_{jk}) = g_{ji} g_{ik}$ as desired. 
	\end{proof}

\begin{example}\label{ex:HerEx}($n=4)$. Consider  $f \in \mathbb{R}[x_1,x_2,x_3,x_4]$ given by
	\[
	f(x_1,x_2,x_3,x_4) = x_1 x_2 x_3 x_4 - x_1 x_2 -  x_1 x_3 -  x_1 x_4 - x_2 x_3 - x_2 x_4 -  x_3 x_4 +1
	\]
	For any distinct $i,j,k, \ell \in [4]$, the Raleigh differences of $f$ with respect to $x_i$ and $x_j$ is
	\[
	\Delta_{ij}(f) = (x_k^2 +1)(x_{\ell}^2+1) = (x_k - \ci)(x_k + \ci) (x_{\ell} + \ci)(x_{\ell} -\ci).
	\]
	Using Algorithm~\ref{alg:HerMatConst}, we can choose $g_{12}$ as any multiaffine factor of $\Delta_{12}(f)$ of degree two. There are two possibilities, namely $g_{12} = (x_3 - \ci) (x_4-\ci)$ or $g_{12} = (x_3 - \ci)(x_4 +\ci)$ and one can check that either choice works. We will start with the first option and compute
	\[
	\varphi_3(g_{12}) = -\ci (x_1+\ci) (x_2+\ci) (x_4-\ci) (x_4 +\ci).
	\]
	To choose $g_{13}$ we compute $\gcd(\Delta_{13}(f),\varphi_3(g_{12})) = -\ci(x_2+\ci)(x_4-\ci)(x_4+\ci)$. Thus, up to a constant, we have two choices for $g_{13}$, namely $-\ci(x_2 + \ci) ( x_4 - \ci)$ or $-\ci(x_2 + \ci)(x_4 + \ci)$.
	We will choose the first option, giving 
	\[
	g_{23} = \overline{(\varphi_3(g_{12})/g_{13})} = (x_1+\ci)(x_4+\ci).
	\] 
	To find $g_{14}$, we compute the $\gcd(\Delta_{14}(f), \varphi_4(g_{12}),\varphi_4(g_{13})) = -\ci(x_2+\ci)(x_3 + \ci)$ and we get $g_{24}$ and $g_{34}$ similarly. The final matrix is $M =$
	\[
	 \small\begin{pmatrix}
		-x_2 x_3 x_4-x_2-x_3-x_4 & (x_3 - \ci) (x_4-\ci) & -\ci(x_2 + \ci) ( x_4 - \ci) &  -\ci(x_2+\ci)(x_3 + \ci) \\
		(x_3 + \ci) (x_4+\ci)	&  x_1 x_3 x_4-x_1-x_2-x_3 & (x_1+\ci)(x_4-\ci) & (x_1-\ci)(x_3+\ci)\\
		\ci(x_2-\ci)(x_4+\ci) & (x_1-\ci)(x_4+\ci) &  x_1 x_2 x_4 -x_1 - x_2 - x_4& \ci(x_1-\ci)(x_2-\ci)\\
		\ci(x_2-\ci)(x_3 - \ci) & (x_1+\ci)(x_3-\ci) & -\ci(x_1+\ci)(x_2+\ci) & x_1 x_2 x_3 -x_1 - x_2 - x_3
	\end{pmatrix}.
	\]
	Now we compute the adjugate matrix of $M$ and divide its entries by $f^2$ we get 
	\[
	A= \frac{1}{f^2}M^{\rm adj} = \small\begin{pmatrix}
		x_1 & -1 & \ci & \ci \\ -1 & x_2 & -1 & -1 \\ -\ci & -1 & x_3 & -\ci \\ -\ci & -1 & \ci & x_4
	\end{pmatrix}
	\]
	and one can check that $\det(A) = f$. The algorithm gives all the possible representations of $f$, up to diagonal equivalence, namely
	\[
	\left\{
	\small\begin{pmatrix}
		x_1 & -1 & \ci & \ci \\ -1 & x_2 & -1 & -1 \\ -\ci & -1 & x_3 & -\ci \\ -\ci & -1 & \ci & x_4
	\end{pmatrix}, \ \
	\small\begin{pmatrix}
		x_1 & -1 & \ci & \ci \\ -1 & x_2 & -1 & -1 \\ -\ci & -1 & x_3 & \ci \\ -\ci & -1 & -\ci & x_4
	\end{pmatrix}, \ \
	\small\begin{pmatrix}
		x_1 & -1 & \ci & -\ci \\ -1 & x_2 & -1 & -1 \\ -\ci & -1 & x_3 & -\ci \\ \ci & -1 & \ci & x_4
	\end{pmatrix}
	\right\}.
	\]
\end{example}

\section{Hermitian determinantal representations}\label{sec:otherDetRep}
Let $\K$ be a field with an automorphism $a\mapsto \overline{a}$ of order two. 
Let $\F$ be the fixed field of this automorphism. 
We call a matrix $A\in {\rm Mat}_n(\mathbb{K})$ Hermitian if $A = \overline{A}^T$. 

\subsection{Consequences of Algorithm~\ref{alg:HerMatConst} }

\begin{thm}\label{thm:HerDetRep}
	Let $f \in \F[x_1, \hdots, x_m]$ be a polynomial of total degree $n\leq m$ that is multiaffine in 
	$x_1, \hdots, x_n$ and coefficient of $x_1\cdots x_n$ equals to one. 
		There exist Hermitian matrices $A_{n+1}, \hdots, A_m, A_0$ so that 
	\[
	f = \det\left( {\rm diag}(x_1, \hdots, x_n) +\sum_{j=n+1}^mx_jA_j +A_0\right)
	\]
	if and only if for all $i,j\in [n]$, $\Delta_{ij}(f)$ is a Hermitian square in $ \K[x_1, \hdots, x_m]$.
\end{thm}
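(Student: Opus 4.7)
The plan is to prove the two directions separately; the forward direction is quick, and the reverse direction builds on Corollary~\ref{cor:IrredDetRep}.

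\emph{Forward direction.} Suppose $f=\det(M)$ with $M={\rm diag}(x_1,\ldots,x_n)+\sum_{j=n+1}^m x_jA_j+A_0$ Hermitian. The Desnanot--Jacobi identity \eqref{eq:dodgson} used in the proof of Theorem~\ref{thm:DeltaDetRep} gives $\Delta_{ij}(f)=\det(M(i,j))\det(M(j,i))$, and Hermitianness of $M$ together with $\overline{x_k}=x_k$ implies $M(j,i)=\overline{M(i,j)}^T$, so $\det(M(j,i))=\overline{\det(M(i,j))}$. Thus $\Delta_{ij}(f)$ is the Hermitian square $g\overline{g}$ with $g=\det(M(i,j))\in\K[x_1,\ldots,x_m]$.

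\emph{Reverse direction.} View $f\in R[x_1,\ldots,x_n]$ as multiaffine over $R=\F[x_{n+1},\ldots,x_m]$, with $S=\K[x_{n+1},\ldots,x_m]$ and the coefficient-wise involution; this is the setting in which Algorithm~\ref{alg:HerMatConst} and Corollary~\ref{cor:IrredDetRep} operate. Under Assumptions~\ref{assumptions}, Corollary~\ref{cor:IrredDetRep} yields factorizations $\Delta_{ij}(f)=g_{ij}\overline{g_{ij}}$ with $g_{ji}:=\overline{g_{ij}}$ satisfying conditions (a) and (b) of Theorem~\ref{thm:DeltaDetRep}. That theorem then produces a determinantal representation $f=\det({\rm diag}(x_1,\ldots,x_n)+A)$ with $A\in\Mat_n(S)$; its adjugate $G$ has $(i,j)$-entry $g_{ij}$, with diagonal entries $g_{ii}=\partial f/\partial x_i\in R$. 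Because $g_{ji}=\overline{g_{ij}}$, $G$ is Hermitian; adjugates of Hermitian matrices are Hermitian, and $f\in R$ is real, so the identity $M={\rm diag}(x_1,\ldots,x_n)+A=f^{2-n}G^{\rm adj}$ (from the remark following Theorem~\ref{thm:DeltaDetRep}) shows $M$ is Hermitian. To split $A=A_0+\sum_{j=n+1}^m x_jA_j$ with each $A_j\in\Mat_n(\K)$, I use the total-degree hypothesis: since $\deg(f)=n$ and the principal minor $a_S=\det(A[S])$ is the coefficient of $\prod_{k\not\in S}x_k$ in $f$, we get $\deg(a_S)\le|S|$ in $x_{n+1},\ldots,x_m$. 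Taking $|S|=1$ forces $\deg(A_{ii})\le 1$; for $|S|=2$, the identity $a_{\{i,j\}}=A_{ii}A_{jj}-A_{ij}\overline{A_{ij}}$, combined with the observation that the leading homogeneous form of the Hermitian square $A_{ij}\overline{A_{ij}}$ is nonzero whenever $A_{ij}\neq 0$, forces $\deg(A_{ij})\le 1$. Since each $x_j$ is fixed by the involution, the Hermitian condition on $A$ decomposes coefficient-by-coefficient into the Hermitian condition on each of $A_0,A_{n+1},\ldots,A_m$.

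\emph{Main obstacle.} The principal technical hurdle is enforcing Assumptions~\ref{assumptions}, namely irreducibility of $f$ and of each $\partial f/\partial x_i$. Reducible $f$ can be handled by factoring $f=\prod_\ell f_\ell$ in $R[x_1,\ldots,x_n]$ and using Lemma~\ref{lem:factorDelta} (after verifying that a Hermitian-square factorization of $\Delta_{ij}(f)$ descends to Hermitian-square factorizations of each $\Delta_{ij}(f_\ell)$), producing a Hermitian representation for each factor by induction, and assembling a block-diagonal matrix (permuting rows and columns to restore the order of $x_1,\ldots,x_n$). When some $\partial f/\partial x_i$ fails to be irreducible, I would apply a sufficiently generic $\gamma\in\SL_2(\F)^n$: Corollary~\ref{cor:SL2onDelta} shows the Hermitian-square hypothesis is $\SL_2(\F)^n$-invariant, so the algorithm yields a Hermitian representation of $\gamma\cdot f$, and Corollary~\ref{cor:Invariance} translates this back to a Hermitian representation of $f$.
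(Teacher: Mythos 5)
Your overall strategy tracks the paper's quite closely: the forward direction via Desnanot--Jacobi and the Hermitian relation $\det(M(j,i))=\overline{\det(M(i,j))}$ is exactly right, and the reverse direction correctly routes through Corollary~\ref{cor:IrredDetRep} (to produce compatible factorizations $g_{ij}\overline{g_{ij}}$), Theorem~\ref{thm:DeltaDetRep} (to build $M$), and the adjugate identity $M=f^{2-n}G^{\rm adj}$ with $G$ Hermitian to conclude $M$ is Hermitian. Your explicit degree argument for splitting $A=A_0+\sum_{j>n}x_jA_j$ — bounding $\deg A_{ii}\le 1$ from $|S|=1$ and $\deg A_{ij}\le 1$ from $|S|=2$ via the identity $a_{\{i,j\}}=A_{ii}A_{jj}-A_{ij}\overline{A_{ij}}$ and the fact that a nonzero Hermitian square has a nonzero leading form — is a clean and correct way to make this step explicit; the paper leaves it implicit inside the degree bookkeeping of Theorem~\ref{thm:DeltaDetRep}. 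Your handling of the reducible case (Lemma~\ref{lem:factorDelta} plus a block-diagonal assembly) and of irreducibility of the partials (generic $\gamma$, invariance via Corollary~\ref{cor:SL2onDelta}, translate back via Corollary~\ref{cor:Invariance}) also matches the paper, modulo citing Lemma~\ref{lem:Action2Irred} for existence of the generic $\gamma$.

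The genuine gap is the finite field case. The theorem is stated for any field $\K$ with an order-two automorphism, so $\F$ may be finite (e.g.\ $\F=\F_p$, $\K=\F_{p^2}$). Lemma~\ref{lem:Action2Irred} requires $\F$ infinite, and a ``sufficiently generic'' $\gamma\in\SL_2(\F)^n$ making all partials of $\gamma\cdot f$ irreducible need not exist when $\F$ is finite; your argument breaks there. The paper handles this with a transcendental extension: pass to $\F(t)$ and $\K(t)$, obtain a Hermitian representation $A(t)$ over $\K(t)$ by the infinite-field argument, and then show the entries $p_{ij}/q_{ij}$ have $q_{ij}(0)\ne 0$ (using that $a_{ii}$ and $a_{ij}a_{ji}$ lie in $\F$ to derive a contradiction if $t\mid q_{ij}$), so one may specialize $t=0$. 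You would need to add some such argument to cover finite $\F$.
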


\begin{lem}\label{lem:Action2Irred}Let $\F$ be an infinite field and 
$f\in \F[x_1, \hdots, x_m]$ be irreducible, multiaffine in the variables $x_1,\hdots,x_n$ and have coefficient of $x_1\cdots x_n$ equals to $1$. Let $R = \F[x_{n+1},\hdots,x_m]$. For a generic element $\gamma \in \SL_2(\F)^n$, 
the derivatives $\frac{\partial}{\partial x_j}(\gamma \cdot f)$ are irreducible in $R[x_1,\hdots,x_n]$ for $j=1, \hdots, n$, up to a constant and the coefficient of $\prod_{i=1}^n x_i$ is nonzero.
\end{lem}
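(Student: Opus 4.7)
Plan. Both assertions define Zariski open conditions on $\gamma\in \SL_2(\F)^n$, so I will verify each as a nonempty open condition and intersect. For the coefficient of $\prod_{i=1}^n x_i$ in $\gamma\cdot f$: write $f=\sum_{S\subseteq[n]}c_S\prod_{i\in S}x_i$ with $c_S\in R$, and unpack the action formula from Section~\ref{sec:Background} to obtain
\[
\mathrm{coeff}\!\left(\gamma\cdot f,\ \prod_{i=1}^n x_i\right) \;=\; \sum_{S\subseteq[n]}c_S\prod_{i\in S}a_i\prod_{j\notin S}c_j.
\]
Evaluating at $\gamma=(I,\ldots,I)$ yields $c_{[n]}=1$, so this is a nonzero polynomial in the entries of $\gamma$ and its nonvanishing locus is open and dense.

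For the irreducibility of $\tfrac{\partial}{\partial x_j}(\gamma\cdot f)$, I would fix $j\in[n]$, decompose $\gamma=(\gamma_j,\gamma')$ with $\gamma'=(\gamma_i)_{i\neq j}$, and write $f=x_jP_j+Q_j$ where $P_j=\partial f/\partial x_j$ and $Q_j=f|_{x_j=0}$. A chain-rule computation in the spirit of the action formulas in Section~\ref{sec:Background} gives
\[
\tfrac{\partial}{\partial x_j}(\gamma\cdot f) \;=\; \gamma'\cdot (a_jP_j+c_jQ_j),
\]
where $\gamma'$ acts on polynomials multiaffine in $\{x_i:i\neq j,\ i\leq n\}$. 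Because this $\gamma'$-action is a linear change of coordinates on the multihomogenization, it preserves irreducibility in $R[x_i:i\neq j,\ i\leq n]$ up to $R$-units, so it suffices to show that $aP_j+cQ_j$ is irreducible up to $R$-constants for generic $(a,c)\in\F^2$.

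Since $f$ is irreducible with $\deg_jf=1$, any common factor of $P_j$ and $Q_j$ divides $f$ and so $\gcd(P_j,Q_j)\in \F^*$. By multiaffinity, a factorization $aP_j+cQ_j=gh$ with $g,h\notin R$ respects some partition $I\sqcup J=[n]\setminus\{j\}$ into nonempty blocks, with $g\in R[\mathbf{x}_I]$ and $h\in R[\mathbf{x}_J]$. For a fixed partition, factorability is equivalent to the $2^{|I|}\times 2^{|J|}$ coefficient matrix $M_{aP_j+cQ_j}=aM_{P_j}+cM_{Q_j}$ having rank $\leq 1$ over $K=\mathrm{Frac}(R)$, a Zariski closed condition on $(a,c)$ defined by the vanishing of $2\times 2$ minors. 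If this condition held identically on $\F^2$, the matrix pencil $\mathrm{span}(M_{P_j},M_{Q_j})$ would lie inside the rank-$\leq 1$ Segre variety; the classical description of lines on the Segre then forces $M_{P_j}$ and $M_{Q_j}$ to share a common row or column direction over $K$, yielding a common factor $q\in K[\mathbf{x}_I]$ or $q\in K[\mathbf{x}_J]$ of $P_j$ and $Q_j$. Because the coefficient of $\prod_{i\neq j}x_i$ in $P_j$ equals $1$, $P_j$ involves every variable in $\mathbf{x}_I\cup\mathbf{x}_J$; this forces $q$ to have positive degree in some $x_i$, and clearing denominators via Gauss's lemma produces a non-unit common factor of $P_j$ and $Q_j$ in $\F[\mathbf{x}_{\neq j}]$, contradicting $\gcd(P_j,Q_j)\in \F^*$. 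Thus for each of the finitely many partitions the reducibility locus is a proper closed subset of $\F^2$; taking the finite union over partitions and over $j\in[n]$, and intersecting with the coefficient nonvanishing condition, produces the required dense open subset of $\SL_2(\F)^n$. The main obstacle is this Segre-line/matrix-pencil argument, as it requires careful interplay between $R$ and its fraction field $K$ to invoke the classical rank-pencil classification over a field.
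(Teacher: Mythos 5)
Your proof is correct and follows the same overall skeleton as the paper's (reduce to the irreducibility of $aP_j+cQ_j$ for generic $(a,c)\in\F^2$ via the identity $\partial_j(\gamma\cdot f)=\gamma'\cdot(a_jP_j+c_jQ_j)$, then show for each partition $I\sqcup J$ of $[n]\setminus\{j\}$ that the reducibility locus is a proper Zariski-closed subset of $\F^2$). Where you genuinely diverge is in the mechanism for extracting a contradiction when the closed locus is all of $\F^2$. The paper notes that $\mathcal{X}_{\bf e}=\F^2$ forces $\Delta_{ik}(aP_j+cQ_j)$ to vanish for all $(a,c)$, hence $\Delta_{ik}(f)\equiv 0$ (taking $c=1$ and letting $a$ vary, using $|\F|=\infty$), and then invokes Proposition~\ref{prop:DeltaZero} to conclude $f$ is reducible; this cleanly recycles the paper's own earlier proposition. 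You instead pass to the coefficient pencil $aM_{P_j}+cM_{Q_j}$ over $K=\mathrm{Frac}(R)$, observe it would be a line contained in the rank-$\leq 1$ Segre variety, invoke the classical classification of lines in $\P^{m-1}\times\P^{n-1}$ to force a common row/column direction, and translate back to a common non-unit factor of $P_j$ and $Q_j$ over $R$ via Gauss's lemma — contradicting $\gcd(P_j,Q_j)\in\F^*$. Both are correct; the paper's route is more economical within the paper's own framework, while yours is a self-contained linear-algebraic argument at the price of citing the Segre-line (rank-$\leq 1$ pencil) classification. Note also that the paper gets Zariski-closedness of $\mathcal{X}_{\bf e}$ from the projective elimination theorem applied to the multiplication map, whereas you get it from the explicit $2\times 2$-minor equations of the coefficient matrix; these are the same Segre variety viewed from two angles.

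One small imprecision: you assert that the $\gamma'$-action "preserves irreducibility in $R[x_i:i\neq j,\ i\leq n]$ up to $R$-units" on the grounds that it is a linear change of coordinates on the multihomogenization. That step is not automatic, because dehomogenizing can destroy irreducibility when the polynomial fails to have full multidegree (the homogenization then has spurious $y_i$-factors, which the $\SL_2$-action converts into linear factors $c_ix_i+d_i$). The cleanest fix is either (i) to observe that $aP_j+cQ_j$ has full multidegree $(1,\ldots,1)$ in $\{x_i:i\neq j,\ i\leq n\}$ whenever the coefficient of $\prod_{i\neq j,\,i\leq n}x_i$ is nonzero, which holds for generic $(a,c)$ and makes the $\gamma'$-action genuinely a linear change of coordinates on the multihomogenization with no spurious factors; or (ii) to sidestep the issue entirely, as the paper implicitly does, by noting that the reducibility locus in $\SL_2(\F)^n$ is Zariski-closed, and the special choice $\gamma'=\mathrm{id}$ already exhibits a $\gamma$ outside it, hence the complement is dense.
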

\begin{proof} 
Consider $\gamma_j = \footnotesize{\begin{pmatrix} a && b \\ c && d	\end{pmatrix}} \in \SL_2(\F)$ acting on $x_j$. 
Then $\partial_j(\gamma \cdot f)= a f_j + c f^j$ where $f = f_j x_j + f^j$.
Consider the set 
\[	\mathcal{X} = \big\{(a,c)\in \F^2: a f_j + c f^j \text{ is reducible in $R[x_1,\hdots,x_n]$ up to constants} \},
\]
Suppose that the multidegree of $f$ in $x_1, \hdots, x_m$ is given by ${\bf d}\in \N^m$. By assumption, 
${\bf d}_i =  1$ for $i=1, \hdots, n$. 
Note that $\mathcal{X}$ is contained in the union $ \bigcup_{{\bf e}}\mathcal{X}_{{\bf e}}$ where 
	\[
	\mathcal{X}_{{\bf e}} =\big\{(a,c) \in \F^2: af_j + c f^j \in \F^{\rm alg}[x_1, \hdots, x_m]_{\leq {\bf e}} \cdot \F^{\rm alg}[x_1, \hdots, x_m]_{\leq {\bf d}-{\bf e}}  \}
	\]	
and the union is taken over all vectors ${\bf e}\in \N^m$ that are coordinate-wise $\leq {\bf d}$ 
with the property that ${\bf e}_i = 1$ and ${\bf e}_k = 0$  for some $i,k\in[n]\backslash\{j\}$.  
Here $\F^{\rm alg}$ denotes the algebraic closure of $\F$. 
To see this, suppose $(a,c)\in \mathcal{X}$, meaning $af_j + c f^j = g\cdot h$ where $g,h\in R[x_1, \hdots, x_n]$ 
are not constants (i.e. elements of $R$). In particular, for some $i,k\in [n]\backslash\{j\}$, 
$\deg_{i}(g)>0$ and $\deg_k(h)>0$. Since $af_j + c f^j$ has degree at most one in each of $x_i$ and $x_k$, 
it follows that $\deg_{i}(g)=\deg_k(h)=1$ and $\deg_{k}(g)=\deg_i(h)=0$. 
Taking ${\bf e}\in \N^m$ to be the multidegree of $g$ gives $g\in \F[x_1, \hdots, x_m]_{\leq {\bf e}}$ 
and $h\in \F[x_1, \hdots, x_m]_{\leq {\bf d}-{\bf e}}$. Then $(a,c)\in  \mathcal{X}_{{\bf e}}$.

By the projective elimination theorem, the image $\F^{\rm alg}[x_1, \hdots, x_m]_{\leq {\bf e}} \cdot \F^{\rm alg}[x_1, \hdots, x_m]_{\leq {\bf d}-{\bf e}}$ is Zariski-closed in the vectorspace $\F^{\rm alg}[x_1, \hdots, x_m]_{\leq {\bf d}}$. Intersecting with the 
$\F$-subspace spanned by $\{f_j,f^j\}$ shows that  $\mathcal{X}_{{\bf e}}$ and hence $\cup_{\bf e}\mathcal{X}_{{\bf e}}$ is Zariski-closed in $(\F)^2$. Therefore this union is either all of $\F^2$ or is an algebraic set of codimension $\geq 1$. 
Suppose, for the sake of contradiction, that it is all of $\F^2$. 
Then there exists some ${\bf e}$ for which $\mathcal{X}_{{\bf e}}= \F^2$. 
By assumption, there are $i,k\in [n]$ so that ${\bf e}_i =1$ and $({\bf d} - {\bf e})_k=1$. 
By  Proposition~\ref{prop:DeltaZero}, it follows that for all $a,c\in \F$, 
$\Delta_{ik}(af_j + cf^j)$ is identically zero.  For $c=1$, this corresponds to the evaluation of $\Delta_{ik}(f)$ at $x_j=a$. 
It follows that the polynomial $\Delta_{ik}(f)$ is identically zero (see e.g. \cite{AV21}).  Proposition~\ref{prop:DeltaZero} then implies that $f$ factors as the product of two nonconstant elements in $R[x_1, \hdots, x_n]$. 
\end{proof}

\begin{proof}[Proof of Theorem~\ref{thm:HerDetRep}]
	First assume that $\F$ is an infinite field and that $f$ is irreducible in $\F[x_1, \hdots, x_m]$. Let $R=\F[x_{n+1},\hdots,x_m]$.
	By Lemma~\ref{lem:Action2Irred}, 
	there exists a generic $\gamma\in  \SL_2(\F)^n$ such that the partial derivatives 
	of $\gamma\cdot f$ are all irreducible in $R[x_1, \hdots, x_n]$ and the coefficient of $x_1 \cdots x_n$ in $\gamma\cdot f$ is nonzero. Then by Corollary~\ref{cor:IrredDetRep}, there exists $\{g_{ij}\}_{ 1\leq i\neq  j\leq n}$ with $g_{ij}$ in $\K[x_\ell:\ell\neq i,j]$ satisfying $\Delta_{ij}(\gamma\cdot f) = g_{ij} g_{ji}$, $g_{ji} = \overline{g_{ij}}$,  and ${\rm res}_{x_k}(g_{ij},\gamma\cdot f) = g_{ik} g_{kj}$ for all distinct $ i,j,k\in [n]$. Acting by $\gamma^{-1}$ on $\gamma \cdot f$ and using Proposition~\ref{prop:SL2_on_res} we get $h_{ij} = \gamma^{-1}\cdot g_{ij}$ such that $\Delta_{ij}(f) = h_{ij} \overline{h_{ij}}$ and ${\rm res}_{x_k}(h_{ij},f) = h_{ik} h_{kj}$ and thus using Theorem~\ref{thm:DeltaDetRep} we get a determinantal representation of $f$ over $\K$ and since $h_{ji} = \overline{h_{ij}}$, the matrix will be Hermitian.
	
	Now suppose that $f$ is reducible. Let $g$ be an irreducible factor of $f$ with $\deg_i(g) =1$ for $i\in I\subseteq [n]$ 
	and $\deg_j(g) = 0$ for $j\in [n]\backslash I$  
	where the coefficient of $\prod_{i\in I}x_i$ equals one. 
	By Lemma~\ref{lem:factorDelta}, for every $i,j\in I$,  $\Delta_{ij}(g)$ is a Hermitian square. 
	Therefore $g$ has a determinantal representation of the correct form,
	$g = \det({\rm diag}(x_i : i\in I) + \sum_{j=n+1}^mx_jA_{ij} + A_{i0})$. 
	Taking a block diagonal representation of these representations (and permuting the rows and columns 
	to reorder $x_1, \hdots, x_n$) gives a determinantal representation for $f$. 
	
	Now suppose that $\F$ is a finite field. Consider the transcendental extension of $\F$ to $\F(t)$ and of $\K$ to $\K(t)$. Then by the arguments above,  $f  = \det\left(\diag{x_1,\hdots,x_n}+A(t)\right)$ for some Hermitian matrix $A(t) \in {\rm Mat}_n(\K(t))$. 
	The $(i,j)$th entry of $A(t)$ can be written as  $a_{ij} = \frac{p_{ij}}{q_{ij}}$ where $p_{ij},q_{ij} \in \K[t]$ are relatively prime and 
	the polynomial $q_{ij}$ is nonzero. 
	Specializing to $t=0$ will give a determinantal representation of $f$ over $\K$. 
	To do this, we need to check that $q_{ij}(0)$ is nonzero for all $i,j$.
	If $a_{ij}=0$, then we can take $p_{ij}=0$ and $q_{ij}=1$. Suppose that for some $i,j\in [n]$, $p_{ij}$ is nonzero and  
	$q_{ij}(0) = 0$. Then $t$ divides $q_{ij}$ and so also divides $\overline{q_{ij}}$. Notice that
	\[
	a_{ii} = {\rm coeff}(f,\prod_{k\neq i} x_k) \ \  \text{ and } \ \  a_{ij} a_{ji} =  a_{ii} a_{jj} - {\rm coeff}\left(f,\prod_{k\neq i,j} x_k\right) 
	\]
	are both in $\F$ and hence $p_{ij} \overline{p_{ij}} = r q_{ij} \overline{q_{ij}}$ for some $r \in \F^*$. We get the desired contradiction by noticing that $t^2$ divides the left-hand side of the equation, while it does not divide the right-hand side since $p_{ij}$ and $q_{ij}$ are relatively prime. Therefore we can specialize both sides of the equation $f  = \det\left(\diag{x_1,\hdots,x_n}+A(t)\right)$ to $t=0$, which gives a Hermitian determinantal representation of $f$. 
\end{proof}

\begin{example} Consider the polynomial 
$f = x_1x_2x_3 + x_1+x_2+x_3 + 1$ over the field $\F = \F_2$.
The Rayleigh difference $\Delta_{12}(f) = x_3^2+x_3+1$ does not factor in $\F_2[x_3]$, 
showing that the coefficient vector of $f$ is not in the image of $\Mat_3(\F_2)$ under the principal minor map. 

Consider the quadratic extension $\K = \F_2[\alpha]/\langle \alpha^2+\alpha+1\rangle$. 
The map $\alpha \mapsto 1+\alpha$ extends to an automorphic involution on $\K$ that fixes $\F_2$. 
Over $\K$, the Rayleigh differences factor into multiaffine polynomials, namely 
$\Delta_{ij}(f) = (x_k + \alpha)(x_k+1+\alpha)$, for distinct $i$, $j$, $k$. As then guaranteed by 
Theorem~\ref{thm:HerDetRep}, $f$ has a Hermitian determinantal representation over $\K$: 
\[
f = \det\begin{pmatrix} 
x_1 & 1+\alpha & 1+\alpha \\ 
\alpha & x_2 & 1+\alpha \\ 
\alpha & \alpha & x_3
\end{pmatrix}.
\]
\end{example}

\begin{cor}\label{cor:HerCompDetRep}
	Let $f \in \R[x_1, \hdots, x_m]$ be a polynomial of total degree $n\leq m$ that is multiaffine in 
	$x_1, \hdots, x_n$ and coefficient of $x_1\cdots x_n$ equals to one. 
	There exist Hermitian matrices $A_{n+1}, \hdots, A_m, A_0$ so that 
	\[
	f = \det\left( {\rm diag}(x_1, \hdots, x_n) +\sum_{j=n+1}^mx_jA_j +A_0\right)
	\]
	if and only if for all $i,j\in [n]$, $\Delta_{ij}(f)$ factors as $g_{ij} \overline{g_{ij}}$ for $g_{ij}\in \C[x_1, \hdots, x_m]$.
\end{cor}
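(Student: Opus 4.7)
The plan is to derive the corollary directly as the special case of Theorem~\ref{thm:HerDetRep} in which $\K = \C$ and $\F = \R$. The order-two automorphism $a \mapsto \overline{a}$ on $\C$ is complex conjugation, and its fixed field is $\R$, which matches the hypothesis $f \in \R[\mathbf{x}]$. With these identifications, the notion of Hermitian square defined in Section~\ref{sec:Background}, namely $q = g\overline{g}$ for some $g \in \K[\mathbf{x}]$, becomes a factorization $g_{ij}\overline{g_{ij}}$ with $g_{ij} \in \C[\mathbf{x}]$, which is exactly the condition appearing in the corollary. Likewise, ${\rm Her}_n(\C)$ coincides with the ordinary notion of a complex Hermitian matrix.

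Both directions are then immediate. For the forward direction, if $f$ admits the stated representation, Theorem~\ref{thm:HerDetRep} yields that each $\Delta_{ij}(f)$ is a Hermitian square in $\C[\mathbf{x}]$, i.e.\ factors as $g_{ij}\overline{g_{ij}}$ with $g_{ij}\in\C[\mathbf{x}]$. For the converse, the hypothesized factorizations make each $\Delta_{ij}(f)$ a Hermitian square, and Theorem~\ref{thm:HerDetRep} delivers Hermitian matrices $A_{n+1},\ldots,A_m,A_0 \in {\rm Her}_n(\C)$ producing the required identity. I expect no substantive obstacle; the work lies entirely in matching terminology, since all the algorithmic and structural content, including Algorithm~\ref{alg:HerMatConst}, the reduction to the irreducible case via Lemma~\ref{lem:factorDelta}, and the handling of infinite/finite base fields, has already been carried out in the more general setting of Theorem~\ref{thm:HerDetRep}.
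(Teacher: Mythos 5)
Your proposal is correct and matches the paper's (implicit) argument exactly: the corollary is stated immediately after Theorem~\ref{thm:HerDetRep} with no separate proof, precisely because it is the specialization $\K = \C$, $\F = \R$, with the automorphism being complex conjugation. The only translation needed is unpacking "Hermitian square" as $g_{ij}\overline{g_{ij}}$ with $g_{ij} \in \C[\mathbf{x}]$, which you do correctly.
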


This provides a partial converse to \cite[Corollary 4.3]{KPV15}, which states that 
if some power of a polynomial $f$ has a definite determinantal representation, then 
for all $i,j$, the Rayleigh difference $\Delta_{ij}(f)$ is a sum of squares.  
In particular, Hermitian representations of $f$ give real symmetric determinantal representations of $f^2$. 
We might hope for the following. 

\begin{conjecture}
If $f \in \R[x_1, \hdots, x_m]$ is multiaffine in $x_1, \hdots, x_n$ and coefficient of $x_1\cdots x_n$ is nonzero, 
then some power of $f$ has a definite real symmetric determinantal representation if and only if for all $i,j$, $\Delta_{ij}(f)$ 
is a sum of squares in $\R[x_1, \hdots, x_m]$. 
\end{conjecture}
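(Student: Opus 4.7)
The forward direction is \cite[Corollary 4.3]{KPV15}, so I will sketch the converse. Assume that $\Delta_{ij}(f)$ is a sum of squares in $\R[x_1,\ldots,x_m]$ for every pair $i\neq j$; the goal is to produce a positive integer $N$, a real symmetric linear pencil, and a positive constant $\lambda$ such that $\lambda f^N$ equals the determinant of the pencil and the pencil is definite in some direction.

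My first step would be to upgrade the sum-of-squares hypothesis to real stability of $f$ in $x_1,\ldots,x_n$. Since a sum of squares is nonnegative on $\R^m$, specializing $x_{n+1},\ldots,x_m$ to arbitrary real values yields a multiaffine polynomial in $x_1,\ldots,x_n$ whose Rayleigh differences are nonnegative on all of $\R^n$. Together with the nonvanishing of the coefficient of $x_1\cdots x_n$, this is precisely the strong Rayleigh condition that characterizes real stability of multiaffine polynomials in the Borcea--Br\"and\'en theory. So the first substantive conclusion is that $f$ is real stable as a polynomial in $x_1,\ldots,x_n$ with coefficients in $\R[x_{n+1},\ldots,x_m]$.

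The core difficulty is passing from real stability to a definite real symmetric representation of some power. A natural first attempt combines Corollary~\ref{cor:HerCompDetRep} with the classical complexification $H=A+\ci B \mapsto \left(\begin{smallmatrix} A & -B \\ B & A\end{smallmatrix}\right)$, which would turn a Hermitian representation of $f$ into a real symmetric representation of $f^2$. This attempt stalls immediately: Corollary~\ref{cor:HerCompDetRep} requires each $\Delta_{ij}(f)=g\overline{g}$ to be a sum of exactly two real squares, whereas the hypothesis supplies only an arbitrary sum of squares. One could try to use $\Delta_{ij}(f^N) = N f^{2N-2}\Delta_{ij}(f)$ to move to a power, but $f^N$ is no longer multiaffine and Corollary~\ref{cor:HerCompDetRep} does not directly apply. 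The more promising route I would pursue is to combine the compatibility machinery of Section~\ref{sec:MA_algebra} with an auxiliary multiaffine polynomial in doubled variables, such as $f(x_1,\ldots,x_n)\,f(x_1',\ldots,x_n')$, whose Rayleigh differences are products of sums of squares and for which Pfister-type identities may compress long sum-of-squares certificates into Hermitian squares; one would then specialize $x_i'$ to recover a representation of a power of $f$.

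The main obstacle, and the reason the statement remains a conjecture, is that it is essentially the multiaffine case of the generalized Lax conjecture. Br\"and\'en's Vámos-matroid polynomial is a multiaffine real stable polynomial with no definite linear determinantal representation at all, so any proof must genuinely exploit the freedom to take powers (or tensor constructions) and cannot be obtained by a direct application of the Hermitian theorem of this paper. I expect a successful argument to require a new construction that translates arbitrary-length sum-of-squares certificates for $\Delta_{ij}(f)$ into a single Hermitian factorization of a power of $f$, patched compatibly across all pairs $(i,j)$ in the spirit of Algorithm~\ref{alg:HerMatConst} but with additional freedom coming from the chosen power.
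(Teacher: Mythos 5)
You correctly recognize that this statement appears in the paper as a \emph{conjecture}: the paper offers no proof of it, so there is no argument against which to compare yours. Your writeup does not claim to be a proof, and as an honest survey of obstacles it is mostly accurate. The forward direction is indeed \cite[Corollary~4.3]{KPV15}, as the authors remark immediately before posing the conjecture, and you have put your finger on the central gap: Corollary~\ref{cor:HerCompDetRep} requires each $\Delta_{ij}(f)$ to be a Hermitian square, i.e.~a sum of exactly \emph{two} real squares, whereas the conjectural hypothesis supplies only an arbitrary-length sum of squares, and there is no known device that converts the latter into the former compatibly across all pairs $(i,j)$, even after replacing $f$ by a power $f^N$. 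The speculative idea of doubling variables and invoking Pfister-type identities is unverified, and the authors apparently do not know how to carry it out either.

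One correction to the way you frame the difficulty. The V\'amos polynomial is a misleading illustration of why the freedom to take powers is essential, because it fails the conjecture's \emph{hypothesis}, not just its conclusion: Br\"and\'en's obstruction in \cite{BrandenDetRep} is precisely that some Rayleigh difference $\Delta_{ij}(f)$ of the V\'amos polynomial fails to be a sum of squares, despite being globally nonnegative because $f$ is real stable. So the V\'amos example shows that the SOS assumption in this conjecture is strictly stronger than real stability --- information that your opening reduction to real stability discards, and a proof would have to retain --- but it does not demonstrate that $N>1$ is forced once the SOS hypothesis holds. A cleaner illustration that $N>1$ can be necessary is any $f$ for which some $\Delta_{ij}(f)$ is a sum of two squares but not a perfect square: by \cite{AV21}, such an $f$ has no real symmetric determinantal representation at all (so $N=1$ fails), while Corollary~\ref{cor:HerCompDetRep} provides a Hermitian one, and the real symmetric complexification of that Hermitian pencil represents $f^2$.
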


\subsection{Other multiaffine determinantal representations}\label{sec:otherMADetRep}
In this section we restrict ourselves to fields and consider the set of multiaffine determinantal polynomials of the form
%A \emph{definite determinantal representation} of $f \in \F[{\bf x}]_{\rm MA}$ is an expression  
\begin{equation}\label{eq:det}
	f({\bf x}) = \lambda \det\left( V {\rm diag}(x_1, \hdots, x_m) V^* + W\right) =\lambda \det\left(\sum_{i=1}^m x_i v_iv_i^* + W\right)
\end{equation}
for some $\lambda\in \F$, some matrix $V = (v_1, \hdots, v_m)\in \K^{n\times m}$ and some $n\times n$ Hermitian matrix $W$. 
Note that when we take $V$ to be the $n\times n$ identity matrix and $\lambda = 1$, 
this is the principal minor polynomial $f_W$. 
When $n<m$, the coefficient of $x_1\cdots x_m$ in $f$ is necessarily zero.

\begin{thm}\label{thm:GenDet}
	A polynomial $f\in \F[{\bf x}]_{\rm MA}$ has a determinantal representation \eqref{eq:det} if and only if 
	for all $i,j\in [n]$, $\Delta_{ij}f$ is Hermitian square in $\K[{\bf x}]$. 
	Moreover, one can always take a representation of size $n = \deg(f)$ in  \eqref{eq:det}. 
\end{thm}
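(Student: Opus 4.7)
The plan is to prove the two directions of the equivalence separately; the ``moreover'' size claim will then follow at no extra cost, since the sufficiency argument builds on Theorem~\ref{thm:HerDetRep}, which itself produces an $n\times n$ representation.

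\textbf{Necessity.} Given $f = \lambda\det(M)$ with $M = \sum_k x_kv_kv_k^* + W$ Hermitian, I would compute $\Delta_{ij}(f)$ directly from the adjugate identity $M\cdot\adj{M} = \det(M)\,I$. Differentiating in $x_j$ on the open set $\{\det M\neq 0\}$ and solving gives
\[
\partial_j\adj{M} = (v_j^*\adj{M}v_j)\,M^{-1} - \adj{M}\,v_jv_j^*\,M^{-1}.
\]
Substituting this together with $\partial_if = \lambda v_i^*\adj{M}v_i$ into $\Delta_{ij}(f) = \partial_if\cdot\partial_jf - f\cdot\partial_i\partial_jf$, the crossed terms cancel and I obtain the polynomial identity
\[
\Delta_{ij}(f) = \lambda^2(v_i^*\adj{M}v_j)(v_j^*\adj{M}v_i),
\]
which extends from the open set to all of $\K[{\bf x}]$. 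Since $\adj{M}$ is Hermitian and $\lambda\in\F$, $v_j^*\adj{M}v_i = \overline{v_i^*\adj{M}v_j}$, so $\Delta_{ij}(f) = p\overline{p}$ with $p = \lambda v_i^*\adj{M}v_j\in\K[{\bf x}]$, a Hermitian square.

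\textbf{Sufficiency and size.} Given that every $\Delta_{ij}(f)$ is a Hermitian square, set $n = \deg f$, pick a degree-$n$ monomial of $f$ with nonzero coefficient $\lambda\in\F$, and after relabeling assume it is $x_1\cdots x_n$. Then $\tilde f := \lambda^{-1}f$ satisfies the hypotheses of Theorem~\ref{thm:HerDetRep}, which produces Hermitian matrices $A_0, A_{n+1},\ldots,A_m\in\Mat_n(\K)$ with
\[
\tilde f = \det\!\Bigl(\diag(x_1,\ldots,x_n) + \textstyle\sum_{j>n}x_jA_j + A_0\Bigr).
\]
The diagonal block is already in rank-one PSD form, $\sum_{i\leq n}x_ie_ie_i^*$, so the remaining task is to show $A_j = u_ju_j^*$ for some $u_j\in\K^n$ for each $j>n$. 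Multiaffine-ness of $\tilde f$ in $x_j$ forces $\mathrm{rank}(A_j)\leq 1$: expanding $\det(M_0+x_jA_j) = \det(M_0)\det(I+x_jM_0^{-1}A_j)$ shows that the coefficient of $x_j^2$ is a polynomial in $x_1,\ldots,x_n$ built from $2\times 2$ minors of $A_j$ paired with complementary minors of $M_0$, whose identical vanishing forces all $2\times 2$ minors of $A_j$ to be zero. Hence $A_j = \epsilon_j u_j u_j^*$ with $\epsilon_j\in\{0,\pm 1\}$. Re-running the necessity calculation with arbitrary Hermitian coefficient matrices $B_k$ in place of $v_kv_k^*$ yields
\[
\Delta_{ij}(\tilde f) = \mathrm{tr}\bigl(\adj{\tilde M}\,B_j\,\adj{\tilde M}\,B_i\bigr),
\]
which for $i\leq n$, $B_i = e_ie_i^*$, $B_j = \epsilon_j u_j u_j^*$ reduces to $\epsilon_j\,|e_i^*\adj{\tilde M}u_j|^2$. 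If $\epsilon_j = -1$ and $u_j\neq 0$, then $\adj{\tilde M}u_j$ is a nonzero polynomial vector (as $\adj{\tilde M}$ is generically invertible), so some $e_i^*\adj{\tilde M}u_j$ is a nonzero polynomial and $\Delta_{ij}(\tilde f)$ takes strictly negative real values on real specializations, contradicting its being a Hermitian square. Therefore $\epsilon_j\in\{0,+1\}$ and $A_j = u_j u_j^*$. Combining, $\tilde f = \det(V\diag(x_1,\ldots,x_m)V^* + A_0)$ with $V = (e_1,\ldots,e_n,u_{n+1},\ldots,u_m)\in\K^{n\times m}$, and multiplying by $\lambda$ yields the representation~\eqref{eq:det} of size exactly $n = \deg f$.

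\textbf{Main obstacle.} The crux is the sign step: forcing $\epsilon_j = +1$ from the Hermitian-square hypothesis. It relies on the fact that Hermitian squares in $\K[{\bf x}]$ take values in the image of the norm map $\K\to\F$, $a\mapsto a\overline{a}$—for $(\F,\K)=(\R,\C)$ this amounts to nonnegativity on real specializations—which rules out the negatively-signed rank-one coefficients once multiaffine-ness has already forced the rank bound. Every other step is a direct application of Theorem~\ref{thm:HerDetRep} and the adjugate calculus developed in the necessity direction, and no enlargement beyond size $n = \deg f$ is required.
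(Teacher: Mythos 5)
Your necessity argument via the adjugate identities is correct and gives a clean, computational alternative to the paper's route: the paper reduces to the case $v_1=e_1$, $v_2=e_2$ via a change of basis (or handles linear dependence separately) and then cites the Desnanot--Jacobi/Dodgson identity, whereas you arrive directly at the polynomial identity $\Delta_{ij}(f)=\lambda^2 (v_i^*\,\adj{M}\,v_j)(v_j^*\,\adj{M}\,v_i)$, which is the same fact expressed through the adjugate. Your sufficiency outline (invoke Theorem~\ref{thm:HerDetRep}, then show each $A_j$ equals $v_jv_j^*$) also matches the paper's skeleton.

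However, the sign step has a genuine gap for the level of generality the theorem is stated at. You set $A_j=\epsilon_j u_ju_j^*$ with $\epsilon_j\in\{0,\pm 1\}$, which already presupposes $\F=\R$: over a general fixed field $\F$ a rank-one Hermitian matrix is $\mu_j u_ju_j^*$ with $\mu_j\in\F^*$, and the content of the lemma is exactly that $\mu_j$ lies in the norm group $N(\K^*)$. Your justification---evaluate $\Delta_{ij}(\tilde f)=\mu_j\,p\overline p$ at a point of $\F^m$ where $p\neq 0$ and observe the value falls outside $N(\K)$---requires $\F$ to be large enough to contain such a point and, more fundamentally, is a pointwise argument that does not automatically give a polynomial-level conclusion (for finite $\F$ no such point need exist, and even for infinite $\F$ the statement ``$\mu p\overline{p}$ a Hermitian square implies $\mu$ a norm'' when $p$ is a general polynomial is a Cassels--Pfister-type result, not an elementary UFD computation). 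The paper sidesteps this entirely by homogenizing and specializing down to $f=\det(\diag(x_1,\hdots,x_d)+x_{d+1}A)$, in which case $\Delta_{j(d+1)}(f)=A_{jj}\bigl(\prod_{i\neq j}x_i\bigr)^2$; the square factor is a \emph{monomial}, i.e.\ a product of distinct conjugation-fixed primes, so elementary divisibility in the UFD $\K[\mathbf{x}]$ shows $A_{jj}=\alpha_j\overline{\alpha_j}$, and from there $\mu_j\in N(\K^*)$ follows by ratio. That reduction is what makes the argument field-independent, and your proposal is missing it. (The rank-one step also glosses the Principal Minor Theorem: vanishing of the $x_j^2$ coefficient directly kills only the \emph{principal} $2\times 2$ minors, and one needs PMT, as in Lemma~\ref{lem:rank}, to pass to ${\rm rank}(A_j)\le 1$; but that is a minor omission compared to the sign step.)
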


\begin{proof}
	($\Rightarrow$) 
	Without loss of generality, we show that $\Delta_{12}(f)$ is a Hermitian square. 
	First suppose $v_1$ and $v_2$ are linearly dependent, i.e.~let $v_1 = \alpha v_2$ for some $\alpha\in \K$. 
	Then $v_1v_1^* = \alpha \overline{\alpha} v_2v_2^*$ and $f(x_1, \hdots, x_m) = f(0, \alpha \overline{\alpha}  x_1 +x_2, x_3, \hdots, x_m)$. Taking partial derivatives shows that $\frac{\partial f}{\partial x_1} =  \alpha \overline{\alpha} \frac{\partial f}{\partial x_2}$ and that $\frac{\partial^2 f}{\partial x_1\partial x_2} = 0$.  Then $\Delta_{12}(f) = (\alpha\frac{\partial f}{\partial x_2})( \overline{\alpha}\frac{\partial f}{\partial x_2})$ is a Hermitian square. 
	
	If $v_1$ and $v_2$ are linearly independent, 
	then there is an invertible matrix $U \in \K^{n\times n}$ with $Uv_1 = e_1$ and 
	$Uv_2 = e_2$. 
	Then 
	\[
	|\det(U)|^2f = 
	\lambda\det\left(
	U\left(\sum_{i=1}^m x_iv_iv_i^* + W\right)U^{*}\right)
	= \lambda\det\left(
	{\rm diag}(x_1,x_2, {\bf 0}) + 
	\sum_{i=3}^m x_i\widetilde{v_i}\widetilde{v_i}^*  + \widetilde{W}\right).
	\]
	where $\widetilde{v}_i = Uv_i$ and $\widetilde{W} = UWU^{*}$.
	These matrices are still Hermitian and so by equation \eqref{eq:dodgson}, $\Delta_{12}(f)$ is Hermitian square.  
	
	($\Leftarrow$)
	Let $d = \deg(f)$. 
	We can assume, without loss of generality, that the coefficient  of $x_1\cdots x_d$ in $f$ is nonzero. 
	Moreover since the set of polynomials of the form  \eqref{eq:det} is invariant under scaling, we can assume
	that this coefficient equals one. 
	By Theorem~\ref{thm:HerDetRep}, there are Hermitian matrices $A_0, A_{d+1}, \hdots, A_{m}$ 
	so that
	\[f = \det\left({\rm diag}(x_1, \hdots, x_d) + \sum_{j=d+1}^m x_jA_j + A_0\right).\]  
	We take $W = A_{0}$. 
	By Lemma~\ref{lem:rank} below, for every $k=n+1, \hdots, m$, the rank of $A_k$ equals the degree of $f$ in $x_k$, which is one. 
	It remains to show that the matrix $A_k$ has the form $v_kv_k^*$ for some $v_k\in \K^d$. 
 
	By homogenizing and specializing variables to zero, it suffices to consider polynomials 
	of the form $f = \det\left({\rm diag}(x_1, \hdots, x_d) + x_{d+1}A\right)$ where $A\in \K^{n\times n}$ is Hermitian and rank-one. 
	Then $f = \prod_{i=1}^dx_i + \sum_{j=1}^dA_{jj} \prod_{i\in [d]\backslash\{j\}}x_i$, where $A_{jj}$ is the $j$th entry of $A$. 
	Then for $j=1, \hdots, d$, 
	\[
	\Delta_{j(d+1)}(f) = f_j^{d+1}f_{d+1}^j - f_{j(d+1)}f^{j(d+1)}  
	= \left(\prod_{i=1}^d x_i\right)\left(A_{jj}\prod_{i\in [d]\backslash\{j\}} x_i\right)
	= A_{jj}\left(\prod_{i\in [d]\backslash\{j\}} x_i\right)^2.
	\]
	By assumption, $\Delta_{j(d+1)}(f)$ is a Hermitian square, and so we see that $A_{jj} = \alpha_{j}\overline{\alpha_{j}}$ for 
	some $\alpha_{j}\in \K$. Since $A$ has rank one, we can write it as $\lambda uu^*$ for some $\lambda\in \F^*$ and $u\in \K^n$.
	If $u_j\neq 0$, then $\lambda u_j\overline{u_j} = \alpha_j \overline{\alpha_j}$, meaning that 
	$\lambda = \beta \overline{\beta}$ for $\beta = \alpha_j/u_j$. It follows that $A = vv^*$ for $v = \beta u$. 
\end{proof}

\begin{lem}\label{lem:rank} 
If $f = \det({\rm diag}(x_1,\hdots,x_n) + \sum_{j=n+1}^m x_j A_j+A_0)$ where $A_j\in \K^{n\times n}$ are Hermitian. 
Then the rank of $A_j$ equals the degree $f$ in the variable $x_j$. 
\end{lem}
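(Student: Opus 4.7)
The plan is to bound $\deg_{x_j}(f)$ above and below by $r := \operatorname{rank}(A_j)$. Set $M_0 = \operatorname{diag}(x_1, \ldots, x_n) + A_0 + \sum_{k > n, \, k \neq j} x_k A_k$, so that $f = \det(M_0 + x_j A_j)$. The upper bound $\deg_{x_j}(f) \leq r$ is classical and does not use Hermitian-ness: factoring $A_j = BC$ with $B \in \K^{n \times r}$ and $C \in \K^{r \times n}$ of rank $r$, the matrix determinant lemma applied over the rational function field $\K(x_i : i \neq j)$ gives
\[
\det(M_0 + x_j A_j) \;=\; \det(M_0) \cdot \det\!\left(I_r + x_j\, C\, M_0^{-1} B\right),
\]
a polynomial of degree at most $r$ in $x_j$. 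Here $\det(M_0)$ is a nonzero polynomial because its top-degree part in $x_1, \ldots, x_n$ is $\prod_i x_i$.

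The lower bound is where Hermitian-ness enters. Writing $N[S, T]$ for the submatrix of $N$ on rows $S$ and columns $T$, expanding the determinant by multilinearity in rows and then Laplace expanding along the rows indexed by $S$ gives
\[
\det(M_0 + x_j A_j) \;=\; \sum_{k = 0}^n x_j^k \sum_{\substack{S, T \subseteq [n] \\ |S| = |T| = k}} (-1)^{\sigma(S) + \sigma(T)} \det(A_j[S, T]) \cdot \det(M_0[\bar S, \bar T]),
\]
where $\sigma(S) = \sum_{s \in S} s$. The coefficient of $x_j^r$ is therefore
\[
c_r \;=\; \sum_{|S| = |T| = r} (-1)^{\sigma(S) + \sigma(T)} \det(A_j[S, T]) \cdot \det(M_0[\bar S, \bar T]) \;\in\; \K[x_i : i \neq j],
\]
and the aim is to show $c_r \neq 0$. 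Regarded as a polynomial in $x_1, \ldots, x_n$, each factor $\det(M_0[\bar S, \bar T])$ has degree at most $n - r$, since these variables appear in $M_0$ only as diagonal entries. Its degree-$(n-r)$ homogeneous part equals $\delta_{S, T} \prod_{i \in \bar S} x_i$: picking $n - r$ powers of the $x_i$'s in the determinantal expansion forces selecting the diagonal entry from every row, which in turn forces $\bar S = \bar T$. Thus
\[
c_r \;=\; \sum_{|S| = r} \det(A_j[S, S]) \prod_{i \in \bar S} x_i \;+\; (\text{lower-order terms in } x_1, \ldots, x_n),
\]
and since the monomials $\prod_{i \in \bar S} x_i$ are linearly independent, $c_r$ is nonzero as soon as some principal $r \times r$ minor $\det(A_j[S, S])$ is nonzero.

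The remaining step, and the main obstacle, is the classical claim that a Hermitian matrix of rank $r$ always has an invertible $r \times r$ principal submatrix. The Hermitian form $B(u, v) = u^* A_j v$ on $\K^n$ has radical $\operatorname{rad}(B) = \ker(A_j)$, of dimension $n - r$. Arranging a basis of $\ker(A_j)$ as the rows of an $(n - r) \times n$ matrix $V$ of rank $n - r$, one selects a set $\bar S \subseteq [n]$ of size $n - r$ for which the $(n - r) \times (n - r)$ submatrix on columns $\bar S$ is invertible. A short check then shows that $\operatorname{span}\{e_i : i \in S\}$ meets $\ker(A_j)$ trivially and, by dimension count, is a direct complement of $\operatorname{rad}(B)$ in $\K^n$. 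The restriction of $B$ to this complement is nondegenerate: for $u$ in the span, vanishing of $B(u, \operatorname{span}\{e_i : i \in S\})$ together with the automatic vanishing of $B(u, \operatorname{rad}(B))$ forces $u \in \operatorname{rad}(B) \cap \operatorname{span}\{e_i : i \in S\} = \{0\}$. Nondegeneracy of $B$ on this span is exactly invertibility of $A_j[S, S]$, which supplies the required nonzero principal minor and closes the proof.
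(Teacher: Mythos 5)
Your proof is correct, and it differs from the paper's in a substantive way that is worth noting. Both arguments reduce the lower bound $\deg_{x_j}(f)\geq r$ to the existence of a nonzero $r\times r$ \emph{principal} minor of the Hermitian matrix $A_j$. The paper gets there by first specializing $x_k=0$ for $k\in\{n+1,\ldots,m\}\setminus\{j\}$ and $A_0=0$, then observing that $\det(\operatorname{diag}(\mathbf{x})+x_jA_j)$ is the homogenization (in $x_j$) of the principal minor polynomial $f_{A_j}=\sum_S (A_j)_S\,\mathbf{x}^{[n]\setminus S}$, so its $x_j$-degree is the size of the largest nonzero principal minor; and finally the paper cites the Principal Minor Theorem of Kodiyalam, Lam, and Swan for the fact that a Hermitian matrix has a nonzero principal minor of size equal to its rank. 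You avoid the specialization entirely: expanding $\det(M_0+x_jA_j)$ by multilinearity and isolating the coefficient $c_r$ of $x_j^r$, you compute that the top-degree piece of $c_r$ in $x_1,\ldots,x_n$ is $\sum_{|S|=r}\det(A_j[S,S])\prod_{i\in\bar S}x_i$, so nonvanishing again reduces to the PMT. Your additional contribution is a self-contained proof of the PMT via the Hermitian form $B(u,v)=u^*A_jv$: you build a coordinate complement of $\operatorname{rad}(B)=\ker(A_j)$ spanned by standard basis vectors and observe that nondegeneracy of $B$ on this complement is exactly invertibility of the corresponding principal submatrix. This makes the argument more elementary and independent of the external reference, at the cost of being somewhat longer. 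The paper's route is shorter once the reference is accepted and makes the pleasant connection between $x_j$-degree and homogenization degree explicit; your route works directly with the coefficient expansion and is arguably more transparent about where Hermitian-ness is used.
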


\begin{proof}
The bound $\deg_j(f)\leq {\rm rank}(A_j)$ follows from the Laplace expansion of the determinant. 
To see equality, it suffices to take $j=m=n+1$ and $A_0=0$. 
Let $f_A$ be the polynomial $f_A = \det({\rm diag}(x_1,\hdots,x_n) + A)$  where $A\in \K^{n\times n}$ is Hermitian. 
Then $f = \sum_{S\subseteq [n]}A_S{\bf x}^{[n]\backslash S}y^{|S|}$ equals the homogenization of $f_A$. 
From this we see that the degree of $f$ in the variable $y$ equals the 
size of the largest nonzero \emph{principal} minor of $A$. 
By the so-called Principal Minor Theorem \cite[Strong PMT 2.9]{KodiyalamSwan08}, 
this coincides with the size of the largest nonzero minor of $A$, i.e. ${\rm rank}(A)$. 
Therefore for a general polynomial $f = \det({\rm diag}(x_1,\hdots,x_n) + \sum_{j=n+1}^m x_j A_j+x_0A_0)$, 
the restriction to $x_k=0$ for $k\in \{n+1, \hdots, m\}\backslash \{j\}$ and $x_0=0$ has degree ${\rm rank}(A_j)$ in $x_j$, 
showing that $\deg_j(f)\geq {\rm rank}(A_j)$
\end{proof}

This immediately gives the invariance of the set of determinantal polynomials. 

\begin{cor}
	The set of polynomials in $\F[{\bf x}]_{\rm MA}$ with a determinantal representation \eqref{eq:det} is 
	invariant under the action of $\SL_2(\F)^n \rtimes S_n$. 
\end{cor}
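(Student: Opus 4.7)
The plan is to reduce the statement directly to Theorem~\ref{thm:GenDet}, which identifies the set of multiaffine polynomials admitting a representation \eqref{eq:det} with the set of multiaffine $f\in\F[{\bf x}]$ for which every Rayleigh difference $\Delta_{ij}(f)$ is a Hermitian square in $\K[{\bf x}]$. It therefore suffices to show that the property ``$\Delta_{ij}(f)$ is a Hermitian square for every $i,j\in[n]$'' is preserved under the action of $\SL_2(\F)^n\rtimes S_n$ on $\F[{\bf x}]_{\rm MA}$.

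For the $\SL_2(\F)^n$ factor, I would invoke Corollary~\ref{cor:SL2onDelta}: for $\gamma\in\SL_2(\F)^n$ acting by a nontrivial matrix in a single coordinate $k$, one has $\Delta_{ij}(\gamma\cdot f)=\Delta_{ij}(f)$ when $k\in\{i,j\}$ and $\Delta_{ij}(\gamma\cdot f)=\gamma\cdot\Delta_{ij}(f)$ otherwise; a general element of $\SL_2(\F)^n$ is a composition of these, so it is enough to treat these two cases. The first case is immediate. For the second, suppose $\Delta_{ij}(f)=g\overline{g}$ with $g\in\K[{\bf x}]$. Because the $\SL_2$-action on a product of polynomials equals the product of the actions, provided one is careful to interpret it in the correct multidegree (the multidegree of $\Delta_{ij}(f)$ equals that of $g$ plus that of $\overline{g}$), we get $\gamma\cdot(g\overline{g})=(\gamma\cdot g)(\gamma\cdot\overline{g})$. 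Moreover, since $\gamma$ has entries in the fixed field $\F$, the involution $a\mapsto\overline{a}$ commutes with the action of $\gamma$: $\overline{\gamma\cdot g}=\gamma\cdot\overline{g}$. Hence $\gamma\cdot\Delta_{ij}(f)=(\gamma\cdot g)\,\overline{(\gamma\cdot g)}$ is again a Hermitian square.

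For the $S_n$ factor, the action simply permutes the variables, so if $\sigma\in S_n$ and $\Delta_{ij}(f)=g\overline{g}$, then $\Delta_{\sigma(i)\sigma(j)}(\sigma\cdot f)=\sigma\cdot\Delta_{ij}(f)=(\sigma\cdot g)\,\overline{(\sigma\cdot g)}$, which is still a Hermitian square. Combining the two pieces, every element of $\SL_2(\F)^n\rtimes S_n$ preserves the condition, and applying Theorem~\ref{thm:GenDet} to $\gamma\cdot f$ yields a representation of the form \eqref{eq:det} for $\gamma\cdot f$.

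The only mild subtlety worth flagging is the bookkeeping around the degree-dependent $\SL_2$-action: one must check that the action on the product $g\overline{g}$, viewed in the appropriate multidegree, matches the product of the actions on $g$ and $\overline{g}$. This is the same compatibility already used in Proposition~\ref{prop:SL2_on_res} (and visible from the homogenization interpretation, where the action is a linear change of coordinates), so no new ideas are required. All remaining steps are immediate applications of results already proved in the paper.
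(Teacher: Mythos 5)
Your proposal is correct and takes essentially the same route as the paper, which proves the corollary in two lines using Corollary~\ref{cor:SL2onDelta} together with the observation that $\Delta_{ij}(\gamma\cdot f)=(\gamma\cdot g)\overline{(\gamma\cdot g)}$ when $\Delta_{ij}(f)=g\overline{g}$. Your version simply spells out the $S_n$ component, the compatibility of the degree-dependent $\SL_2$-action with products, and the fact that the involution commutes with $\gamma$ because $\gamma$ has entries in $\F$, all of which are left implicit in the paper.
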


\begin{proof} By Corollary~\ref{cor:SL2onDelta}, for any $\gamma\in \SL_2(\F)^n$, $ \Delta_{ij}(\gamma\cdot f)= \gamma\cdot \Delta_{ij}(f)$. If $\Delta_{ij}( f)$ is a Hermitian square $g\overline{g}$ with $g\in \K[{\bf x}]$ then so is $ \Delta_{ij}(\gamma\cdot f) = (\gamma\cdot g) \overline{(\gamma\cdot g)}$.
\end{proof}

\section{Determinantal Stable Polynomials}\label{sec:StableDetRep}
In this section we consider polynomials over $\R$ and $\C$ and show that 
any real stable multiaffine polynomial with a complex linear determinantal representation 
has a definite Hermitian determinantal representation (Theorem~\ref{thm:DetStableHerm}). Moreover, if the original polynomial is 
irreducible, then the matrix is diagonally similar to a Hermitian one (Theorem~\ref{thm:HermitianDiagScaling}).

We build up to the proofs of these statements with a series of useful lemmas. 

\begin{lem}\label{lem:Action2Irred2}
	Let $f\in \R[x_1, \hdots, x_m]$ be multiaffine in the variables $x_1,\hdots,x_n$ for some $n\leq m$ with coefficient of $x_1\cdots x_n$ equals to one. 
	If $f$ is irreducible, then for a generic element $\gamma \in \SL_2(\R)^n$, 
	$\partial^S(\gamma \cdot f)$ is irreducible for every $S \subset [n]$. 
\end{lem}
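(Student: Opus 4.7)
The plan is to prove this by induction on $|S|$, using Lemma~\ref{lem:Action2Irred} as both the base case and the driving tool for the inductive step. It is cleanest to pass to the multihomogenization $F := f^{{\bf 1}\text{-hom}}$, which is irreducible (since $f$ is irreducible with $\deg_{x_i}(f) = 1$ for each $i\in [n]$, no $y_i$ divides $F$) and is multilinear in each pair $(x_i, y_i)$ for $i \in [n]$. The group $\SL_2(\R)^n$ acts on $F$ by the linear change of coordinates $\gamma_i:(x_i, y_i)\mapsto (a_ix_i + b_iy_i,\, c_ix_i + d_iy_i)$, and an iterated chain rule (using that the operators $D_i$ defined below commute because they involve disjoint variables) yields
\[
\partial^S(\gamma \cdot F) \,=\, \bigl(\textstyle\prod_{i \in S} D_i F\bigr) \circ \gamma, \qquad \text{where } D_i := a_i\partial_{x_i} + c_i\partial_{y_i}.
\]
Since $\gamma$ is an invertible change of pair-coordinates and factorizations of multilinear forms split along disjoint subsets of pair-variables (the multilinear analog of Lemma~\ref{lem:factorDelta}), the polynomial $\partial^S(\gamma \cdot f)$ is irreducible if and only if $H_S := \prod_{i \in S} D_i F$ is irreducible as a multilinear form in the remaining pair-variables. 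It therefore suffices to show that for generic $(a_i, c_i)_{i \in S}$, the multilinear polynomial $H_S$ is irreducible.

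The cases $|S| = 0, 1$ are immediate: $H_\emptyset = F$ is irreducible, and irreducibility of $H_{\{j\}}$ for generic $(a_j, c_j)$ is exactly Lemma~\ref{lem:Action2Irred} applied with $\gamma_i = I$ for $i \neq j$, combined with the equivalence between irreducibility of $D_j F$ and of its dehomogenization $\partial_{x_j}(\gamma_j \cdot f)$. For the inductive step, fix $S$ with $|S| \geq 2$, choose any $j \in S$, and assume that $G := H_{S \setminus \{j\}}$ is irreducible for generic $(a_i, c_i)_{i \in S \setminus \{j\}}$. Write $G = A\, x_j + B\, y_j$ with $A = \partial_{x_j}G$ and $B = \partial_{y_j}G$ independent of $(x_j, y_j)$; irreducibility of $G$ forces $\gcd(A, B) = 1$. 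Thus $H_S = D_jG = a_jA + c_jB$ is a pencil of multilinear forms with coprime generators, and the goal is to show this pencil is generically irreducible. To do so, I dehomogenize: let $g := G|_{y_\ell = 1}$, an irreducible multiaffine polynomial (after rescaling so its leading coefficient equals $1$), and apply Lemma~\ref{lem:Action2Irred} to $g$ in the variable $x_j$. Expanding the definition of the action shows $\partial_{x_j}(\gamma_j \cdot g) = a_j A|_{y=1} + c_j B|_{y=1} = H_S|_{y=1}$, so $H_S|_{y=1}$ is irreducible for generic $(a_j, c_j)$. Lifting back to the multilinear setting, $H_S$ itself is irreducible.

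Intersecting the finitely many non-empty Zariski-open conditions, one for each $S \subsetneq [n]$, yields a single non-empty Zariski-open subset of $\SL_2(\R)^n$ on which all $\partial^S(\gamma \cdot f)$ are simultaneously irreducible. The main technical obstacle I foresee is the bookkeeping needed to verify that the ``nonzero coefficient of the top monomial'' hypothesis of Lemma~\ref{lem:Action2Irred} survives each inductive step; this can be handled by a parallel induction showing that the relevant coefficient of $G$ is a nonzero polynomial in the accumulated parameters $(a_i, c_i)_{i \in S \setminus \{j\}}$, hence generically nonzero.
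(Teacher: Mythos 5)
Your proposal is correct and follows essentially the same strategy as the paper: intersect the finitely many Zariski-open ``$\partial^S(\gamma\cdot f)$ is irreducible'' conditions, and show each is nonempty by induction on $|S|$, with the inductive step driven by Lemma~\ref{lem:Action2Irred} and the observation that the $\SL_2$ action in coordinate $i$ commutes with the differential operator $\partial^{S\setminus\{i\}}$. The paper records this commutation directly (``$\widetilde\gamma$ commutes with $\partial^{S\setminus\{i\}}$ since $\widetilde\gamma$ acts as the identity in the coordinates indexed by $S\setminus\{i\}$'') and then applies Lemma~\ref{lem:Action2Irred} to $\partial^{S\setminus\{i\}}(\gamma\cdot f)$; your version packages the same fact into the operators $D_i = a_i\partial_{x_i}+c_i\partial_{y_i}$ via multihomogenization and the identity $\partial^S(\gamma\cdot F)=\bigl(\prod_{i\in S}D_iF\bigr)\circ\gamma$. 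That is a slightly heavier formalization but buys nothing new, since the dehomogenization step then reduces back to exactly the paper's computation. The one loose end you flag --- tracking that the top coefficient remains nonzero so that Lemma~\ref{lem:Action2Irred} applies at each stage --- is handled identically in the paper, which notes in passing that the coefficient of $\prod_{j\in([n]\setminus S)\cup\{i\}}x_j$ is nonzero as part of the inductive output of Lemma~\ref{lem:Action2Irred}, so your ``parallel induction'' sketch is the right resolution.
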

\begin{proof}
	For each $S\subset [n]$, the set of $\gamma\in  \SL_2(\R)^n$ for which 
	$\partial^S(\gamma\cdot f)$ is irreducible is 
	Zariski-open. Therefore it suffices to show that this set is nonempty for each $S\subset [n]$. 
	Then the intersection of these nonempty, Zariski-open sets will be nonempty and Zariski open. 
	 
	We will proceed by induction on $|S|$. For $|S| =0$, this is immediate, so 
	suppose that $|S|\geq 1$ and let $i\in S$. 
	 Note that $\partial^S(f) = \partial_i \left(\partial^{S\setminus \{i\}} f\right)$. 
	 By induction, for generic $\gamma \in \SL_2(\R)^n$, $\partial^{S\setminus \{i\}}(\gamma \cdot f)$ is irreducible. Moreover, its coefficient of $\prod_{j\in ([n]\backslash S)\cup \{i\} } x_j$ is nonzero. 
	 Therefore, up to a scalar multiple, $\partial^{S\setminus \{i\}}(\gamma \cdot f)$ satisfies the hypothesis of Lemma~\ref{lem:Action2Irred}, and hence for generic $\widetilde{\gamma} \in \SL_2(\R)$ acting on the $i$th coordinate,  
	\[
	\partial_i\left(\widetilde{\gamma} \cdot\partial^{S\setminus \{i\}}(\gamma\cdot f)\right)= 	\partial^S\left(\widetilde{\gamma} \cdot \gamma \cdot f\right) 
	\]
	is irreducible. Here we use that $\widetilde{\gamma}$ commutes with the differential operator 
	$\partial^{S\backslash \{i\}}$, since $\widetilde{\gamma}$ acts as the identity in 
	the coordinates indexed by elements of $S\backslash \{i\}$. 
	It follows that for a generic element $\gamma\in \SL_2(\R)^n$, 
	$\partial^S\left(\gamma \cdot f\right)$ is irreducible. 
\end{proof}

\begin{lem}\label{lem:PositiveCoeff}
If $g = ax_1^2+bx_1+c$ is nonnegative on $\R^m$  where  $a,b,c\in \R[x_2,\hdots,x_m]$, 
then the polynomial $a$ is nonnegative on $\R^{m-1}$. 
\end{lem}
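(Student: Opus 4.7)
The plan is to reduce to the univariate case by fixing the last $m-1$ variables and examining the restricted polynomial as a quadratic in $x_1$. Specifically, for an arbitrary point $p = (p_2, \ldots, p_m) \in \R^{m-1}$, I would consider the specialization
\[
g(x_1, p_2, \ldots, p_m) = a(p) x_1^2 + b(p) x_1 + c(p) \in \R[x_1],
\]
which, by hypothesis, is nonnegative for all $x_1 \in \R$. The goal is then to conclude $a(p) \geq 0$.

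Next, I would observe that a univariate real polynomial of degree at most $2$ that is nonnegative on $\R$ must have a nonnegative leading coefficient. The cleanest way to see this is to divide by $x_1^2$ and send $x_1 \to \infty$: for any $x_1 \neq 0$ we have
\[
\frac{g(x_1, p_2, \ldots, p_m)}{x_1^2} = a(p) + \frac{b(p)}{x_1} + \frac{c(p)}{x_1^2} \geq 0,
\]
and taking the limit $x_1 \to \infty$ yields $a(p) \geq 0$. Since $p \in \R^{m-1}$ was arbitrary, this gives $a \geq 0$ on $\R^{m-1}$, as required.

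There is essentially no obstacle here; the argument is a direct limit computation once the restriction is set up. The only minor point of care is noting that the argument does not require $a(p) \neq 0$: when $a(p) = 0$ the conclusion $a(p) \geq 0$ holds trivially, and when $a(p) \neq 0$ the limit argument above forces $a(p) > 0$. Thus in both cases $a(p) \geq 0$, and no separate treatment of degenerate specializations is needed.
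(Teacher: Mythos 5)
Your proof is correct and takes essentially the same approach as the paper: specialize at an arbitrary point $p \in \R^{m-1}$, treat the result as a univariate polynomial of degree at most $2$ in $x_1$ that is nonnegative on $\R$, and conclude $a(p)\geq 0$. The paper states this last step without further justification, whereas you supply a short limit argument and note the degenerate case $a(p)=0$; both are fine.
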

\begin{proof}
Fix ${\bf p}\in \R^{m-1}$ and consider the specialization 
$g(x_1, {\bf p}) = a({\bf p})x_1^2+b({\bf p})x_1 + c({\bf p})$ in $ \R[x_1]$.  
Since $g$ is globally nonnegative on $\R^m$, $g(x_1, {\bf p})$ is nonnegative 
on $\R$ and so its leading coefficient $a({\bf p})$ must be nonnegative. 
\end{proof}	

\begin{lem}\label{lem:ConjFactors}
Suppose $g, h \in \C[x_1,\hdots,x_m]$ are multiaffine in $x_1, \hdots, x_n$ 
and $\partial^{[n]}g$ and $\partial^{[n]}h$ are nonzero polynomials in $x_{n+1}, \hdots, x_m$ 
of total degree at most one. If the product $g\cdot h$ has real coefficients and is nonnegative as a function on $\R^m$, then $h$ is a positive scalar multiple of $\overline{g}$, i.e. 
 $h = \lambda \overline{g}$ for some $\lambda\in \R_{>0}$. 
\end{lem}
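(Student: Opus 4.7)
The plan is to argue by induction on $n$, the number of multiaffine variables.

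\textbf{Base case $n = 0$.} Here $g$ and $h$ are nonzero polynomials of total degree at most one in $\C[x_1,\ldots,x_m]$, so writing $g = \alpha_0 + L_g$ and $h = \beta_0 + L_h$ with $L_g, L_h$ linear forms, I would first argue that the quadratic part $L_g L_h$ of $gh$ is real and nonnegative on $\R^m$, since it dominates $gh$ as $x \to \infty$ along any ray. A direct analysis of the real, rank-$\le 2$ symmetric positive-semidefinite matrix $\tfrac12(\alpha\beta^\top + \beta\alpha^\top)$ associated to this bilinear form shows that the coefficient vectors of $L_g$ and $L_h$ must be conjugates up to a positive real scalar, i.e. $L_h = \lambda\,\overline{L_g}$ for some $\lambda > 0$. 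The real-coefficient and nonnegativity constraints on the remaining terms of $gh$ then force $\beta_0 = \lambda\,\overline{\alpha_0}$, giving $h = \lambda\,\overline{g}$.

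\textbf{Inductive step $n \ge 1$.} Write $g = A_1 x_n + A_0$ and $h = B_1 x_n + B_0$, where $A_i, B_i \in \C[x_1,\ldots,x_{n-1},x_{n+1},\ldots,x_m]$ are multiaffine in $x_1,\ldots,x_{n-1}$. Then $\partial^{[n-1]}A_1 = \partial^{[n]}g$ is a nonzero affine linear polynomial in $x_{n+1},\ldots,x_m$, and similarly for $B_1$, so $A_1$ and $B_1$ satisfy the hypotheses of the lemma with $n-1$ in place of $n$. Expanding,
\[ gh = A_1 B_1\, x_n^2 + (A_1 B_0 + A_0 B_1)\, x_n + A_0 B_0. \]
By Lemma~\ref{lem:PositiveCoeff} the leading coefficient $A_1 B_1$ is a real polynomial nonnegative on $\R^{m-1}$, so the inductive hypothesis applied to $A_1, B_1$ yields $B_1 = \lambda\,\overline{A_1}$ for some $\lambda > 0$.

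Next, since $A_1$ and $B_1$ are nonzero in the integral domain $\C[x_1,\ldots,x_{n-1},x_{n+1},\ldots,x_m]$, the product $A_1 B_1$ is a nonzero nonnegative polynomial, and the set $\{A_1 B_1 > 0\}$ is Zariski dense in $\R^{m-1}$. On this set the discriminant of $gh$ as a quadratic in $x_n$ must be nonpositive; a short calculation reduces this discriminant to $(A_1 B_0 - A_0 B_1)^2$, and by continuity the inequality $(A_1 B_0 - A_0 B_1)^2 \le 0$ extends to all of $\R^{m-1}$. Hence $A_1 B_0 - A_0 B_1$ is a complex polynomial taking purely imaginary values on $\R^{m-1}$, so its real part vanishes identically. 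Combined with the fact that $A_1 B_0 + A_0 B_1$ is a real polynomial (from $gh$ having real coefficients), we obtain
\[ \overline{A_1 B_0} = A_0 B_1 = \lambda\, A_0\, \overline{A_1}. \]
Cancelling the nonzero factor $\overline{A_1}$ in the integral domain yields $B_0 = \lambda\,\overline{A_0}$, and therefore $h = B_1 x_n + B_0 = \lambda\,\overline{g}$.

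\textbf{Main obstacle.} The most delicate part is the base case, where one must carefully analyze when the product of two complex affine linear forms can be real and nonnegative, in particular the degenerate subcases where both forms are scalar multiples of a common real linear form or where one of them has no linear part. The inductive step itself is essentially a discriminant computation combined with real/imaginary part bookkeeping and is more routine.
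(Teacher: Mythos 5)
Your proof follows the same overall inductive skeleton as the paper's: split off $x_n$, observe that the coefficient of $x_n^2$ in $gh$ is real (as $gh$ has real coefficients) and nonnegative (by Lemma~\ref{lem:PositiveCoeff}), apply the inductive hypothesis to the leading coefficients, and then recover the constant term. The inductive step you give is, however, a genuinely different and in some respects cleaner argument than the paper's. The paper specializes the other variables to a generic real point and observes that the roots of the resulting real nonnegative quadratic in $x_n$ come in conjugate pairs, deducing $h^n/h_n = \overline{g^n}/\overline{g_n}$ as rational functions. You instead use the algebraic identity ${\rm Disc}_{x_n}(gh) = (A_1 B_0 + A_0 B_1)^2 - 4 A_1 B_1 A_0 B_0 = (A_1 B_0 - A_0 B_1)^2$, conclude from nonpositivity of the discriminant (on the dense set where $A_1 B_1 > 0$, and then everywhere by continuity) that $A_1 B_0 - A_0 B_1$ is purely imaginary as a polynomial, and combine this with the reality of $A_1 B_0 + A_0 B_1$ to get $\overline{A_1 B_0} = A_0 B_1$, after which cancellation in the integral domain finishes the step. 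This stays entirely in the polynomial ring and avoids rational functions, which is a small but pleasant gain.

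The base case is where you are genuinely underselling the difficulty, as you yourself flag. Claiming that ``a direct analysis of the real, rank-$\le 2$ symmetric positive-semidefinite matrix $\tfrac12(\alpha\beta^\top+\beta\alpha^\top)$'' yields $L_h = \lambda\overline{L_g}$ requires handling at least three non-obvious degenerate configurations: $L_g = 0$ or $L_h = 0$ (constant $g$ or $h$), $L_g$ and $\overline{L_g}$ linearly dependent (i.e.\ $L_g$ a unimodular multiple of a real form), and then making the constant terms $\alpha_0, \beta_0$ fall into line, which in the dependent case requires using all three constraints (real degree-0, real degree-1, and nonnegativity) simultaneously rather than just the nonnegativity of ``the remaining terms.'' The paper's base case sidesteps the quadratic-form viewpoint entirely: it writes $g = a + \ci b$, $h = c + \ci d$ with $a,b,c,d$ real of degree $\le 1$, reads off $ad = -bc$ from the vanishing of the imaginary part, and then exploits unique factorization among degree-$\le 1$ real polynomials to conclude proportionality, which is tighter and makes the degenerate branches (e.g.\ $b = 0$) cleanly visible. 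Your inductive step stands on its own, but the base case as written is a sketch rather than a proof, and filling it in would roughly reproduce the paper's case analysis.
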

\begin{proof}
($n=0$) Let $g= a + \ci b$ and $h = c + \ci d$ for some $a,b,c,d \in \R[x_1,\hdots,x_m]$. 
	Since $g \cdot h \in \R[x_1,\hdots,x_m]$, we see that $ad= - bc$. 
	Note that if $b=0$, then $d=0$ and so both $g$ and $h$ are real. In order for $g\cdot h$ to 
	be nonnegative on $\R^n$, we must have $h = \lambda\cdot g$ for some $\lambda\in \R_{>0}$.  
	The case $d=0$ follows similarly. 
	
	Otherwise, since $g$ and $h$ are linear and thus irreducible, either 
	$a = \lambda b$ and $c = - \lambda d$ or $a = \lambda c$ and $b = - \lambda  d$ for some 
	nonzero $\lambda \in \R$. 
	In the first case, $g=(\lambda+\ci) b$ and $h = (-\lambda+\ci)d = (\lambda-\ci)(-d)$ and thus 
	$g \cdot h = (\lambda^2 + 1)(-b\cdot d) \geq 0 $ on $\R^n$. Thus $-d= \mu b$ for some $\mu\in \R_{>0}$. 
	It follows that $h = (\lambda-\ci)(\mu b) = \mu \overline{g}$. 
	The second case gives $g = \lambda \overline{h}$. Since 
	$g\cdot h = \lambda h\cdot \overline{g}$ is nonnegative on $\R_{\geq 0}^n$, 
	we conclude $\lambda > 0$, as desired.
	
($n\geq 1$) Now suppose $n\geq 1$ and write $g = g_n x_n + g^n$ and $h = h_nx_n + h^n$. 
Since $g\cdot h$ is real and nonnegative, so is its coefficient of $x_n^2$, $g_n\cdot h_n$. 
In particular, $g_n, h_n$ satisfy the hypothesis of the theorem and so by induction, 
$h_n = \lambda \overline{g_n}$ for some $\lambda \in \R_{>0}$. 
Moreover, for every ${\bf a}\in \R^{m-1}$ with $g_n({\bf a})\neq 0$, the 
roots (in $x_n$) of the specialization of
$g\cdot h$ at ${\bf x} = {\bf a}$ come in complex conjugate pairs. 
It follows that 
$-h^n/h_n = -\overline{g^n}/\overline{g_n}$ as rational functions in $\C(x_k:k\neq n)$. 
Together with $h_n = \lambda \overline{g_n}$, this gives that $h = \lambda \overline{g}$. 
Moreover, since $g\cdot h = \lambda\cdot  g\cdot \overline{g}$ is nonnegative on $\R^n$, we see that $\lambda >0$. 
\end{proof}

\begin{thm}\label{thm:DetStableHerm}
	Let $f \in \R[x_1,\hdots,x_m]$  be stable and complex determinantal, i.e.
	\[
	f = \det\left(\diag(x_1,\hdots,x_n)+ \sum_{j=n+1}^m A_j x_j+A_0 \right)
	\]
	for some $n\times n$ complex matrices $A_j$. 
	Then there exists Hermitian matrices $B_0, B_{n+1}, \hdots, B_m$ 
	for which $f = \det\left(\diag(x_1,\hdots,x_n)+ \sum_{j=n+1}^m B_j x_j+B_0 \right).$
\end{thm}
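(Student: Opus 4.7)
The strategy is to show that every Rayleigh difference $\Delta_{ij}(f)$ is a Hermitian square in $\C[\mathbf{x}]$, after which Corollary~\ref{cor:HerCompDetRep} produces the desired Hermitian representation of $f$. The two key ingredients are the Desnanot--Jacobi identity, which expresses $\Delta_{ij}(f)$ as an explicit product of two complex polynomials, and Lemma~\ref{lem:ConjFactors}, which promotes a real nonnegative product of two suitably nondegenerate multiaffine complex polynomials to a Hermitian square.

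I would first pass to a generic frame via some $\gamma\in\SL_2(\R)^n$ so that by Lemma~\ref{lem:Action2Irred2} every partial derivative $\partial^S(\gamma\cdot f)$ is irreducible in $\R[\mathbf{x}]$. The action of $\SL_2(\R)^n$ preserves real stability (each $\gamma_i$ is a M\"obius transformation of the upper half-plane) and preserves the form of a complex linear determinantal representation up to a nonzero scalar by Corollary~\ref{cor:Invariance}, so these assumptions are without loss of generality. If $f$ itself is reducible, Lemma~\ref{lem:factorDelta} shows that its factors are multiaffine in disjoint subsets of $\{x_1,\ldots,x_n\}$; real stability and the Hermitian-square property descend to factors, and a block-diagonal assembly then recovers a Hermitian representation of $f$.

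From the complex representation $f=\det(M)$ with $M=\diag(x_1,\ldots,x_n)+\sum_{j>n}A_j x_j+A_0$, the Desnanot--Jacobi identity (as used in the proof of Theorem~\ref{thm:DeltaDetRep}) gives
\[
\Delta_{ij}(f) \;=\; G_{ij}\,G_{ji}, \qquad G_{ij}=\det(M(i,j))\in\C[\mathbf{x}],
\]
where $G_{ij}$ is multiaffine in $\{x_k:k\in[n]\setminus\{i,j\}\}$ and a polynomial of total degree $\leq n-1$ in $x_{n+1},\ldots,x_m$. A direct expansion of the determinant shows
\[
\partial^{[n]\setminus\{i,j\}}G_{ij} \;=\; \pm B_{ji}, \qquad B = A_0+\sum_{\ell>n}A_\ell x_\ell,
\]
so this top-monomial coefficient is an affine polynomial in $x_{n+1},\ldots,x_m$. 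Since $f$ is real stable and multiaffine in $x_1,\ldots,x_n$, specializing $x_{n+1},\ldots,x_m$ to real values and applying Br\"anden's characterization of real stable multiaffine polynomials yields $\Delta_{ij}(f)\ge 0$ on $\R^m$. Lemma~\ref{lem:ConjFactors} (with the role of $n$ played by $n-2$, viewing $G_{ij}$ and $G_{ji}$ as multiaffine in the $n-2$ variables $\{x_k : k\notin\{i,j\}\}$ and as polynomials of total degree $\leq 1$ in $x_{n+1},\ldots,x_m$ after applying $\partial^{[n]\setminus\{i,j\}}$) then delivers $G_{ji}=\lambda\,\overline{G_{ij}}$ for some $\lambda\in\R_{>0}$, so
\[
\Delta_{ij}(f) \;=\; \bigl(\sqrt{\lambda}\,G_{ij}\bigr)\bigl(\sqrt{\lambda}\,\overline{G_{ij}}\bigr)
\]
is a Hermitian square, as required.

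The main obstacle is verifying the nonvanishing hypothesis in Lemma~\ref{lem:ConjFactors}, namely that $B_{ij}$ and $B_{ji}$ are not identically zero as polynomials in $x_{n+1},\ldots,x_m$. I expect this to follow from a sufficiently generic choice of $\gamma\in\SL_2(\R)^n$ (possibly combined with a diagonal conjugation of the representing matrix, which preserves $f$) because the transformed off-diagonal entries depend nontrivially on the parameters of $\gamma$. A careful case analysis may still be needed to rule out pathological configurations in which no choice of representation makes $B_{ij}$ and $B_{ji}$ simultaneously nonzero; in such a case one would expect identical vanishing of certain off-diagonal entries to force a block structure in the representation that reduces the problem to a smaller instance handled inductively.
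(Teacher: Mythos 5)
Your overall strategy matches the paper's exactly: reduce to the irreducible case, apply a generic $\gamma\in\SL_2(\R)^n$ so that Lemma~\ref{lem:Action2Irred2} gives irreducibility of every $\partial^S(\gamma\cdot f)$, use Desnanot--Jacobi/Theorem~\ref{thm:DeltaDetRep} to factor $\Delta_{ij}(f)=g_{ij}g_{ji}$ into multiaffine pieces coming from the complex representation, invoke Br\"and\'en's nonnegativity of $\Delta_{ij}(f)$ for real stable $f$, and then apply Lemma~\ref{lem:ConjFactors} to upgrade the factorization to a Hermitian square and finish with Theorem~\ref{thm:HerDetRep}. The reducible case via Lemma~\ref{lem:factorDelta} and block diagonals is likewise identical.

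The genuine gap is exactly where you flag it: verifying the nondegeneracy hypothesis of Lemma~\ref{lem:ConjFactors}, i.e.\ that $\partial^{[n]\setminus\{i,j\}}g_{ij}$ and $\partial^{[n]\setminus\{i,j\}}g_{ji}$ are nonzero. You leave this open, speculating about ``a careful case analysis'' and possible ``pathological configurations,'' but the irreducibility you have already arranged closes this cleanly. The key observation you are missing is the identity
\[
{\rm coeff}\!\left(\Delta_{ij}(f),\;\prod_{k \in [n]\setminus\{i,j\}}x_k^2\right) \;=\; \Delta_{ij}\!\left(\partial^{[n]\setminus \{i,j\}} f\right),
\]
which one proves by a short induction. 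The left-hand side equals the product of the two leading coefficients $\partial^{[n]\setminus\{i,j\}}g_{ij}\cdot\partial^{[n]\setminus\{i,j\}}g_{ji}$ you are worried about; the right-hand side is nonzero because if it vanished, Proposition~\ref{prop:DeltaZero} would force $\partial^{[n]\setminus\{i,j\}}(\gamma\cdot f)$ to be reducible, contradicting the choice of $\gamma$ from Lemma~\ref{lem:Action2Irred2}. So no case analysis or additional diagonal conjugation is needed --- the generic $\gamma$ you already introduced is precisely what guarantees the nonvanishing, and without spelling out this link the proof is incomplete at a step that actually requires work.
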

\begin{proof} 
First suppose $f$ is irreducible. 
By Lemma \ref{lem:Action2Irred2}, there is $\gamma \in \SL_2(\R)^n$, such that $\partial^{S}(\gamma \cdot f)$ is irreducible for all $S\subset [n]$. 
	By Corollary~\ref{cor:Invariance}, we can replace $f$ by $\gamma\cdot f$, 
	and thereby assume that all the coefficients of $\prod_{k \in [n]\setminus\{i,j\}}x_k^2$ 
	in the polynomials $\Delta_{ij}(\gamma \cdot f)$ are non-zero.  To see this, notice that by induction on $n$, we can prove that
	\[
	{\rm coeff}\left(\Delta_{ij}(f), \prod_{k \in [n]\setminus\{i,j\}}x_k^2\right) = \Delta_{i,j}\left(\partial^{[n]\setminus \{i,j\}}( f)\right).
	\]
	If this coefficient is zero, then Lemma~\ref{lem:factorDelta} implies that $\partial^{[n]\setminus \{i,j\}}( f)$ is reducible.

Let $i<j\in [n]$. 
Since $f$ is determinantal, by Theorem~\ref{thm:DeltaDetRep}, the polynomial $\Delta_{ij}(f)$ factors as $g_{ij}\cdot g_{ji}$ where $g_{ij},g_{ji}$ are multiaffine in $\{x_k : k\in [n]\backslash\{i,j\}\}$ and has total degree $\leq n-1$. 
In particular, the coefficient of $\prod_{k \in [n]\setminus\{i,j\}}x_k$ in both $g_{ij}$ 
and $g_{ji}$ has degree $\leq 1$ in $x_{n+1}, \hdots, x_m$. 
By the arguments above we can assume this coefficient is nonzero.  Since $f$ is real stable, $\Delta_{ij}(f)$ is also globally nonnegative on $\R^n$ \cite{Branden07}.  Therefore by Lemma~\ref{lem:ConjFactors}, $g_{ji} = \lambda \overline{g_{ij}}$ for some $g_{ij}$.  It follows that $\Delta_{ij}(f)$ factors as a Hermitian square $h_{ij}\cdot \overline{h_{ij}}$ where 
$h_{ij} = \sqrt{\lambda} g_{ij}$.  
Theorem~\ref{thm:HerDetRep} then gives the desired Hermitian determinantal representation. 

Now suppose $f$ is reducible, say $f = f_1 \cdots f_r$ where each factor $f_k$ 
is irreducible and multiaffine in the variables $x_i$ for $i\in I_k\subset [n]$. 
Each factor is stable. Moreover, by 
Lemma~\ref{lem:factorDelta}, $\Delta_{ij}(f_k)$ is either zero or factors as a product 
of two polynomials that are multiaffine in $\{x_{\ell}:{\ell}\in I_k\} $ and 
with total degree $\leq |I_k|-1$. Since $f_k$ is irreducible, 
the arguments above show that for every $i,j\in I_k$, $\Delta_{ij}(f_k)$ is a Hermitian square, 
from which it follows that $\Delta_{ij}(f) = \Delta_{ij}(f_k)\cdot \prod_{\ell\neq k}f_{\ell}^2$ is a Hermitian square. Theorem~\ref{thm:HerDetRep} then gives the desired Hermitian determinantal representation. 
\end{proof}
\begin{remark}
Theorem~\ref{thm:DetStableHerm} cannot hold for arbitrary real stable polynomials. 
For example, consider $f$ to be the basis generating polynomial of the V\'amos matriod, defined in 
\cite{BrandenDetRep}. It was shown by Wagner and Wei \cite{WW09} that $f$ is stable. 
By the theory of matrix factorizations, some power $f^r$ of $f$ has a complex linear determinantal representation 
(see \cite[\S 3.3]{VinnikovSurvey}).  This power is necessarily stable, but as shown by Br\"and\'en \cite{BrandenDetRep}, $f^r$ does not have a definite Hermitian determinantal representation. 
\end{remark}
When $f$ is reducible, one can easily construct determinantal representations of $f$ that are not Hermitian by taking block upper triangular representations.  
For example, $x_1x_2$ equals $\det\begin{pmatrix} x_1 & 1 \\ 0 & x_2\end{pmatrix}$. 
However, when $f$ is irreducible and real stable, we see that all complex linear determinantal representations 
are Hermitian, up to conjugation by diagonal matrices. 

\begin{thm}\label{thm:HermitianDiagScaling}
	Let $f \in \R[x_1,\hdots,x_m]$  be stable, irreducible, and complex determinantal, i.e.
	\[
	f = \det\left(\diag(x_1,\hdots,x_n)+ \sum_{j=n+1}^m A_j x_j+A_0 \right)
	\]
	for some $n\times n$ complex matrices $A_j$. 
	Then there exists a real diagonal matrix $D \in \R^{n\times n}$ such that $D^{-1} A_j D$ is Hermitian for all $j$.
\end{thm}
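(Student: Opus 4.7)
The plan is to follow the cofactor strategy of Theorem~\ref{thm:DetStableHerm}, but this time tracking how diagonal conjugation rescales the cofactors so that the scaling matrix $D$ emerges explicitly. Set $M = \diag(x_1,\ldots,x_n) + \sum_{j=n+1}^m A_j x_j + A_0$ and define the cofactors $g_{ij} = \det(M(i,j))$. By the Dodgson/Sylvester identity these satisfy $g_{ij}g_{ji} = \Delta_{ij}(f)$, and by Theorem~\ref{thm:DeltaDetRep}(b) they satisfy ${\rm res}_{x_k}(g_{ij},f) = g_{ik}g_{kj}$ for distinct $i,j,k$. I will first apply Lemma~\ref{lem:Action2Irred2} to produce a generic $\gamma \in \SL_2(\R)^n$ for which the nonvanishing hypotheses of Lemma~\ref{lem:ConjFactors} hold for the transformed cofactors $\gamma \cdot g_{ij}$; since $f$ is real stable the polynomial $\Delta_{ij}(f)$ is globally nonnegative on $\R^m$ and this nonnegativity is preserved by the action of $\gamma$, so exactly as in the proof of Theorem~\ref{thm:DetStableHerm} we obtain $\gamma \cdot g_{ji} = \lambda_{ij}\,\overline{\gamma \cdot g_{ij}}$ for some $\lambda_{ij} \in \R_{>0}$. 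Because $\gamma$ is real its action commutes with complex conjugation, so applying $\gamma^{-1}$ yields $g_{ji} = \lambda_{ij}\,\overline{g_{ij}}$ for the original cofactors.

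Next I will establish the cocycle relation $\lambda_{ij}\lambda_{jk} = \lambda_{ik}$ for distinct $i,j,k$. Conjugating the resultant identity $g_{ik}g_{kj} = {\rm res}_{x_k}(g_{ij},f)$ and using that $f$ has real coefficients gives $\overline{g_{ik}}\,\overline{g_{kj}} = {\rm res}_{x_k}(\overline{g_{ij}},f)$. Substituting $\overline{g_{ij}} = g_{ji}/\lambda_{ij}$ and using the resultant identity for the pair $(j,i)$ produces $\lambda_{ij}\,\overline{g_{ik}}\,\overline{g_{kj}} = g_{jk}g_{ki}$, and replacing $g_{ki} = \lambda_{ik}\overline{g_{ik}}$ together with $g_{jk} = \overline{g_{kj}}/\lambda_{jk}$ then yields the cocycle after cancellation. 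Setting $\mu_1 = 1$ and $\mu_i = \lambda_{i1}$ for $i \geq 2$, the cocycle forces $\mu_i/\mu_j = \lambda_{ij}$, and taking $d_i := \sqrt{\mu_i} \in \R_{>0}$ defines a real positive diagonal $D = \diag(d_1,\ldots,d_n)$.

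Finally, I will verify that $D^{-1}MD$ is Hermitian. Diagonal conjugation by $D$ rescales cofactors as $g'_{ij} = (d_i/d_j)g_{ij}$, and the relation $d_i^2/d_j^2 = \lambda_{ij}$ produces
\[
g'_{ji} = (d_j/d_i)\,\lambda_{ij}\,\overline{g_{ij}} = (d_i/d_j)\,\overline{g_{ij}} = \overline{g'_{ij}}.
\]
Therefore $(D^{-1}MD)^{\rm adj}$ is Hermitian; since $f = \det(D^{-1}MD)$ is a real polynomial and $D^{-1}MD = f \cdot ((D^{-1}MD)^{\rm adj})^{-1}$ as polynomial matrices, $D^{-1}MD$ is itself Hermitian, which forces each $D^{-1}A_j D$ (and $D^{-1}A_0 D$) to be Hermitian. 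The main obstacle I anticipate is the bookkeeping in the $\SL_2(\R)$ reduction: Lemma~\ref{lem:ConjFactors} requires nonvanishing of top coefficients of the $g_{ij}$ which can fail for special $A$, so the reduction must be carried out on the cofactor polynomials (using the equivariance in Proposition~\ref{prop:SL2_on_res}) and then transferred back via $\gamma^{-1}$, rather than replacing the matrix $A$ with $\gamma \cdot A$ where the Hermitian conclusion would not obviously revert under the $\gamma^{-1}$ action on matrices.
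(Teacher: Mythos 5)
Your proof is correct and follows the paper's overall architecture (the generic $\SL_2(\R)^n$ reduction via Lemma~\ref{lem:Action2Irred2}, extraction of positive scalars $\lambda_{ij}$ from Lemma~\ref{lem:ConjFactors}, a multiplicative cocycle, and a diagonal $D$ built from square roots), but it differs in two genuine respects. First, where the paper works with the \emph{entries} $a_{ij}$ of $A(x)$ — using Lemma~\ref{lem:PositiveCoeff} on the top coefficient $a_{ij}a_{ji}$ of $\Delta_{ij}(f)$ — you work throughout with the \emph{cofactors} $g_{ij}=\det(M(i,j))$, applying Lemma~\ref{lem:ConjFactors} to the full factorization $\Delta_{ij}(f)=g_{ij}g_{ji}$ exactly as in the proof of Theorem~\ref{thm:DetStableHerm}. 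Second, and more substantively, for the cocycle $\lambda_{ij}\lambda_{jk}=\lambda_{ik}$ the paper re-invokes Lemma~\ref{lem:ConjFactors} on two explicitly computed $2\times 2$ minors $\partial^{[n]\setminus\{1,2,3\}}\det(M[3,1])$ and $\partial^{[n]\setminus\{1,2,3\}}\det(M[1,3])$, whereas you obtain the cocycle purely algebraically by conjugating the resultant identity ${\rm res}_{x_k}(g_{ij},f)=g_{ik}g_{kj}$ from Theorem~\ref{thm:DeltaDetRep} and cancelling the cofactors, which are nonzero since $f$ is irreducible. Your version avoids a second appeal to positivity and is arguably cleaner, and the concluding step via Hermitianity of $(D^{-1}MD)^{\rm adj}$ and the inverse relation $D^{-1}MD = f\,((D^{-1}MD)^{\rm adj})^{-1}$ is a nice replacement for the paper's direct entry check. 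Two small remarks: you should note that the diagonal cofactors $g_{ii}=\partial f/\partial x_i$ are real, so that the adjugate is Hermitian on the diagonal as well as off it; and you should confirm that $\partial^{[n]\setminus\{i,j\}}(\gamma\cdot g_{ij})$ nonvanishing (the Lemma~\ref{lem:ConjFactors} hypothesis) is exactly the condition the paper extracts from Lemma~\ref{lem:Action2Irred2} via ${\rm coeff}(\Delta_{ij}(\gamma\cdot f),\prod_{k\notin\{i,j\}}x_k^2)\neq 0$, so the same genericity argument applies to your cofactor formulation. Your attention to the $\gamma$-transfer (pulling the scalar identity $\gamma\cdot g_{ji}=\lambda_{ij}\overline{\gamma\cdot g_{ij}}$ back under $\gamma^{-1}$ rather than transferring matrices) is a point the paper handles implicitly but is worth making explicit.
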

\begin{proof}
By Lemma \ref{lem:Action2Irred2}, there is $\gamma \in \SL_2(\R)^n$, such that $\partial^{S}(\gamma \cdot f)$ is irreducible for all $S\subset [n]$. 
	By Corollary~\ref{cor:Invariance}, we can replace $f$ by $\gamma\cdot f$, 
	and thereby assume that all the coefficients of $\prod_{k \in [n]\setminus\{i,j\}}x_k^2$ 
	in the polynomials $\Delta_{ij}(\gamma \cdot f)$ are non-zero, as in the proof of 
	Theorem~\ref{thm:DetStableHerm}.	
	
	Let $A(x) = \sum_{k=n+1}^m A_k x_k + A_0$ and let $a_{ij}\in \C[x_{n+1},\hdots, x_m]$ 
	denote the $(i,j)$th entry of $A(x)$. Then the coefficient of $\prod_{k\in [n]\setminus\{i,j\}} x_k^2$
	in $\Delta_{ij}f$  is $a_{ij} a_{ji}$.
Since $f$ is stable, the polynomial $\Delta_{ij}(f)$ is nonnegative on $\R^m$. 
	Then by Lemma~\ref{lem:PositiveCoeff}, it follows that the coefficient $a_{ij} a_{ji}$ of 
	$\prod_{k\in [n]\setminus\{i,j\}}x_k^2$ in $\Delta_{ij}(f)$ is nonnegative on $\R^{n-m}$. 
	By Lemma~\ref{lem:ConjFactors}, we can conclude that for each $1\leq i<j\leq n$, 
	there is some $\lambda_{ij}\in \R_{>0}$ such that $a_{ij} = \lambda_{ij} \overline{a_{ji}}$. 
	
	We claim that the scalars $\lambda_{ij}$ satisfy 
	$\lambda_{ij} = \lambda_{ik}\lambda_{kj}$ for all $1\leq i<k<j\leq n$. 
	For simplicity, we show this for $i=1$, $k=2$, $j=3$ and the proof in general is identical. 
	By the arguments above, the starting determinantal representation of $f$ has the form 
\[
{\rm diag}(x_1, \hdots, x_n) + A(x) = 
\begin{pmatrix}
x_1 +a_{11} 				& a_{12} & a_{13} & \hdots  & a_{1n}\\
\lambda_{12} \overline{a_{12}} & x_2 + a_{22} & a_{23} &\hdots  & \\
\lambda_{13} \overline{a_{13}} & \lambda_{23} \overline{a_{23}}& x_3 + a_{33} &  & \\ 
\vdots & \vdots  & &\ddots   &  \\
\lambda_{1n} \overline{a_{1n}} && & & x_n+a_{nn} 
\end{pmatrix}.
\]	
Recall that by Dodgson condensation, the polynomial $\Delta_{ij}(f)$ 
factors as $\det(M[i,j])\cdot \det(M[j,i])$ where $M[i,j]$ is the matrix obtained from $M = {\rm diag}(x_1, \hdots, x_n)+A(x)$ 
by removing the $i$th row and $j$th column. 
These polynomials are affine in $x_k$ for $k\in [n]\backslash \{i,j\}$. 
In particular, 
\begin{align*}
g := \partial^{[n] \setminus\{1,2,3\}}\det(M[3,1]) &=a_{12}a_{23} - a_{13} (x_2 + a_{22}), \text{ and }  \\
h := \partial^{[n] \setminus\{1,2,3\}}\det(M[1,3]) &= \lambda_{12} \lambda_{23}\overline{a_{12}}  \overline{a_{23}} -  \lambda_{13} \overline{a_{13}}(x_2 + a_{22}).
\end{align*}

These polynomials satisfy the hypotheses of Lemma~\ref{lem:ConjFactors}, and so 
there is some $\mu\in \R_{>0}$ for which $h = \mu \overline{g}$. 
Since $a_{ij}$ is nonzero for all $i,j$ and $a_{22}$ is invariant under conjugation, 
we see that $\lambda_{12} \lambda_{23} = \mu = \lambda_{13}$. 
More generally $\lambda_{ij} = \lambda_{ik}\lambda_{kj}$ for any $i<k<j$. 

Now define $D = {\diag}(1,\sqrt{\lambda_{12}}, \hdots, \sqrt{\lambda_{1n}})$. 
For $i<j$, $\lambda_{1j}=\lambda_{1i}\lambda_{ij}$
we calculate the $(i,j)$th and $(j,i)$th entries of $D^{-1} A(x) D$ as
\[
(D^{-1} A(x) D)_{ij} = 
\frac{\sqrt{\lambda_{1j}}}{\sqrt{\lambda_{1i}}} a_{ij} 
= 
\sqrt{\lambda_{ij}}a_{ij}
\ \ \ \text{ and } \ \ \
(D^{-1} A(x) D)_{ji} = 
\frac{\sqrt{\lambda_{1i}}}{\sqrt{\lambda_{1j}}}\lambda_{ij}\overline{a_{ij}} 
= 
\sqrt{\lambda_{ij}}\overline{a_{ij}}.
\] 
\end{proof}

%%%%%%%%%%%%%%%%%%%%%%%%%%%%%%%%%%%%%%%%%%%%%%%%%%%%%%%%
%%%%%%%%%%%%%%%%%%%%%%%%%%%%%%%%%%%%%%%%%%%%%%%%%%%%%%%%

\section{Defining the set of factoring multiquadratic polynomials and the image of the principal minor map}\label{sec:factoringMQ}

In this section we give a complete characterization of the image of the principal minor map of Hermitian matrices 
using the characterization of Hermitian multiaffine determinantal polynomials from Section~\ref{sec:otherDetRep} 
and the characterization of multiquadratic polynomials that are Hermitian squares. 
This set is invariant under the action of $\SL_2(R)^n \rtimes S_n$ and we derive 
the defining equations and numerical conditions  as the orbit of a finite set under the action of this group, where
 $R$ is a unique factorization domain. In this section, we will restrict to rings and fields of characteristic 
$\neq 2$.

\begin{lem}\label{lem:UnivariateQuadraticFactor}
	Let $g = ax^2 + bx +c\in R[x]$. 
	The polynomial $g$ factors in to two linear factors in $R[x]$
	if and only if its discriminant ${\rm Discr}_{x}(g)$ is a square in $R$. 
\end{lem}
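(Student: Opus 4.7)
\medskip

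\noindent\textbf{Proof plan.} The forward direction is a direct calculation: if $g = (px+q)(rx+s)$ in $R[x]$, then comparing coefficients gives $a=pr$, $b=ps+qr$, $c=qs$, and
\[
b^2 - 4ac = (ps+qr)^2 - 4prqs = (ps-qr)^2,
\]
which is a square in $R$. This uses only the ring operations and not that $R$ is a UFD.

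\medskip

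\noindent For the converse, suppose ${\rm Discr}_x(g)=b^2-4ac=d^2$ with $d\in R$. The plan is to exploit the classical completing-the-square identity
\[
(2ax+b-d)(2ax+b+d) \;=\; (2ax+b)^2 - d^2 \;=\; 4a^2x^2 + 4abx + 4ac \;=\; 4a\cdot g,
\]
which lives in $R[x]$ provided the characteristic is $\neq 2$. If $a=0$, then $g=bx+c$ is already linear, so we may assume $a\neq 0$. Setting $L_1 = 2ax+b-d$ and $L_2 = 2ax+b+d$, we have the factorization $L_1L_2 = 4a\cdot g$ in $R[x]$, and each $L_i$ has degree $1$ in $x$. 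The remaining task is to redistribute the constant factor $4a$ between $L_1$ and $L_2$ so that the quotients still lie in $R[x]$.

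\medskip

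\noindent To accomplish this, I would split each $L_i$ into its content and primitive part, writing $L_i = c_iP_i$ with $c_i\in R$ and $P_i\in R[x]$ primitive of degree $1$. By Gauss's lemma, $P_1P_2$ is primitive, so comparing contents in $L_1L_2 = 4a\cdot g$ yields $c_1c_2 = 4a\cdot{\rm cont}(g)$ up to a unit of $R$. In particular, $4a$ divides $c_1c_2$ in $R$. Using that $R$ is a UFD, I would set $h=\gcd(c_1,4a)$ and write $c_1 = h c_1'$ with $\gcd(c_1',4a/h)=1$; the divisibility $4a\mid c_1c_2$ then forces $(4a/h)\mid c_2$, giving $c_2=(4a/h)c_2'$. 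Defining $g_1 = c_1'P_1$ and $g_2 = c_2'P_2$ (absorbing the leftover unit into one factor), both lie in $R[x]$ and are linear in $x$, and
\[
g_1 g_2 \;=\; c_1'c_2'\,P_1P_2 \;=\; \tfrac{1}{4a}\,c_1c_2\,P_1P_2 \;=\; \tfrac{1}{4a}\,L_1L_2 \;=\; g,
\]
as desired.

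\medskip

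\noindent The only nontrivial step is the UFD bookkeeping that splits $4a$ as a product $h\cdot(4a/h)$ dividing $c_1$ and $c_2$ respectively; this is where the unique factorization hypothesis on $R$ enters essentially, and where the characteristic~$\neq 2$ hypothesis is used to ensure $4a\neq 0$ whenever $a\neq 0$ so that the completing-the-square identity is a genuine equality of nonzero polynomials.
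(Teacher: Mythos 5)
Your proof is correct, and it takes a genuinely different route from the paper's. For the forward direction you compute directly that if $g=(px+q)(rx+s)$ then $b^2-4ac=(ps-qr)^2$; the paper instead invokes the quadratic formula over $\mathrm{frac}(R)$ and the integral-closedness of a UFD to pull the square root back into $R$. Your version is more elementary and does not even use unique factorization. For the converse, the paper works directly with the constant equation $(b-q)(b+q)=4ac$ and asserts a redistribution $b-q=2a_1c_1$, $b+q=2a_2c_2$ with $a_1a_2=a$, $c_1c_2=c$; this implicitly requires showing that $2$ divides each of $b\pm q$ (a true but unaddressed point when $2$ is a non-unit non-prime) and then a Riesz-type splitting. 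You instead lift the completing-the-square identity to the polynomial level, obtaining $L_1L_2=4a\cdot g$ with $L_1,L_2$ linear, and then use Gauss's lemma on contents to push the scalar $4a$ back out. This cleanly packages the UFD bookkeeping into one content comparison plus a $\gcd$ argument and avoids the separate ``$2$ divides $b\pm q$'' step, at the cost of invoking Gauss's lemma. Both arguments use $\mathrm{char}(R)\neq 2$ in essentially the same place (to ensure $2a\neq 0$ when $a\neq 0$), and both handle $a=0$ as a trivial degenerate case.

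One small cosmetic point: in your final computation you do not actually need to ``absorb a leftover unit.'' If you define $c_i=\mathrm{cont}(L_i)$ and $P_i=L_i/c_i$ exactly, then $L_1L_2=c_1c_2P_1P_2=4a\cdot g$ as an honest equality, and $c_1'c_2'=c_1c_2/(4a)$ gives $g_1g_2=g$ on the nose, with no unit ambiguity to chase.
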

\begin{proof}
	($\Rightarrow$) If $g$ factors, then it has a root in the fraction field of $R$. 
	By the quadratic formula, this implies that the discriminant is a 
	square in ${\rm frac}(R)$, and hence in $R$.  
	
	($\Leftarrow$) Suppose that $b^2 - 4ac = q^2$ for some $q\in R$. 
	We can rewrite this as $(b-q)(b+q) = 4ac$. 
	Since $R$ is a unique factorization domain, there is some choice of factorization 
	of $a = a_1a_2$ and $c = c_1c_2$ so that $b-q = 2a_1c_1$ and $b+q = 2a_2c_2$. 
	If $a = 0$, then $g$ factors as $1\cdot (bx+c)$, so we can assume $a\neq 0$. 
	We can then write $g$ as
	\[
	g = a\left(x - \frac{-b+q}{2a}\right)\left(x - \frac{-b-q}{2a}\right) = a_1a_2\left(x + \frac{c_1}{a_2}\right)\left(x + \frac{c_2}{a_1}\right)
	= (a_2x + c_1)(a_1x + c_2).
	\]
\end{proof}

This lemma does not hold over rings of characteristic two. 
See \cite[Section~2.4, Exercise~6]{CoxGalois} for further discussion. 
Note that for $g\in R[x,y]_{\rm MQ}$, ${\rm Discr}_x(g)$ is a polynomial of degree $4$ in $y$ whose coefficients are quadratic in the coefficients of $g$. 

\begin{lem}\label{lem:squareCubics} Let $h(x) = \sum_{i=0}^4 b_i x^i \in R[x]_{4}$ a univariate quartic.
	Then $h$ is a square in $R[x]$ if and only if $b_0$, $b_4$ and $h(1) = \sum_j b_j$ are squares in $R$ and the point $(b_0, b_1, b_2, b_3, b_4)$ satisfies
	\begin{align}\label{equ:mainequs}
		& \ \ \ \  b_4 b_1^2 - b_3^2 b_0 = 0,   \ \  \ \ \ \ b_3^3 - 4 b_4b_3b_2 + 8b_4^2b_1 = 0, \ \ \ \ \ b_1^3 - 4 b_0b_1b_2 + 8b_0^2b_3 = 0\\ \nonumber
		&   b_2 b_3^2 - 4 b_2^2 b_4 + 2 b_1 b_3 b_4 + 16 b_0 b_4^2=0, \ \ \text{ and } \ \   b_1^2 b_2 - 4 b_0 b_2^2 + 2 b_0 b_1 b_3 + 16 b_0^2 b_4=0. 
	\end{align}
\end{lem}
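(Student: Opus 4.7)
For the forward direction, I would suppose $h = p^2$ with $p = c_0 + c_1 x + c_2 x^2 \in R[x]$ and compare coefficients: $b_0 = c_0^2$, $b_1 = 2 c_0 c_1$, $b_2 = c_1^2 + 2 c_0 c_2$, $b_3 = 2 c_1 c_2$, $b_4 = c_2^2$. So $b_0$, $b_4$, and $h(1) = (c_0 + c_1 + c_2)^2$ are all squares in $R$, and substituting these expressions into each of the five equations of~\eqref{equ:mainequs} turns each one into a polynomial identity in $c_0, c_1, c_2$.

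For the backward direction, I will construct an explicit square root. Assume $b_0 = \alpha^2$, $b_4 = \beta^2$, $h(1) = \gamma^2$ for some $\alpha, \beta, \gamma \in R$ and that~\eqref{equ:mainequs} holds. The candidate I have in mind is
\[
p(x) \;=\; \alpha \,+\, (\gamma - \alpha - \beta)\, x \,+\, \beta\, x^2,
\]
and the plan is to show, after adjusting signs of $\alpha, \beta, \gamma$, that $p^2 = h$. Setting $c_1 := \gamma - \alpha - \beta \in R$, this reduces to verifying the three coefficient identities (i)~$b_1 = 2\alpha c_1$, (ii)~$b_3 = 2\beta c_1$, and (iii)~$b_2 = c_1^2 + 2\alpha\beta$; a direct expansion of $(c_1 + \alpha + \beta)^2 = \gamma^2 = h(1)$ shows that (iii) follows automatically from (i) and (ii).

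To obtain (i) and (ii), I would proceed in three steps. First, the first equation of~\eqref{equ:mainequs} factors as $(\beta b_1 - \alpha b_3)(\beta b_1 + \alpha b_3) = 0$ in the UFD $R$, so after possibly negating $\alpha$ I may assume $\beta b_1 = \alpha b_3$. Second, equations $3$ and $5$ of~\eqref{equ:mainequs} combine to yield
\[
b_1^2 - 4 b_0 b_2 \;=\; -8 b_0 \,\alpha\beta,
\]
with equation $3$ treating the generic case $b_1 \neq 0$ and equation $5$ (together with equation $1$) handling the degenerate case $b_1 = 0$; a symmetric argument using equations $2$ and $4$ gives $b_3^2 - 4 b_4 b_2 = -8 b_4 \,\alpha\beta$. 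Third, setting $\delta := b_1 + 2 b_0 + 2\alpha\beta$ and $\delta' := b_3 + 2 b_4 + 2\alpha\beta$, a direct calculation using $b_0 = \alpha^2$, $b_4 = \beta^2$, and $\beta b_1 = \alpha b_3$ reduces $\delta^2 - 4 b_0 h(1)$ to $4\alpha(\beta b_1 - \alpha b_3) = 0$, so $\delta^2 = (2\alpha\gamma)^2$ and analogously $(\delta')^2 = (2\beta\gamma)^2$. Choosing the sign of $\gamma$ so that $\delta = 2\alpha\gamma$ is exactly (i), and the further identity $\alpha\delta' - \beta\delta = \alpha b_3 - \beta b_1 = 0$ (immediate after substituting $b_0 = \alpha^2$ and $b_4 = \beta^2$) then forces $\delta' = 2\beta\gamma$ for the same $\gamma$ in the domain $R$, giving (ii); an easy separate verification covers the subcase $\alpha = 0$.

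The main obstacle will be orchestrating the sign choices for $\alpha, \beta, \gamma$ coherently and handling the degenerate cases where one or more of $b_0, b_4, b_1, b_3$ vanishes, since in each such regime a different equation from~\eqref{equ:mainequs} becomes the active constraint. The UFD hypothesis enters through the factorization of $(\beta b_1)^2 - (\alpha b_3)^2$, and the characteristic-$\neq 2$ assumption is essential because the factor of $2$ is intrinsic to the coefficient identities $b_1 = 2 c_0 c_1$ and $b_3 = 2 c_1 c_2$.
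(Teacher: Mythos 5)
Your proof takes a genuinely different route from the paper's, and the approach is sound. The paper also splits into the cases $b_3 \neq 0$, $b_1 \neq 0$, and $b_1 = b_3 = 0$: in the first two it defines the linear coefficient of the square root as $b_3/(2\alpha)$ or $b_1/(2\delta)$ in $\mathrm{frac}(R)$ (its notation: $b_4=\alpha^2$, $b_0=\delta^2$), uses equations $1$ and $2$ (resp. $1$ and $3$) to verify $h=(\alpha x^2+\beta x+\delta)^2$, and only at the end evaluates at $x=1$ to see that $\beta = \pm\lambda-\alpha-\delta$ lies in $R$; in the third case it invokes equations $4$ and $5$ directly. You instead stay entirely inside $R$: you build the candidate $p=\alpha+(\gamma-\alpha-\beta)x+\beta x^2$ from the three given square roots and verify coefficient by coefficient. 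Your reduction of (iii) to (i), (ii) via $h(1)=\gamma^2$ is correct, and your chain --- first normalize signs so $\beta b_1 = \alpha b_3$, then derive $b_1^2-4b_0b_2=-8b_0\alpha\beta$ and $b_3^2-4b_4b_2=-8b_4\alpha\beta$ (using eqs $1,3,5$ and $1,2,4$), then $\delta^2=4b_0h(1)$ and $\alpha\delta'=\beta\delta$ to get (i) and (ii) --- does close up; I checked that feeding the step-2 identity into $\delta^2-4b_0h(1)$ yields $4\alpha(\beta b_1-\alpha b_3)=0$ as you claim. What your route buys is never leaving $R$; what it costs is the sign bookkeeping you already flag. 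Two small points of care worth making explicit in a write-up: (a) in the degeneracy $b_1=b_3=0$ with $b_0b_4\neq 0$, equation $5$ gives only $b_2=\pm 2\alpha\beta$, and one must re-choose the sign of $\beta$ (which is free there since $\beta b_1=\alpha b_3$ is vacuous) to force $b_2=2\alpha\beta$ \emph{before} asserting the step-2 identities; and (b) your step-3 reduction implicitly uses the step-2 identity, not just $b_0=\alpha^2$, $b_4=\beta^2$, $\beta b_1=\alpha b_3$ as your phrasing suggests, so the dependency should be stated. With those clarifications the argument is complete and arguably more uniform than the paper's three-way split.
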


\begin{proof}
	($\Rightarrow$) If $h(x)$ is a square in $R[x]$, then 
	$h(x) = \sum_{i=0}^4 b_i x^i = (\alpha x^2 + \beta x + \delta)^2$ for some $\alpha, \beta, \delta\in R$. We see that  $b_4 = \alpha^2$, $b_0 = \delta^2$, and $\sum_{i=0}^4 b_i = (\alpha+\beta+\delta)^2$ are all  squares in $R$. 
	Each of the coefficients $b_i$ is a polynomial in $\alpha$, $\beta$, $\delta$ and one
	can quickly check that all the cubics in $(\ref{equ:mainequs})$ vanish identically on 
	this parametrization. 
	
	($\Leftarrow$) 
	Let $b_4 = \alpha^2$, $b_0 = \delta^2$, and $\sum_jb_j = \lambda^2$
	for some  $\alpha$, $\delta$, $\lambda \in R$. 
	From $b_0b_3^2=b_1^2b_4$, we see that $\delta b_3 = \pm b_1\alpha$, and  
	replacing $\alpha$ with $-\alpha$ if necessary, we can take $\delta b_3 = b_1 \alpha$. 
	
	If $b_3$ is nonzero, we see from the second equation that 
	$b_4$, and hence $\alpha$, must also be nonzero. Define $\beta = b_3/(2\alpha)\in {\rm frac}(R)$. 
	It follows immediately 
	that $b_0 = \delta^2$, $b_1 = 2 \delta\beta$,  $b_3 = 2 \beta\alpha$, and  $b_4 = \alpha^2$.
	If $b_3 \neq 0$,  the second equation implies that  
	\[b_2 = \frac{1}{4b_3b_4}(b_3^3 + 8b_1b_4^2) 
	=  \frac{1}{8\beta\alpha^3}(8\beta^3\alpha^3 + 16\delta\beta\alpha^4) = \beta^2 + 2 \delta\alpha,\]
	from which we conclude that $(\alpha x^2 + \beta x + \delta)^2 = h(x)$. 	
	Similarly, if $b_1$ is nonzero then so are $b_0$ and $\delta$. We can define $\beta = b_1/(2\delta) $ and use $4b_0b_1b_2 = b_1^3+ 8b_0^2b_3$  to conclude that $(\alpha x^2 + \beta x + \delta)^2 = h(x)$.	
	In either case, evaluating at $x=1$ gives that $\alpha + \beta + \delta = \pm \lambda$, 
	and $\beta =  \pm \lambda - \alpha - \delta \in R$. 
	
	If $b_1 = b_3 = 0$, the equations simplify to 
	$4b_4(b_2^2- 4b_0b_2)=0$ and  $4b_0(b_2^2 - 4b_0b_2)=0$. 
	If $b_0$ or $b_4$ is nonzero, then $b_2 = \pm \delta \alpha$ and $h(x)$ is 
	$(\alpha x^2 \pm \delta)^2$.  Otherwise  
	$b_0=b_1 = b_3 = b_4 = 0$, in which case $b_2 = \lambda^2$ and  $h(x) = (\lambda x)^2$. 
\end{proof}

\begin{cor}\label{cor:quarticSquare} 
	A quartic $h(x) = \sum_{j=0}^4 b_j x^j$ 
	is a square in $R[x]$ if and only if for all $\gamma$ in $ {\rm SL}_2(\{0,\pm1\})$,  
	$(\gamma\cdot h)_{x=0}$ is a square in $R$ and 
	$B_x(\gamma\cdot h) = C_x(\gamma\cdot h)=D_x(\gamma\cdot h) =0$, where 
	\[
	B_x(h) = b_4 b_1^2 - b_3^2 b_0,  \ \
	C_x(h)= b_1^3 - 4 b_0b_1b_2 + 8b_0^2b_3,   \text{ and }
	D_x(h) =b_1^2 b_2 - 4 b_0 b_2^2 + 2 b_0 b_1 b_3 + 16 b_0^2 b_4.
	\]
\end{cor}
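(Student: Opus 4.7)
\smallskip

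The plan is to reduce Corollary~\ref{cor:quarticSquare} directly to Lemma~\ref{lem:squareCubics}, using the observation that a small number of specific elements of $\mathrm{SL}_2(\{0,\pm 1\})$ already produce, among the orbit of the listed conditions, every one of the hypotheses of Lemma~\ref{lem:squareCubics}.

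For the forward direction, if $h$ is a square in $R[x]$, write $h = p^2$ with $p\in R[x]_{\leq 2}$. For any $\gamma\in \mathrm{SL}_2(R)$ acting on polynomials of degree $\leq 4$, a linear change of variables gives $\gamma\cdot h = (\gamma\cdot p)^2$, where $\gamma\cdot p$ is computed using the degree-$2$ action. Evaluating at $x=0$ shows $(\gamma\cdot h)|_{x=0}$ is a square in $R$, and Lemma~\ref{lem:squareCubics} applied to $\gamma\cdot h$ in particular gives $B_x(\gamma\cdot h)=C_x(\gamma\cdot h)=D_x(\gamma\cdot h)=0$.

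For the converse, I will pick three specific elements of $\mathrm{SL}_2(\{0,\pm 1\})$ whose conditions jointly supply all five cubic equations and all three square conditions of Lemma~\ref{lem:squareCubics}. Taking $\gamma = I$ yields directly that $b_0$ is a square together with $B_x(h)=C_x(h)=D_x(h)=0$, which are the first, third, and fifth equations of~\eqref{equ:mainequs}. Taking the reversal $\gamma_r = \bigl(\begin{smallmatrix}0 & -1\\ 1 & 0\end{smallmatrix}\bigr)$, a routine computation gives $(\gamma_r\cdot h)(x) = b_0 x^4 - b_1 x^3 + b_2 x^2 - b_3 x + b_4$, so $(\gamma_r\cdot h)|_{x=0} = b_4$ is a square; substituting the coefficients $(c_0,\ldots,c_4)=(b_4,-b_3,b_2,-b_1,b_0)$ into the definitions of $C_x$ and $D_x$ yields
\[
C_x(\gamma_r\cdot h) = -(b_3^3 - 4 b_4 b_3 b_2 + 8 b_4^2 b_1), \qquad D_x(\gamma_r\cdot h) = b_3^2 b_2 - 4 b_4 b_2^2 + 2 b_4 b_3 b_1 + 16 b_4^2 b_0,
\]
whose vanishing supplies the second and fourth equations of~\eqref{equ:mainequs}; note $B_x(\gamma_r\cdot h) = -B_x(h)$ contributes nothing new. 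Finally, taking the shift $\gamma_s = \bigl(\begin{smallmatrix}1 & 1\\ 0 & 1\end{smallmatrix}\bigr)$ gives $(\gamma_s\cdot h)(x) = h(x+1)$, hence $(\gamma_s\cdot h)|_{x=0} = h(1) = \sum_{j=0}^4 b_j$, which the hypothesis declares to be a square. All three square hypotheses and all five cubic equations of Lemma~\ref{lem:squareCubics} are therefore in hand, and the lemma yields that $h$ is a square in $R[x]$.

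There is no serious obstacle here: the only genuine content is the reversal symmetry, which pairs the asymmetric equations of~\eqref{equ:mainequs} and so trims the five equations down to the three listed in the corollary; the remaining verifications are sign-bookkeeping substitutions into the defining formulas of $B_x$, $C_x$, and $D_x$.
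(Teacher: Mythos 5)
Your proof is correct and follows essentially the same approach as the paper's: reduce to Lemma~\ref{lem:squareCubics} by checking that the identity, a matrix implementing $x\mapsto -1/x$, and a shift matrix for $x\mapsto x+1$ together recover all three square conditions and all five cubic equations~\eqref{equ:mainequs}. Your $\gamma_r = \bigl(\begin{smallmatrix}0 & -1\\ 1 & 0\end{smallmatrix}\bigr)$ is the negative of the paper's $\gamma_1 = \bigl(\begin{smallmatrix}0 & 1\\ -1 & 0\end{smallmatrix}\bigr)$ and hence acts identically on degree-$4$ polynomials; otherwise the argument is the same, with the sign bookkeeping the paper leaves to the reader made explicit.
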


\begin{proof}
	It suffices to show that we can recover the conditions in Lemma~\ref{lem:squareCubics}, 
	which we can do this with three elements of $\SL_2(\{0,\pm1\})$: the identity, $\gamma_1 = {\small \begin{pmatrix} 0 & 1 \\ -1 & 0 \end{pmatrix}}$ and 
	$\gamma_2 = {\small \begin{pmatrix} 1 & 1 \\ 0 & 1 \end{pmatrix}}$, representing the 
	fractional linear transformations $x\mapsto -1/x$ and $x \mapsto x+1$, respectively. 
	Note that $(\gamma_1\cdot h)(0) = b_4$ and $(\gamma_2\cdot h)(0) = h(1) = \sum_j b_j$,
	so from (i), we recover that all of these are squares in $R$. 
	The element $\gamma_1$ induces the transposition $b_{k}\mapsto (-1)^kb_{4-k}$ for each $k$.
	One can quickly check that we recover the two missing cubics from this action of $\gamma_1$. 
\end{proof}

\begin{remark}
	The ideal generated by the five cubics in Lemma~\ref{lem:squareCubics} is not saturated with respect to 
	the ideal $\langle b_0, \hdots, b_4\rangle$.  Its saturation is minimally generated 
	by these five cubics together with  $-b_1b_3^2+4b_1b_2b_4-8b_0b_3b_4$ and $-b_1^2b_3+4b_0b_2b_3-8b_0b_1b_4$.
\end{remark}

Note that the coefficients of ${\rm Discr}_y(\gamma\cdot g)$ have degree two 
in the coefficients $c_\alpha$ of $g$, and so the polynomials listed in (ii) above 
have degree six.  For example,
\begin{align*}
	B_x({\rm Discr}_y(\gamma\cdot g)) = & \ 4(c_{01}c_{11} - 2(c_{10}c_{02} + c_{00}c_{12}))^2(c_{21}^2 - 4c_{20}c_{22})\\
										& - 4(c_{01}^2 - 4c_{00}c_{02})(c_{11}c_{21} - 2(c_{20}c_{12} + c_{10}c_{22}))^2.
\end{align*}

\begin{thm}\label{thm:prod}
	Let $R$ be a unique factorization domain with ${\rm char}(R)\neq 2$ and $|R|\geq 13$. 
	A polynomial $g = \sum_{\alpha \in \{0,1,2\}^n}c_{\alpha}{\bf x}^{\alpha}\in R[{\bf x}]$
	is the product of multiaffine polynomials if and only 
	if for all $\gamma\in \SL_2(R)^n \rtimes S_n$, 
	\begin{itemize}
		\item[(i)] ${\rm Discr}_{x_1}(\gamma\cdot g)|_{x_2 = \hdots = x_n =0} 
		= \gamma\cdot (c_{1{\bf 0}}^2 - 4c_{0{\bf 0}}c_{2{\bf 0}}) $ is a square in $R$,  
		\item[(ii)] the sextic polynomials in ${\bf c}$ given by specializing 
		$B_{x_2}({\rm Discr}_{x_1}(\gamma\cdot g))$, $C_{x_2}({\rm Discr}_{x_1}(\gamma\cdot g))$
		and $D_{x_2}({\rm Discr}_{x_1}(\gamma\cdot g))$ to $x_3 = \hdots = x_n =0$ are all zero. 
	\end{itemize}
\end{thm}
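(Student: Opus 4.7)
The proof has two directions. The forward direction reduces to a direct computation, while the backward direction reduces (via a globalization argument) to showing ${\rm Discr}_{x_1}(g)$ is a square in $R[x_2, \hdots, x_n]$; Lemma~\ref{lem:UnivariateQuadraticFactor} together with the $S_n$-action then forces every irreducible factor of $g$ in $R[{\bf x}]$ to be multiaffine.

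For the forward direction, suppose $g$ is a product of multiaffine polynomials. Since the $\SL_2(R)^n \rtimes S_n$-action preserves this property, $\gamma \cdot g$ is also such a product for every $\gamma$. Restricting to $x_3 = \hdots = x_n = 0$ and grouping the factors, we may write $(\gamma \cdot g)|_{x_3 = \hdots = 0} = p(x_2) (A_1 + B_1 x_1)(A_2 + B_2 x_1)$, where $p \in R[x_2]$ collects the factors independent of $x_1$ and $A_i, B_i \in R[x_2]$ are each of degree $\leq 1$. A direct computation gives
\[
{\rm Discr}_{x_1}(\gamma \cdot g)|_{x_3 = \hdots = 0} \ = \ p(x_2)^2 (A_1 B_2 - A_2 B_1)^2,
\]
which is a square in $R[x_2]$. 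Condition (i) follows by further specializing at $x_2 = 0$, and condition (ii) follows from the forward direction of Lemma~\ref{lem:squareCubics}.

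For the backward direction, assume (i) and (ii) for all $\gamma$. Fix $\gamma_0 \in \SL_2(R)^n$; since ${\rm Discr}_{x_1}$ is invariant under the $\SL_2(R)$-action on $x_1$, applying the hypothesis to the compositions $\gamma' \cdot \gamma_0$ as $\gamma'$ ranges over $\SL_2(\{0, \pm 1\})$ acting only on $x_2$ is exactly the hypothesis of Corollary~\ref{cor:quarticSquare} for the quartic ${\rm Discr}_{x_1}(\gamma_0 \cdot g)|_{x_3 = \hdots = 0} \in R[x_2]$. Hence this quartic is a square in $R[x_2]$. Taking $\gamma_0$ to act as the identity on $x_1, x_2$ and by translations $x_j \mapsto x_j + a_j$ on the remaining variables, we deduce that for every $(a_3, \hdots, a_n) \in R^{n-2}$, the polynomial ${\rm Discr}_{x_1}(g)(x_2, a_3, \hdots, a_n) \in R[x_2]$ is a square. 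I then apply Corollary~\ref{cor:quarticSquare} over the UFD $R[x_3, \hdots, x_n]$ to the quartic ${\rm Discr}_{x_1}(g) \in R[x_3, \hdots, x_n][x_2]$: the cubic identities $B_{x_2} = C_{x_2} = D_{x_2} = 0$ in $R[x_3, \hdots, x_n]$ follow from their pointwise vanishing on $R^{n-2}$ via polynomial interpolation (using $|R| \geq 13$ to exceed the degree bound in each variable), and the squareness of the coefficients $b_0$, $b_4$, $\sum_j b_j$ in $R[x_3, \hdots, x_n]$ is established from pointwise squareness by a recursive application of the same square criteria on polynomials of bounded degree in one fewer variable. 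Once ${\rm Discr}_{x_1}(g)$ is shown to be a square in $R[x_2, \hdots, x_n]$, Lemma~\ref{lem:UnivariateQuadraticFactor} applied over $R[x_2, \hdots, x_n]$ forces every irreducible factor of $g$ in $R[{\bf x}]$ to have degree $\leq 1$ in $x_1$; by the $S_n$-action, the same holds for each variable, so every irreducible factor of $g$ is multiaffine.

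The principal obstacle is the globalization step in the backward direction: upgrading the pointwise-square property of ${\rm Discr}_{x_1}(g)$ on $R^{n-2}$ to polynomial squareness in $R[x_3, \hdots, x_n]$. While the cubic identities globalize via standard interpolation, the squareness conditions on the specific coefficients are not polynomial identities, so they require a recursive use of the square criteria of Lemma~\ref{lem:squareCubics} and Corollary~\ref{cor:quarticSquare} with careful control of degree bounds at each stage. Managing the $\SL_2$-action consistently through this recursion, so that enough point-evaluations remain available at each level, is where most of the technical care lies.
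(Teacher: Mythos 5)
The forward direction and the interpolation argument for the cubic identities are fine, and you correctly isolate the globalization step as the crux. But that step is a genuine gap, not merely a technical hassle: showing that the coefficients $b_0$, $b_4$, $\sum_j b_j$ of ${\rm Discr}_{x_1}(g)$ (as a quartic in $x_2$) are \emph{polynomial} squares in $R[x_3,\hdots,x_n]$ does not follow from pointwise squareness of their evaluations together with Lemma~\ref{lem:squareCubics} and Corollary~\ref{cor:quarticSquare}, because those results only treat univariate quartics. Your proposed recursion would require a new multivariable lemma of the form ``a polynomial of degree $\leq 4$ in each of $n-2$ variables is a square iff certain restricted squareness conditions and cubic identities hold,'' which is a statement of exactly the same flavor as Theorem~\ref{thm:prod} itself, with its own nontrivial induction. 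You acknowledge the difficulty in your final paragraph but do not set up or prove such a lemma, so the argument does not close.

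The paper avoids this entirely by inducting on $n$ for Theorem~\ref{thm:prod} itself. In the inductive step, one observes that for any $\gamma\in\SL_2(R)$ acting on $x_2$, the polynomial $(\gamma\cdot g)|_{x_2=0}$ lives in $n-1$ variables and inherits the hypotheses of the theorem (the conditions for $g$ over $\SL_2(R)^n\rtimes S_n$ restrict to those for $(\gamma\cdot g)|_{x_2=0}$ over $\SL_2(R)^{n-1}\rtimes S_{n-1}$). The inductive hypothesis then gives that $(\gamma\cdot g)|_{x_2=0}$ factors into multiaffine polynomials, and the easy forward direction of Lemma~\ref{lem:UnivariateQuadraticFactor} immediately gives that $(\gamma\cdot{\rm Discr}_{x_1}(g))|_{x_2=0}={\rm Discr}_{x_1}((\gamma\cdot g)|_{x_2=0})$ is a square in $S=R[x_3,\hdots,x_n]$. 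This supplies exactly the squareness inputs required by Corollary~\ref{cor:quarticSquare} over $S$, with no auxiliary lemma about multivariable squares. To repair your argument, replace the ``recursive square criteria'' step with this reduction to the inductive hypothesis applied to $(\gamma\cdot g)|_{x_2=0}$.
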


\begin{proof}
	We can express $g = \sum_{\beta\in\{0,1,2\}^2}g_{\beta}x_1^{\beta_1}x_2^{\beta_2}$ 
	where $g_{\beta}\in R[x_3, \hdots, x_n]_{\leq {\bf 2}}$. 
	The polynomial $B_{x_2}({\rm Discr}_{x_1}(g))$ 
	has degree six in the coefficients $g_{\beta}$ and so degree $\leq 12$ in each variable $x_j$. 
	
	Consider $I\subset R$ with $|I|=13$. 
	For $\lambda_1 = \lambda_2 = 0$ and $\lambda_3, \hdots, \lambda_n\in I$, 
	consider the element 
	$\gamma = \left({\tiny \begin{pmatrix} 1 & \lambda_j \\ 0 & 1 \end{pmatrix}}\right)_{j}$
	in $\SL_2(R)^n$. 
	For any polynomial $F\in R[{\bf x}]$ the evaluation of 
	$\gamma\cdot F$ at ${\bf x} = 0$ equals the evaluation of $F$ 
	at ${\bf x} = (\lambda_1, \hdots, \lambda_n)$. 
	In particular, (ii) implies that the polynomials 
	$B_{x_2}({\rm Discr}_{x_1}(\gamma \cdot g))$, $C_{x_2}({\rm Discr}_{x_1}(\gamma\cdot g))$
	and $D_{x_2}({\rm Discr}_{x_1}(\gamma\cdot g))$ vanish at the point 
	${\bf x} = (\lambda_1, \hdots, \lambda_n)$ for every choice of $\lambda_j\in I$. 
	Since these polynomials have degree $\leq 12$ in each variable $x_j$ and $|I|\geq 13$, 
	it follows that each of these polynomials is identically zero, using \cite[Lemma 4.1]{AV21}. 
	
	We can now proceed by induction on $n$. The $n=1$ case is the content of 
	Lemma~\ref{lem:UnivariateQuadraticFactor} together with the observation that the 
	discriminant is invariant under the action of $\SL_2(R)$, so we suppose $n\geq 2$. 
	Let $h = {\rm Discr}_{x_1}(g) \in S[x_2]$ where $S = R[x_3, \hdots, x_n]$. 
	By induction, 
	for every $\gamma\in \SL_2(R)$ acting on the variable $x_2$, 
	$(\gamma \cdot g)|_{x_2=0}$ factors into multiaffine polynomials 
	and so $(\gamma \cdot h)|_{x_2=0} = {\rm Discr}_{x_1}((\gamma \cdot g)|_{x_2=0})$ 
	is a square in $S = R[x_3, \hdots, x_n]$.  
	
	By Corollary~\ref{cor:quarticSquare}, it follows that ${\rm Discr}_{x_1}(g)$ 
	is a square in $S[x_2]$. 
	Then by Lemma~\ref{lem:UnivariateQuadraticFactor}, 
	$g$ factors into linear factors in $x_1$ in the ring $S[x_1,x_2] = R[{\bf x}]$. 
	Using the action of $S_n$, we see that every irreducible factor of $g$ must have 
	degree $\leq 1$ in each variable. 
\end{proof}

\begin{remark}\label{rem:DimensionOfEquations}
	For every choice of $i\neq j\in [n]$ and ${\bf \lambda} \in I^{n-3}$ we 
	obtain three equations by evaluating 
	$B_{x_j}({\rm Discr}_{x_i}(g))$, $C_{x_j}({\rm Discr}_{x_i}(g))$
	and $D_{x_j}({\rm Discr}_{x_i}(g))$ at the point $\lambda$, along with 
	additional two polynomials from the two missing analogous polynomials in Lemma~\ref{lem:squareCubics}, 
	which can be recovered from the ${\rm SL}_2$-action on $x_j$. 
	This gives a total of $5n(n-1)13^{n-3}$ sextic equations in the coefficients of $g$. 
\end{remark}

\begin{lem}\label{lem:UnivariateConjFactor}
	Let $S$ be a unique factorization domain  with ${\rm char}(S)\neq 2$ and an automorphic involution $a \rightarrow \overline{a}$ and let $R$ be the fixed ring under this involution.
	The polynomial $g = ax^2 + bx +c\in R[x]$ is a Hermitian square in $S[x]$
	if and only if $a$ and $c$ are Hermitian squares in $S$ and the discriminant ${\rm Discr}_{x}(g)=q^2$ with $q \in S[x]$ and $\overline{q} = - q$.
\end{lem}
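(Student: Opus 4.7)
For the $(\Rightarrow)$ direction, if $g = h\overline{h}$ with $h \in S[x]$, then since $g$ is quadratic in $x$ and $S$ is a domain, $h$ must be linear; writing $h = \alpha x + \beta$ and expanding yields $a = \alpha\overline{\alpha}$, $c = \beta\overline{\beta}$, and $b = \alpha\overline{\beta}+\overline{\alpha}\beta$. A direct computation gives $b^2-4ac = (\alpha\overline{\beta}-\overline{\alpha}\beta)^2$, and setting $q := \alpha\overline{\beta}-\overline{\alpha}\beta$ one immediately verifies $\overline{q} = -q$.

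For the $(\Leftarrow)$ direction, fix $\alpha,\beta \in S$ with $a = \alpha\overline{\alpha}$, $c = \beta\overline{\beta}$, together with $q \in S$ satisfying $b^2-4ac = q^2$ and $\overline{q} = -q$. In the degenerate case $a = 0$ we have $\alpha = 0$, and combining $b^2 = q^2$ with $\overline{b}=b$ and $\overline{q}=-q$ forces $b = 0$ (using $\operatorname{char}(S) \neq 2$), so $g = c = \beta\overline{\beta}$ and $h := \beta$ works.

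Suppose now $a \neq 0$. The plan is to apply Lemma~\ref{lem:UnivariateQuadraticFactor} to obtain a factorization $g = H_1 H_2$ in $S[x]$, where $H_1 = a_2 x + c_1$ and $H_2 = a_1 x + c_2$ satisfy $a_1 a_2 = a$, $c_1 c_2 = c$, $b - q = 2 a_1 c_1$, and $b + q = 2 a_2 c_2$. Since $\overline{g} = g$, unique factorization in the UFD $S[x]$ leaves two possibilities: either both factors are individually self-conjugate up to units of $S^*$, or the involution swaps them up to a unit. In the former case, a direct calculation using $q = a_2 c_2 - a_1 c_1$ shows that $\overline{q} = q$; combined with the hypothesis $\overline{q} = -q$, this forces $q = 0$ and the two factors to be associate, which collapses into the latter case. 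So $\overline{H_1} = v H_2$ for some $v \in S^*$, and applying the involution twice together with $\overline{g} = g$ yields $v \in R^*$.

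To finish, the plan is to construct $\mu \in S^*$ with $\mu\overline{\mu} = v^{-1}$ and then set $h := \mu H_1$; a direct calculation using $H_1\overline{H_1} = vH_1H_2 = vg$ then gives $h\overline{h} = \mu\overline{\mu}\cdot vg = g$. The main obstacle will be showing that $v^{-1}$ is a norm of a unit of $S$. From $\overline{a_2} = v a_1$ and $a = a_1 a_2 = \alpha\overline{\alpha}$, we get $v = (a_2\overline{a_2})/(\alpha\overline{\alpha}) \in R^*$; in particular $v_p(a_2\overline{a_2}) = v_p(\alpha\overline{\alpha})$ for every prime $p$ of $S$. Combined with the freedom, at each pair of conjugate split primes of $S$, to reassign the exponents of $\alpha$ without changing $\alpha\overline{\alpha} = a$, one can arrange $v_p(\alpha) = v_p(a_2)$ for every prime $p$. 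Then $\alpha$ and $a_2$ are associate in $S$, so $\alpha = a_2 \cdot u$ for some $u \in S^*$, and $u\overline{u} = (\alpha\overline{\alpha})/(a_2\overline{a_2}) = v^{-1}$; taking $\mu := u$ completes the construction.
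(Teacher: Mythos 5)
Your $(\Rightarrow)$ direction matches the paper's. For $(\Leftarrow)$ the paper argues directly in $S$: it writes $(b-q)(b+q) = (b-q)\overline{(b-q)} = 4s\overline{s}\,t\overline{t}$ and asserts, after redistributing the prime factors between $s$ and $t$, that one may arrange $b+q = 2st$, whence $g=(sx+t)(\overline{s}x+\overline{t})$ by a short computation. You instead invoke Lemma~\ref{lem:UnivariateQuadraticFactor} to factor $g = H_1H_2$ and analyze how the involution acts on the two factors. Both ultimately rest on the same ``distribute the primes'' idea, but your route through $S[x]$ introduces complications, and one of them is a genuine gap.

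The problem is the dichotomy and your treatment of its first case. The statement ``either both factors are individually self-conjugate up to units, or the involution swaps them'' appeals to unique factorization in $S[x]$, but $H_1 = a_2x+c_1$ and $H_2 = a_1x+c_2$ from Lemma~\ref{lem:UnivariateQuadraticFactor} need not be irreducible in $S[x]$: each can carry a non-unit constant content, and the content and the primitive linear part can transform differently under conjugation. More concretely, your Case~1 concludes $q=0$ and then asserts ``the two factors to be associate, which collapses into the latter case.'' That implication is false. Take $S=\C[y,z]$ with $\overline y = z$, $a=c=yz$, $b=2yz$, $q=0$, so $g=yz(x+1)^2$; Lemma~\ref{lem:UnivariateQuadraticFactor} can produce $H_1=x+1$, $H_2=yz(x+1)$, which are each self-conjugate and are not associates (and the involution certainly does not swap them). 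The conclusion $g = \bigl(y(x+1)\bigr)\overline{\bigl(y(x+1)\bigr)}$ still holds, but your argument does not reach it; you need a separate argument when $q=0$, or a choice of $H_1,H_2$ that is guaranteed to be swapped by the involution. (The exponent-reassignment in your Case~2 is essentially sound and not circular, since at a split pair $p,\overline p$ replacing $\alpha=p^m\overline p^n\beta$ by $p^{m'}\overline p^{n'}\beta$ with $m'+n'=m+n$ leaves $\alpha\overline\alpha$ unchanged exactly, though you should say a word about the forced valuations at inert and ramified primes.)
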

\begin{proof}
	($\Rightarrow$) If $g$ factors into two conjugates $(sx + t)(\overline{s} x + \overline{t})$, then $a = s \overline{s}$ and $c = t \overline{t}$ and 
	\[
	{\rm Disc}_x(g) = b^2 - 4 a c = (s \overline{t} + t \overline{s})^2 - 4 s \overline{s} t \overline{t} = (s \overline{t} - t \overline{s})^2
	\]
	which satisfies the desired property.
	
	($\Leftarrow$) Assume that $b^2 - 4ac = q^2$ such that $\overline{q} = -q$. If $a = 0$, then $b = \pm q$ and thus $\overline{b} = -b$. Since $b \in R$, 
	then $b=0$ and $g = c$ is a Hermitian square as desired. If $a \neq 0$, 
	then $(b-q)(b+q) = (b-q)(b-\overline{q}) = 4ac = 4 s \overline{s} t \overline{t}$, 
	where $a = s \overline{s}$ and $c = t \overline{t}$. Thus, after relabeling if needed, we may assume that $b-\overline{q} = 2st$. 
	Thus, we can write $g$ as
	\[
	g = a\left(x - \frac{-b+\overline{q}}{2a}\right)\left(x - \frac{-b+q}{2a}\right) = s \overline{s}\left(x + \frac{t}{\overline{s}}\right)\left(x + \frac{\overline{t}}{s}\right)
	= (sx + t)(\overline{s}x + \overline{t}).
	\]
\end{proof}

\begin{thm}\label{thm:conj}
	Let $S$ be a unique factorization domain with ${\rm char}(S)\neq 2$ and an automorphic involution $a\mapsto \overline{a}$.  Let $R$ be the fixed ring of this automorphism with $|R|\geq 13$. The polynomial $g = \sum_{\alpha \in \{0,1,2\}^n}c_{\alpha}{\bf x}^{\alpha}$ in $  R[{\bf x}]_{\rm MQ}$ is a Hermitian square if and only if $(\gamma \cdot g)|_{x_3=\hdots = x_n = 0}$ is a Hermitian square in $ S[x_1, x_2]$ for all $\gamma \in \SL_2(R)^n\rtimes S_n$.	
\end{thm}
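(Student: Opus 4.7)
The $(\Rightarrow)$ direction is immediate: any $\gamma \in \SL_2(R)^n \rtimes S_n$ has coefficients in $R$, so its action commutes with the involution $\bar\cdot$. Hence $g = h\bar h$ implies $\gamma \cdot g = (\gamma \cdot h)\,\overline{\gamma \cdot h}$, and specializing $x_3 = \cdots = x_n = 0$ preserves the Hermitian square structure. For $(\Leftarrow)$, I induct on $n \geq 2$. The base case $n = 2$ is immediate by taking $\gamma = e$. For $n \geq 3$, view $g$ as a quadratic in $x_n$: $g = A x_n^2 + B x_n + C$ with $A, B, C \in R[x_1, \ldots, x_{n-1}]_{\leq \mathbf{2}}$. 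Lemma~\ref{lem:UnivariateConjFactor}, applied over the UFD $S[x_1, \ldots, x_{n-1}]$ with its induced involution (and fixed ring $R[x_1, \ldots, x_{n-1}]$), reduces the conclusion to showing (i) $A$ and $C$ are Hermitian squares in $S[x_1, \ldots, x_{n-1}]$, and (ii) $D := B^2 - 4AC = q^2$ for some $q \in S[x_1, \ldots, x_{n-1}]$ with $\bar q = -q$.

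For (i), observe that $C = g|_{x_n = 0}$ and $A = (\gamma_n \cdot g)|_{x_n = 0}$ where $\gamma_n = \bigl(\begin{smallmatrix} 0 & 1 \\ -1 & 0\end{smallmatrix}\bigr)$. Combining these two choices of $\gamma_n$ with arbitrary $\delta \in \SL_2(R)^{n-1} \rtimes S_{n-1}$ acting on $x_1, \ldots, x_{n-1}$, the theorem's hypothesis on $g$ translates directly into the theorem's hypothesis for $A$ and $C$ as multiquadratic polynomials in $n - 1$ variables, so (i) follows from the inductive hypothesis. For (ii), using the $\SL_2$-translations $\bigl(\begin{smallmatrix} 1 & \mu \\ 0 & 1\end{smallmatrix}\bigr)$ together with permutations from $S_n$ to shuffle which variable plays the role of $x_2$, for each $i \in [n-1]$ and $\mu \in R^{n-2}$ the specialization $g|_{x_k = \mu_k,\ k \in [n-1]\setminus\{i\}}$ is a Hermitian square in $S[x_i, x_n]$. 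Viewing this as a quadratic in $x_n$ and applying Lemma~\ref{lem:UnivariateConjFactor} (with (i) in hand), one concludes that $D|_{x_k = \mu_k,\ k \neq i}$ is a square of an anti-invariant element in $S[x_i]$ for every such $i$ and $\mu$.

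The crux is then a multivariate lifting lemma: if $D \in R[y_1, \ldots, y_m]$ has all univariate specializations at points of $R^{m-1}$ equal to squares of anti-invariants in the appropriate $S[y_i]$, then $D = q^2$ in $S[y_1, \ldots, y_m]$ with $\bar q = -q$. I prove this by induction on $m$ in two stages. First, factoring $D = u\prod P_j^{e_j}$ in the UFD $S[y_1, \ldots, y_m]$ with $u \in S^*$, a generic specialization preserves irreducibility and non-association of distinct $P_j$'s, so the pointwise-square hypothesis forces each $e_j$ to be even and $u$ to be a square in $S$; hence $D = q^2$ in $S[y_1, \ldots, y_m]$. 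Second, since $D \in R[\mathbf{y}]$ we have $\bar q = \pm q$, and $q|_\mu = \pm q_\mu$ is anti-invariant for each of the infinitely many $\mu$; a polynomial identity argument in each variable (using $|R| \geq 13$) applied to the invariant polynomial $q + \bar q \in R[y_1, \ldots, y_m]$ then forces $\bar q = -q$. The main obstacle is the first stage of the lifting lemma, since multiplicities in the UFD factorization of $D$ can collapse at non-generic specializations; controlling this requires a careful Zariski-density argument bounding the bad locus in $R^{m-1}$.
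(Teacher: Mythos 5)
Your forward direction and the inductive skeleton of the converse track the paper's: both peel off the outer coefficients of $g$ as a quadratic in a chosen variable, handle those by the induction hypothesis, and reduce via Lemma~\ref{lem:UnivariateConjFactor} to showing the discriminant $D=B^2-4AC$ equals $q^2$ with $\bar q=-q$. The gap is the ``multivariate lifting lemma'' you invoke for $D$, which you flag as the crux and do not prove. The proof you sketch does not hold: the claim that a generic specialization preserves irreducibility of a factor $P_j$ is false in general (e.g.\ $y_1^2-y_2$ is irreducible but splits as $(y_1-a)(y_1+a)$ at every point $y_2=a^2$), so the step forcing each exponent $e_j$ to be even does not follow from generic specialization, and the Zariski-density control of the ``bad locus'' that you defer is precisely the nontrivial content of the problem, not a routine cleanup.

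The paper does not need such a lemma. It first invokes Theorem~\ref{thm:prod} --- proved earlier from the explicit, degree-bounded identities of Lemma~\ref{lem:squareCubics} via Corollary~\ref{cor:quarticSquare} and a density argument using $|R|\ge 13$ --- to conclude that $g$ factors into multiaffine polynomials over $S$; Lemma~\ref{lem:UnivariateQuadraticFactor} then yields $D=q^2$ for free, with no need to lift ``all specializations are squares'' directly. Anti-invariance is settled separately and cheaply: since $q^2\in R[x_2,\dots,x_n]$, one has $\bar q=\pm q$, and if $\bar q=q$ then each $(\gamma\cdot q)|_{x_2=\cdots=x_n=0}$ is simultaneously invariant and, by Lemma~\ref{lem:UnivariateConjFactor} applied to the Hermitian square $(\gamma\cdot g)|_{x_2=\cdots=x_n=0}$, anti-invariant, hence zero; the density lemma then forces $q\equiv 0$. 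The cleanest repair of your proposal is to cite Theorem~\ref{thm:prod} at the start rather than attempt to reprove a harder version of it from scratch.
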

\begin{proof}
	If for all $\gamma \in \SL_2(R)^n\rtimes S_n$, the polynomial $(\gamma \cdot g)|_{x_3=\hdots = x_n = 0}$ is a Hermitian square in $S[x_1,x_2]$, then by Lemma~\ref{lem:UnivariateQuadraticFactor}, ${\rm Discr}_{x_1}(\gamma\cdot g)|_{x_3=\hdots = x_n = 0} $ is a square in $S[x_2]$ . Using Corollary~\ref{cor:quarticSquare} we see that the two conditions of Theorem~\ref{thm:prod} are satisfied and hence  we deduce that $g$ is a product of multiaffine polynomials in $S[{\bf x}]$. 
	To prove that $g$ is a Hermitian square, we will proceed by induction on $n$. The case $n=2$ is trivially satisfied. For the inductive step, write $g$ as $g = p_2 x_1^2 + p_1 x_1 + p_0$ for some $p_2,p_1,p_0 \in \tilde{R} = R[x_2,\hdots,x_{n}]$. By induction we see that $p_2$ and $p_0$ are both Hermitian squares and as $g$ is a product of multiaffine polynomials, then by Lemma~\ref{lem:UnivariateQuadraticFactor}, we see that ${\rm Disc}_{x_1}(g) = p_1^2 - 4p_2 p_0 = q^2$
	for some $q\in S[x_2,\hdots,x_{n}]$. Since $p_1^2 - 4 p_2 p_0 \in \tilde{R}$, then $q^2 \in \tilde{R}$ and so $q= -\overline{q}$ or $q = \overline{q}$. In the former case, Lemma~\ref{lem:UnivariateConjFactor} implies that $g$ is a Hermitian square and we are done. Otherwise we get $(\gamma \cdot q)|_{{\bf x} = {\bf 0}}= \overline{(\gamma \cdot q)|_{{\bf x} = {\bf 0}}}$ for all $\gamma \in \SL_2(R)^{n-1}$. Notice that by induction on the other hand, $(\gamma\cdot g)|_{{\bf x} = {\bf 0}}$ is a Hermitian square and hence 
	\[
	{\rm Disc}_{x_1}((\gamma \cdot g)_{{\bf x} = {\bf 0}}) = \left(\gamma\cdot(p_1^2 - 4p_2 p_0)\right)_{{\bf x} = {\bf 0}} = (\gamma \cdot q)_{{\bf x} = {\bf 0}}^2 	\text{ with } (\gamma \cdot q)|_{{\bf x} = {\bf 0}}=- \overline{(\gamma \cdot q)|_{{\bf x} = {\bf 0}}}.
	\] 
     Thus we conclude that $(\gamma \cdot q)|_{{\bf x} = {\bf 0}}=0$ for all $\gamma \in \SL_2(R)^{n-1}$. Consider $\gamma = (\gamma_i)_{2\leq i \leq n}$ where $\gamma_i = \tiny\begin{pmatrix}
		1 & \lambda_i \\ 0 & 1
	\end{pmatrix}$ for $\lambda_i \in R$. Notice that $\gamma\cdot q|_{(x_2 = \cdots = x_n = 0)} = q|_{(x_2 = \lambda_2, \hdots,x_n = \lambda_{n})}=0$. Since $|R|\geq 3$, \cite[Lemma~4.1]{AV21} implies that $q \equiv 0$ and thus $q=-\overline{q}$ and we apply Lemma~\ref{lem:UnivariateConjFactor} again to deduce that $g$ is a Hermitian square.
	\end{proof}
	Let $\F$ be a field of ${\rm char}(\F)\neq 2$ with $|\F|\geq 13$ and $\K$ be a degree two extension field. 
	Let $\delta$ denote the square root of the discriminant of the minimal polynomial of this field extension. 
	Then $\K = \F(\delta)$ and the involution $\delta\longrightarrow \overline{\delta} = -\delta$ extends to an automorphism of $\K$ with 
	fixed field $\F$.	
	
	\begin{remark}\label{rk:pureImg}
		In the field $\K$, $\overline{q} = -q$ is equivalent to requiring $q = \delta r$ for some $r \in \F$.
	\end{remark}
	
	\begin{lem}\label{lem:TwoVarConjFactor}
		Let $g = \sum_{\alpha \in \{0,1,2\}^2}c_{\alpha}{\bf x}^{\alpha}\in \F[x_1,x_2]_{MQ}$. The polynomial $g$ is a Hermitian square in $\K[x_1,x_2]$
		if and only if for all $\gamma \in \SL_2(\{0,\pm 1\})^2\rtimes S_2$:
		\begin{itemize}
			\item[(i)] $\gamma \cdot c_{(0,0)}$ is a Hermitian square in $\K$.
			\item[(ii)] $\frac{1}{\delta^2}{\rm Discr}_{x_1}(\gamma\cdot g)$ is a square in $\F[x_2]$.
			%\item[(ii)] $\frac{1}{\delta^2}{\rm Discr}_{x_1}(\gamma\cdot g)|_{x_2 =0} 
			%= \gamma\cdot \left(\frac{1}{\delta^2}(c_{1{0}}^2 - 4c_{0{ 0}}c_{2{ 0}})\right) $ is a square in $\F$,
			%\item[(iii)] the sextic polynomials in ${\bf c}$ given by  
			%$B_{x_2}\left({\rm Discr}_{x_1}(\gamma\cdot g)\right)$, $C_{x_2}\left({\rm Discr}_{x_1}(\gamma\cdot g)\right)$ and
			%  $D_{x_2}\left({\rm Discr}_{x_1}(\gamma\cdot g)\right)$ are all zero.   
		\end{itemize}
	\end{lem}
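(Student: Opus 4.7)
The plan is to apply Lemma~\ref{lem:UnivariateConjFactor} three times in two different base rings: once to $g$ viewed as a univariate quadratic in $x_1$ with coefficients in $\F[x_2]$, and twice more to the $x_1$-leading and $x_1$-constant coefficients $c_{20}(x_2)$ and $c_{00}(x_2)$, viewed as univariate quadratics in $x_2$ with coefficients in $\F$.

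For the forward direction, if $g = h\overline{h}$ with $h = h_1(x_2)x_1 + h_0(x_2)$ multiaffine, then for any $\gamma \in \SL_2(\{0,\pm 1\})^2\rtimes S_2$ we have $\gamma\cdot g = (\gamma\cdot h)\overline{(\gamma\cdot h)}$, since $\gamma$ has entries fixed by the involution. Evaluating at ${\bf x} = 0$ gives (i). A direct expansion yields $\text{Discr}_{x_1}(g) = (h_1\overline{h_0} - h_0\overline{h_1})^2$, and $q := h_1\overline{h_0} - h_0\overline{h_1}$ satisfies $\overline{q} = -q$. By Remark~\ref{rk:pureImg}, $q = \delta\, r(x_2)$ with $r\in \F[x_2]$, so $\text{Discr}_{x_1}(g)/\delta^2 = r(x_2)^2$ is a square in $\F[x_2]$; applying the same argument to $\gamma\cdot g$ in place of $g$ gives (ii) for every $\gamma$.

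For the reverse direction, we first use (i) with the four elements of $\SL_2(\{0,\pm 1\})^2$ obtained from the identity and $J := \tiny{\begin{pmatrix} 0 & 1 \\ -1 & 0 \end{pmatrix}}$ acting on either or both coordinates: in each case the value of the transformed polynomial at the origin extracts one of the four corner coefficients $c_{00}, c_{02}, c_{20}, c_{22}$, showing they are all Hermitian squares in $\K$. We then apply Lemma~\ref{lem:UnivariateConjFactor} with $S = \K$ and $R = \F$ to $c_{00}(x_2), c_{20}(x_2) \in \F[x_2]$: the corner conditions are already verified, and the remaining discriminant conditions $c_{01}^2 - 4c_{00}c_{02} = (\delta\alpha)^2$ and $c_{21}^2 - 4c_{20}c_{22} = (\delta\beta)^2$ for some $\alpha, \beta \in \F$ follow from the observation that these expressions are exactly the constant-in-$x_2$ terms of $\text{Discr}_{x_1}(\sigma\cdot g)$ and $\text{Discr}_{x_1}(\sigma J\cdot g)$ respectively, where $\sigma\in S_2$ swaps $x_1$ and $x_2$ and $J$ acts on the first coordinate; by (ii) both discriminants equal $\delta^2$ times a square in $\F[x_2]$, whose constant terms are therefore squares in $\F$.

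Finally, with $c_{00}(x_2)$ and $c_{20}(x_2)$ now known to be Hermitian squares in $\K[x_2]$ and the discriminant condition $\text{Discr}_{x_1}(g) = (\delta r(x_2))^2$ given directly by (ii) at $\gamma = {\rm id}$, a third application of Lemma~\ref{lem:UnivariateConjFactor}---this time with $S = \K[x_2]$, $R = \F[x_2]$, applied to $g$ as a quadratic in $x_1$ over $R$---produces the desired Hermitian factorization $g = h\overline{h}$ in $\K[x_1, x_2]$. No individual step is deep; the real work is the bookkeeping that identifies which elements of $\SL_2(\{0,\pm 1\})^2\rtimes S_2$ realize each of the constant terms needed as discriminant evaluations, together with the iterated use of Lemma~\ref{lem:UnivariateConjFactor} in two different base rings.
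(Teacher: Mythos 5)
Your proof is correct and follows essentially the same strategy as the paper: two nested applications of Lemma~\ref{lem:UnivariateConjFactor} (once over $R=\F[x_2]$, $S=\K[x_2]$ for $g$ as a quadratic in $x_1$, and once over $R=\F$, $S=\K$ for the top and bottom $x_1$-coefficients), translating the ``pure imaginary square'' discriminant condition via Remark~\ref{rk:pureImg}. You are somewhat more explicit than the paper about exactly which elements of $\SL_2(\{0,\pm 1\})^2\rtimes S_2$ realize the required corner coefficients and constant terms of discriminants, where the paper states the analogous conditions in terms of $\mathrm{Discr}_{x_2}$ and leaves the dictionary with condition (ii) implicit.
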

	\begin{proof}
		Write $g$ as $g = p_2 x_1^2 + p_1 x_1 + p_0$ where $p_2,p_1$ and $p_0$ are quadratics in $\F[x_2]$. Using Lemma~\ref{lem:UnivariateConjFactor}, we see that $g$ is a product of two conjugate factors if and only if $p_2$ and $p_0$ are product of two conjugates in $\K[x_2]$ and ${\rm Disc}_{x_1}g = q^2$ where $\overline{q} = -q$ for some $q \in \K[ x_2]$. Notice that by Remark~\ref{rk:pureImg}, this condition is equivalent to $q = \delta r$ where $r \in \F[{x_2}]$ and thus requiring that $\frac{1}{\delta^2}{\rm Disc}_{x_1}g$ is a square in $\F[x_2]$. Using Lemma~\ref{lem:UnivariateConjFactor},  $p_2$ and $p_0$ are conjugates if and only if $c_{(i,j)}$ is a product of two conjugates for $i,j\in \{0,2\}$ and $\frac{1}{\delta^2}{\rm Discr}_{x_2}(\gamma\cdot g)|_{x_1 =0}$ is a square for $\gamma \in \SL_2(\F)$ and this gives the desired equivalence.
	\end{proof}

	\begin{thm}\label{thm:Conj}
		A polynomial $g = \sum_{\alpha \in \{0,1,2\}^n}c_{\alpha}{\bf x}^{\alpha}\in \F[{\bf x}]$
		is a Hermitian square in $\K[{\bf x}]$ if and only 
		if for all $\gamma\in \SL_2(\F)^n \rtimes S_n$, 
		\begin{itemize}
			\item[(i)] $(\gamma\cdot c_{\bf 0})$ is a Hermitian square in $\K$,
			%\item[(ii)] $\frac{1}{\delta^2}{\rm Discr}_{x_1}(\gamma\cdot g)$ is a square in $\F[x_2,\hdots,x_n]$.
			\item[(ii)] $\frac{1}{\delta^2}{\rm Discr}_{x_1}(\gamma\cdot g)|_{x_2 = \hdots = x_n =0} 
			= \gamma\cdot \left(\frac{1}{\delta^2}(c_{1{\bf 0}}^2 - 4c_{0{\bf 0}}c_{2{\bf 0}})\right) $ is a square in $\F$,  
			\item[(iii)] the sextic polynomials in ${\bf c}$ given by specializing 
			$B_{x_2}\left({\rm Discr}_{x_1}(\gamma\cdot g)\right)$, $C_{x_2}\left({\rm Discr}_{x_1}(\gamma\cdot g)\right)$
			and $D_{x_2}\left({\rm Discr}_{x_1}(\gamma\cdot g)\right)$ to $x_3 = \hdots = x_n =0$ are all zero. 
		\end{itemize}
	\end{thm}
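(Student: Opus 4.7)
The plan is to deduce the theorem from Theorem~\ref{thm:conj}, Lemma~\ref{lem:TwoVarConjFactor} and Corollary~\ref{cor:quarticSquare} by successively reducing the Hermitian-square condition on $g$ to a two-variable condition and then to univariate data.

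For the forward direction, suppose $g = h\overline{h}$ with $h \in \K[{\bf x}]$; since $g$ is multiquadratic, $h$ is multiaffine. For any $\gamma \in \SL_2(\F)^n \rtimes S_n$, the entries of $\gamma$ lie in $\F$, so $\overline{\gamma \cdot h} = \gamma \cdot \overline{h}$ and $\gamma \cdot g = (\gamma \cdot h)\overline{(\gamma \cdot h)}$ remains a Hermitian square. Evaluating at ${\bf x} = {\bf 0}$ yields (i). For (ii), the restriction $(\gamma \cdot g)|_{x_2 = \cdots = x_n = 0}$ equals $p\overline{p}$ for some $p = a x_1 + b \in \K[x_1]$, and a direct computation gives ${\rm Discr}_{x_1}(p\overline{p}) = (a\overline{b} - b\overline{a})^2$; since $a\overline{b} - b\overline{a}$ is negated by the involution, it equals $\delta r$ for some $r \in \F$, whence $\frac{1}{\delta^2}{\rm Discr}_{x_1}(p\overline{p}) = r^2$. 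For (iii), $g$ is already a product of the multiaffine polynomials $h$ and $\overline{h}$ over $\K$, so the sextic identities from Theorem~\ref{thm:prod}(ii) hold.

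For the backward direction, I would first apply Theorem~\ref{thm:conj} to reduce the Hermitian-square condition to showing that every restriction $(\gamma \cdot g)|_{x_3 = \cdots = x_n = 0} \in \F[x_1, x_2]$ is a Hermitian square in $\K[x_1, x_2]$, for all $\gamma \in \SL_2(\F)^n \rtimes S_n$. Fix such $\gamma$ and invoke Lemma~\ref{lem:TwoVarConjFactor} on this two-variable polynomial. Its constant-coefficient condition, after a further action by $\gamma' \in \SL_2(\{0,\pm 1\})^2 \rtimes S_2$, becomes the statement that $(\gamma'\gamma) \cdot c_{\bf 0}$ is a Hermitian square in $\K$, which is hypothesis (i) applied to the composite $\gamma'\gamma \in \SL_2(\F)^n \rtimes S_n$ (noting $\{0,\pm 1\} \subset \F$ since $\mathrm{char}(\F) \neq 2$). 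Its discriminant condition requires $\frac{1}{\delta^2}{\rm Discr}_{x_1}((\gamma'\gamma) \cdot g)|_{x_3 = \cdots = x_n = 0}$ to be a square in $\F[x_2]$; this is a univariate quartic in $x_2$ with coefficients in $\F$, to which I would apply Corollary~\ref{cor:quarticSquare}. The resulting conditions, after an $\SL_2(\{0,\pm 1\})$-action $\gamma''$ on $x_2$, become the $\F$-squareness of the evaluation at $x_2 = 0$ together with the vanishing of $B_{x_2}, C_{x_2}, D_{x_2}$, which are exactly hypotheses (ii) and (iii) applied to the triple composite $\gamma''\gamma'\gamma$.

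The main subtlety is the bookkeeping of how the group actions on disjoint coordinate blocks compose, and verifying that every auxiliary element from $\SL_2(\{0,\pm 1\})$ appearing inside Lemma~\ref{lem:TwoVarConjFactor} and Corollary~\ref{cor:quarticSquare} can be realized inside the ambient group $\SL_2(\F)^n \rtimes S_n$. This is automatic because $\{0,\pm 1\} \subset \F$ and the auxiliary actions commute with restriction of the remaining coordinates to zero. Moreover, the sextic identities in (iii) are polynomial relations in the coefficients $c_\alpha \in \F$ that do not depend on the ambient ring, so their vanishing transfers without obstruction between $\F$ and $\K$.
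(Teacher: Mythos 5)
Your proof is correct and follows essentially the same chain the paper uses: Theorem~\ref{thm:conj} to reduce to two-variable restrictions, then Lemma~\ref{lem:TwoVarConjFactor} and Corollary~\ref{cor:quarticSquare} to unpack the two-variable Hermitian-square condition into (i)--(iii). Your forward direction is spelled out more explicitly than the paper's (which runs the whole argument as a chain of equivalences), and the bookkeeping of how the auxiliary $\SL_2(\{0,\pm 1\})$-elements compose into $\SL_2(\F)^n\rtimes S_n$ is handled correctly.
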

	\begin{proof}
		Using Lemma~\ref{thm:conj}, $g$ is a Hermitian square in $\K[{\bf x}]$ if and only if for all $\gamma \in \SL_2(\F)^n \rtimes S_n$, $(\gamma \cdot g)|_{x_3=\hdots = x_n = 0}$ is a Hermitian square in $\K[x_1,x_2]$. Now Lemma~\ref{lem:TwoVarConjFactor}, shows that this  is equivalent to $\gamma \cdot c_{\bf 0}$ is a product of two conjugates and $\frac{1}{\delta^2}{\rm Discr}_{x_1}(\gamma\cdot g)$ is a square in $\F[x_2]$, which is equivalent to conditions (ii) and (iii) above using Corollary~\ref{cor:quarticSquare}.
	\end{proof}
	
	Now we are ready to give a complete characterization of the image of the principal minor map of Hermitian matrices using the 
	characterization of Hermitian multiaffine determinantal polynomials from Section~\ref{sec:otherDetRep} 
	and the characterization of multiquadratic polynomials that are Hermitian squares.
		
	Recall that to each element ${\bf a} = (a_S)_{S\subseteq [n]}$ in $\F^{2^n}$ we associate the multiaffine polynomial 
	\[f_{\bf a} = \sum_{S\subseteq [n]}a_S {\bf x}^{[n]\backslash S}.\] 	
	For $n=3$, the discriminant of the Rayleigh difference $\Delta_{12}(f)$ with respect to 
	$x_3$ is Cayley's $2\times 2\times 2$ hyperdeterminant
\begin{align*}
\HypDet({\bf a}) 
= &  \ ( a_{1} a_{23} + a_{2} a_{13}-a_{3} a_{12} - a_{\emptyset} a_{123})^2 -  4(a_1a_2 - a_{\emptyset} a_{12}) (a_{13}a_{23} - a_3a_{123}) \\ 
=& \ a_{\emptyset}^2 a_{123}^2 +  a_{1}^2 a_{23}^2 +a_{2}^2 a_{13}^2+a_{3}^2 a_{12}^2  - 2 a_{\emptyset} a_{1} a_{23} a_{123} - 2 a_{\emptyset} a_{2} a_{13} a_{123} -2 a_{\emptyset} a_{3} a_{12} a_{123}\\
 &-2 a_{1} a_{2} a_{13} a_{23} - 2 a_{1} a_{3} a_{12} a_{23} -2 a_{2} a_{3} a_{12} a_{13} + 4 a_{\emptyset} a_{23} a_{13} a_{12}+ 4 a_{123} a_{1} a_{2} a_{3}.
 \end{align*}
	This quartic polynomial therefore appears in the arithmetic conditions on the image of the principal minor map. 
	
	\begin{thm}\label{thm:HermImage} 
		Let ${\bf a} = (a_S)_{S\subseteq [n]}\in \F^{2^n}$ with $a_{\emptyset}=1$. There exists a Hermitian matrix over $\K$ with principal minors ${\bf a}$ if and only if for every $\gamma\in \SL_2(\F)^n \rtimes S_n$:
		\begin{itemize}
			\item[(i)]  $\gamma \cdot (a_1a_2-a_{\emptyset}a_{12})$ is a Hermitian square in $\K$,
			%\item[(ii)] $\frac{1}{\delta^2} {\rm Disc}_{x_3}(\gamma \cdot \Delta_{12}(f_{\bf a}))$ is a square.
			\item[(ii)] $\frac{1}{\delta^2}\HypDet({\bf \gamma \cdot a})$ is a square in $\F$, and 
			\item[(iii)] $\gamma\cdot {\bf a}$ satisfies the degree-12 polynomials given by specializing 
			$B_{x_4}\left({\rm Discr}_{x_3}(\gamma\cdot \Delta_{12}f_{\bf a})\right)$, $C_{x_4}\left({\rm Discr}_{x_3}(\gamma\cdot \Delta_{12}f_{\bf a})
			\right)$
			and $D_{x_4}\left({\rm Discr}_{x_3}(\gamma\cdot \Delta_{12}f_{\bf a})\right)$ to $x_5 = \hdots = x_n =0$. 
		\end{itemize}
	\end{thm}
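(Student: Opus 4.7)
The plan is to combine the characterization of multiaffine Hermitian determinantal polynomials (Corollary~\ref{cor:HerCompDetRep}) with the characterization of multiquadratic Hermitian squares (Theorem~\ref{thm:Conj}), translating all conditions into equations and inequalities in the coefficients ${\bf a}$.

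First, by Corollary~\ref{cor:HerCompDetRep} applied with $m=n$, the vector ${\bf a}$ lies in the image of $\mathrm{Her}_n(\K)$ under the principal minor map if and only if $\Delta_{ij}(f_{\bf a})$ is a Hermitian square in $\K[{\bf x}]$ for every pair $1 \leq i < j \leq n$. Using the $S_n$-factor of $\SL_2(\F)^n \rtimes S_n$, which permutes indices in ${\bf a}$, this is equivalent to requiring $\Delta_{12}(f_{\sigma\cdot{\bf a}})$ to be a Hermitian square in $\K[x_3,\ldots,x_n]$ for every $\sigma \in S_n$.

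Next, for each $\sigma$, I apply Theorem~\ref{thm:Conj} to the multiquadratic polynomial $\Delta_{12}(f_{\sigma \cdot {\bf a}}) \in \F[x_3,\ldots,x_n]$. This yields three families of conditions (Hermitian-squareness of the constant term, squareness of the scaled discriminant, and vanishing of the sextic polynomials), each parameterized by $\gamma' \in \SL_2(\F)^{n-2}\rtimes S_{n-2}$. By Corollary~\ref{cor:SL2onDelta}, the first two $\SL_2(\F)$-components of any $\gamma \in \SL_2(\F)^n$ act trivially on $\Delta_{12}(f_{\bf a})$, while the remaining components act on it as an element of $\SL_2(\F)^{n-2}$ on $(x_3,\ldots,x_n)$. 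Consequently, ranging $\gamma$ over $\SL_2(\F)^n \rtimes S_n$ recovers precisely the pairs $(\sigma,\gamma')$, up to the redundant $\SL_2(\F)^2$-action on $(x_1,x_2)$.

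It remains to identify the three conditions with the stated ones. A direct computation shows that the coefficient of $x_3^2 \cdots x_n^2$ in $\Delta_{12}(f_{\bf a})$ equals $a_1 a_2 - a_\emptyset a_{12}$; by the M\"obius-equivalence built into the $\SL_2(\F)^n$-action, $\gamma \cdot (a_1 a_2 - a_\emptyset a_{12})$ ranges (as $\gamma$ varies) over all scaled coefficients and evaluations of $\Delta_{12}(f_{\bf a})$ at $\F$-rational points, which is precisely the content of condition (i) of Theorem~\ref{thm:Conj}. The restriction of $\Delta_{12}(f_{\bf a})$ to $x_4=\cdots=x_n=0$ is a quadratic in $x_3$ whose discriminant is Cayley's $2\times 2\times 2$ hyperdeterminant $\HypDet({\bf a})$ as displayed just before the theorem statement; scaling by $\delta^2$ and taking $\gamma$-orbits gives condition (ii). Finally, applying the sextic forms $B_{x_4}, C_{x_4}, D_{x_4}$ from Corollary~\ref{cor:quarticSquare} to this discriminant (which is quadratic in ${\bf a}$) and specializing at $x_5 = \cdots = x_n = 0$ yields the degree-$12$ polynomials in ${\bf a}$ of condition (iii).

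The main obstacle I anticipate is the bookkeeping in the second and third paragraphs: one must carefully verify that the two-step parameterization (first over $\sigma \in S_n$, then over $\gamma' \in \SL_2(\F)^{n-2}\rtimes S_{n-2}$) gives exactly the same family of conditions as the single parameterization over $\gamma \in \SL_2(\F)^n \rtimes S_n$ in the statement. This requires invoking Corollary~\ref{cor:SL2onDelta} to dispose of the $\SL_2(\F)^2$-action on $(x_1,x_2)$, and checking that the leading-coefficient expression $a_1 a_2 - a_\emptyset a_{12}$ transforms under $\gamma$ so as to recover the full ``constant term of $\gamma'\cdot g$'' condition of Theorem~\ref{thm:Conj}.
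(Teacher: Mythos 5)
Your proposal follows essentially the same route as the paper: reduce to Hermitian-squareness of the Rayleigh differences $\Delta_{ij}(f_{\bf a})$ via the determinantal-representation theorem, then invoke the characterization of multiquadratic Hermitian squares to translate into the explicit conditions. One citation should be fixed: Corollary~\ref{cor:HerCompDetRep} is the specialization to $\F=\R$, $\K=\C$, whereas Theorem~\ref{thm:HermImage} is stated over a general field $\F$ with quadratic extension $\K$; you need Theorem~\ref{thm:HerDetRep} (with $m=n$), which is what the paper invokes. Beyond that, your discussion of the bookkeeping between the two-step parameterization (over $\sigma\in S_n$ and then $\gamma'\in\SL_2(\F)^{n-2}\rtimes S_{n-2}$) and the single $\gamma\in\SL_2(\F)^n\rtimes S_n$, mediated by Corollary~\ref{cor:SL2onDelta}, is more explicit than what the paper writes; the paper simply chains Theorem~\ref{thm:HerDetRep}, Theorem~\ref{thm:conj}, and Theorem~\ref{thm:Conj} and leaves this reconciliation implicit. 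Your identification of $a_1a_2 - a_\emptyset a_{12}$ with the leading (rather than constant) coefficient of $\Delta_{12}(f_{\bf a})$ is fine, since, as you note, the two are exchanged by the M\"obius inversion contained in the $\SL_2$-orbit, so condition (i) of Theorem~\ref{thm:Conj} (which refers to $c_{\bf 0}$) quantified over all $\gamma$ is equivalent to the leading-coefficient statement.
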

Here the operators $B_{x}$, $C_x$, $D_x$ are defined in Corollary~\ref{cor:quarticSquare}. 
	\begin{proof}[Proof of Theorem~\ref{thm:HermImage}]
			By Theorem~\ref{thm:HerDetRep} with $n=m$, ${\bf a} = (a_S)_{S\subseteq [n]}\in \F^{2^n}$ is in the image of the principal minor map if and only if $\Delta_{ij}(f_{{\bf a}})$ is a Hermitian square for all $i,j\in[n]$, which according to Theorem~\ref{thm:conj}, is satisfied if and only if for all $\gamma \in \SL_2(\F)^n\rtimes S_n$, $\gamma \cdot \Delta_{34}(f_{{\bf a}})|_{x_5 = \hdots = x_n =0}$ is a Hermitian square in $\K[x_1,x_2]$. This is equivalent to the three hypothesis of Theorem~\ref{thm:Conj}, which in turn is equivalent to the three hypotheses of the theorem.
	\end{proof}
		
		Taking $\K=\C$ with the action complex conjugation then gives the following. 
		
	\begin{cor}\label{cor:ImgHer}
		Let ${\bf a} = (a_S)_{S\subseteq [n]}\in \R^{2^n}$ with $a_{\emptyset}=1$. There exists a Hermitian matrix over $\C$ with principal minors ${\bf a}$ if and only if for every $\gamma\in \SL_2(\R)^n \rtimes S_n$
		\begin{itemize}
			\item[(ii)] $\gamma\cdot (a_1a_2-a_{\emptyset}a_{12})\geq 0$, 
			\item[(ii)] $\HypDet(\gamma \cdot{\bf a})\leq 0$, and 
			\item[(ii)] $\gamma\cdot {\bf a}$ satisfies the three degree-12 equations given by restricting 
			$B_{x_4}\left({\rm Discr}_{x_3}(\Delta_{12}f_{\gamma\cdot{\bf a}})\right)$, 
			$C_{x_4}\left({\rm Discr}_{x_3}(\Delta_{12}f_{\gamma\cdot{\bf a}})
			\right)$
			and $D_{x_4}\left({\rm Discr}_{x_3}(\Delta_{12}f_{\gamma\cdot{\bf a}})\right)$ to $x_5 = \hdots = x_n =0$.
		\end{itemize}
	\end{cor}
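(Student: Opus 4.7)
The plan is to deduce Corollary~\ref{cor:ImgHer} as an immediate specialization of Theorem~\ref{thm:HermImage} to the field extension $\R \subset \C$ with the involution given by complex conjugation. Once this setup is fixed, the three conditions in Theorem~\ref{thm:HermImage} need to be translated into the three conditions of the corollary using only elementary facts about complex numbers.

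First, I would identify the arithmetic data. With $\F = \R$ and $\K = \C$, the minimal polynomial of the extension is $t^2+1$, so $\delta = \ci$ and $\delta^2 = -1$. The subring of elements fixed by conjugation is $\R$, and a real number $r$ is a Hermitian square in $\C$, i.e., $r = z\overline{z}$ for some $z\in \C$, if and only if $r\geq 0$. Likewise, a real number is a square in $\R$ if and only if it is nonnegative.

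Next I would apply these observations termwise to the three conditions of Theorem~\ref{thm:HermImage}. Condition (i) there, that $\gamma\cdot(a_1a_2 - a_{\emptyset}a_{12})$ is a Hermitian square in $\C$, becomes condition (i) of the corollary: $\gamma\cdot(a_1a_2 - a_{\emptyset}a_{12}) \geq 0$. Condition (ii) there requires $\tfrac{1}{\delta^2}\HypDet(\gamma\cdot{\bf a}) = -\HypDet(\gamma\cdot{\bf a})$ to be a square in $\R$; since squares in $\R$ are exactly the nonnegative reals, this is equivalent to $\HypDet(\gamma\cdot{\bf a}) \leq 0$, which is condition (ii) of the corollary. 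Condition (iii) is purely polynomial in the entries of ${\bf a}$ and transfers verbatim.

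Finally I would observe that the hypothesis $|\F| \geq 13$ required in Theorem~\ref{thm:HermImage} (inherited from Theorem~\ref{thm:conj} and Theorem~\ref{thm:prod}) is automatic here since $\R$ is infinite, so there is no residual arithmetic obstruction. There is essentially no obstacle beyond these bookkeeping translations; the entire content of the corollary is already carried by Theorem~\ref{thm:HermImage}, and the proof amounts to recording the two equivalences ``Hermitian square in $\C$ $\Leftrightarrow$ nonnegative in $\R$'' and ``$-x$ is a square in $\R$ $\Leftrightarrow$ $x\leq 0$.''
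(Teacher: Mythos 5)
Your proposal is correct and follows essentially the same route as the paper, which states the corollary follows by taking $\K = \C$ with complex conjugation and leaves the bookkeeping implicit; you have simply spelled out the two equivalences. One small caveat: the paper defines $\delta$ as the square root of the discriminant of the minimal polynomial, which for $t^2+1$ gives $\delta^2 = -4$ rather than $-1$; this is immaterial to the argument since being a square in $\R$ is invariant under multiplication by a positive square, and the only thing used is that $\delta^2 < 0$.
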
	
		
%%%%%%%%%%%%%%%%%%%%%%%%%%%%%%%%%%%%%%%%%%%%%%%%
%%%%%%%%%%%%%%%%%%%%%%%%%%%%%%%%%%%%%%%%%%%%%%%%%

\section{A family of counterexamples}\label{sec:counterex}

Let $\F$ be a field and for $n\geq 2$, consider the multiaffine polynomial $f_{2n+1}\in \F[x_1, \hdots, x_{2n+1}]$ given by 
\begin{equation}\label{eq:f2n+1}
f_{2n+1} = x_1\cdot \prod_{j=1}^n (x_{2j+1}x_{2j+2} + 1) 
+ \prod_{j=1}^n (x_{2j}x_{2j+1} + 1)
\end{equation}
where we take $x_{2n+2} = x_2$. 
We show that this polynomial is not determinantal, i.e. 
its vector of coefficients do not belong to the image of the principal minor map, but is determinantal after specializing any one variable:
\begin{thm}\label{thm:GenCase}
There is \emph{no finite set of equations} 
whose orbit under ${\rm SL}_2(\F)^n\rtimes S_n$ 
set-theoretically cuts out the image of the principal minor map  for all $n$.
\end{thm}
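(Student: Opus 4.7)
The plan is to use the family $\{f_{2n+1}\}_{n\geq 2}$ from \eqref{eq:f2n+1} as counterexamples: these polynomials are not determinantal yet become determinantal (up to scaling) under any single-variable specialization, and this local-versus-global tension cannot be captured by a finite set of equations.

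First I would establish two structural properties of $f_{2n+1}$: (a) its coefficient vector lies outside the image of $\varphi$, and (b) for generic specialization of any one variable to a value in $\F\cup\{\infty\}$ (with $\infty$ interpreted as $\partial_{x_i}$), the resulting polynomial is, up to a nonzero scalar, in the image of $\varphi$. For (a), I apply Theorem~\ref{thm:DeltaDetRep}: every Rayleigh difference $\Delta_{ij}(f_{2n+1})$ factors as a product of two multiaffine polynomials, but as in Example~\ref{ex:CounterExFactoring}, the cyclic structure of \eqref{eq:f2n+1} forces the factorizations around the $2n$-cycle to violate the resultant compatibility condition~(b) of Theorem~\ref{thm:DeltaDetRep} at some triple of indices. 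For (b), the identity $f_{2n+1}|_{x_1=0}=\prod_{j=1}^n(x_{2j}x_{2j+1}+1)$ realizes the $x_1$-specialization as a product of multiaffine factors, hence giving a block-diagonal determinantal representation; the cyclic symmetry of \eqref{eq:f2n+1} handles the other variables, and the extension to arbitrary specialization points follows from an $\SL_2(\F)$-change of coordinate in the relevant variable together with Corollary~\ref{cor:Invariance}. I would then upgrade (b) to multi-variable specialization by iteration: if $g$ is in the image of $\varphi_\ell$, then for generic $t$ the specialization $g|_{x_i=t}$ is a nonzero scalar multiple of an image vector of $\varphi_{\ell-1}$ (by Schur complement in any determinantal representation of $g$), and $\partial_{x_i}g$ is the principal minor polynomial of the corresponding $(\ell-1)\times(\ell-1)$ principal submatrix. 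It follows that specializing $f_{2n+1}$ at generic values in any proper subset of its variables yields a nonzero scalar multiple of an image vector.

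To conclude, suppose for contradiction that $\{p_1,\ldots,p_k\}$ is a finite set of equations whose orbit under $\SL_2(\F)^n\rtimes S_n$ cuts out the image of $\varphi$ for every $n$. Replacing each $p_j$ by its homogeneous components, we may assume the $p_j$ are homogeneous (each component still vanishes on the image by the diagonal scaling action in $\SL_2(\F)^n$). Choose $m$ so that every $p_j$ may, after an element of $S_n$, be written as a polynomial in $a_S$ for $S\subseteq [m]$, and pick $N:=2n+1>m$; let ${\bf a}\in\F^{2^N}$ be the coefficient vector of $f_N$. For any $\gamma\in\SL_2(\F)^N\rtimes S_N$, $(\gamma\cdot p_j)({\bf a})=p_j(\gamma^{-1}\cdot{\bf a})$ depends only on the ``$[m]$-window'' $(\gamma^{-1}\cdot{\bf a})_S$ with $S\subseteq[m]$. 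A direct calculation via the multihomogenization $f_N^{\rm hom}$ shows this window is the coefficient vector of $h=\bigl(\prod_{j>m}c_j\bigr)\cdot\gamma'\cdot\tilde h$, where $\gamma'\in\SL_2(\F)^m$ is the restriction of $\gamma$ to the first $m$ coordinates and, writing $\gamma_j^{-1}=\bigl(\begin{smallmatrix}a_j&b_j\\c_j&d_j\end{smallmatrix}\bigr)$, the polynomial $\tilde h$ is $f_N$ specialized at $x_j=a_j/c_j$ for each $j>m$ with $c_j\neq 0$ (and with $\partial_{x_j}$ applied in place of the substitution when $c_j=0$). Since $N>m$, at least one variable is specialized, so by the extension of (b), $\tilde h$ is a scalar multiple of an image vector for $\varphi_m$, and so is $h$ by Corollary~\ref{cor:Invariance}. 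Homogeneity of $p_j$ then forces $p_j(h)=0$ for generic $\gamma$; since $(\gamma\cdot p_j)({\bf a})$ is a polynomial in the entries of $\gamma$ that vanishes generically, it vanishes identically. Thus ${\bf a}$ satisfies every orbit equation yet is not in the image of $\varphi$, giving the desired contradiction.

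The main technical obstacle is the Rayleigh-difference bookkeeping proving (a): around the underlying $2n$-cycle one must identify which multiaffine factorizations of $\Delta_{ij}(f_{2n+1})$ are forced at each vertex and extract an explicit failure of the resultant compatibility condition of Theorem~\ref{thm:DeltaDetRep}. The window and Schur-complement computations in the remainder of the argument are routine once the scaling bookkeeping is in place.
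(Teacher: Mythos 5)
Your overall strategy is exactly the paper's: use the family $f_{2n+1}$, show it is outside the image of $\varphi$ but satisfies every equation in the orbit of $I_{2n}$, and conclude that no finite set of equations can work for all $n$ by a window/specialization argument. The final ``window'' step, and the reduction to showing that specializing a single variable of $f_{2n+1}$ yields a determinantal polynomial, match the paper's proof of Theorem~\ref{thm:CounterEx} in spirit.

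However, your claimed justification for step (a) --- that $f_{2n+1}$ is not determinantal --- is incorrect in a way that matters. You assert that ``every Rayleigh difference $\Delta_{ij}(f_{2n+1})$ factors as a product of two multiaffine polynomials,'' with the obstruction lying in the resultant compatibility condition~(b) of Theorem~\ref{thm:DeltaDetRep} (as in Example~\ref{ex:CounterExFactoring}). This is false. The actual obstruction is more basic and occurs already at condition~(a): computing directly,
\[
\Delta_{12}(f_{2n+1}) = (x_3 - x_{2n+1})\prod_{i=3}^{2n}(x_i x_{i+1} + 1),
\]
and the $2n-1$ irreducible bivariate factors share variables in a cycle of odd length $2n-1$. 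A factorization into two multiaffine polynomials would require a proper $2$-coloring of this cycle, which does not exist. So $\Delta_{12}(f_{2n+1})$ does \emph{not} factor as a product of two multiaffine polynomials at all, and $f_{2n+1}$ already fails to lie in the Zariski-closed set $\mathcal{F}_{2n+1}$ of Lemma~\ref{lem:ZariskiClosure}. There is no resultant bookkeeping to do; the ``main technical obstacle'' you flag would have led you to try to verify a premise that does not hold. Separately, in step (b) the phrase ``block-diagonal determinantal representation'' for $f_{2n+1}|_{x_1=0}$ is fine, but the $\SL_2$-change-of-coordinate argument for arbitrary specialization points is glossed over; the paper instead writes down explicit $2n\times 2n$ determinantal representations of $f_{2n+1}/(1+x_1)$ and $f_{2n+1}/x_m$ with rational-function entries (Lemmas~\ref{lem:Det_x1} and~\ref{lem:Det_x2n+1}), which uniformly handle all generic specialization values and avoid the scaling bookkeeping you allude to.
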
 

Let $I_n \subset \F[a_S : S\subseteq [n]]$ be the homogeneous ideal of polynomials 
vanishing on the image of $n\times n$ matrices under the principal minor map in $\mathbb{P}^{2^n-1}(\F)$. 
There is a natural inclusion of $I_n$ into  $\F[a_S : S\subseteq [n+1]]$. 

\begin{thm}\label{thm:CounterEx}
The coefficient vector of the polynomial $f_{2n+1}$ belongs to the variety of polynomials in the orbit 
$({\rm SL}_2(\F)^{2n+1} \rtimes S_{2n+1})\cdot I_{2n}$ 
but not the variety of $I_{2n+1}$. 
\end{thm}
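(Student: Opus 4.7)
The two assertions require different approaches, and I would address them separately. For the first assertion (that $\mathbf{a}\notin V(I_{2n+1})$, equivalently that $f_{2n+1}$ is not determinantal), I plan to use Theorem~\ref{thm:DeltaDetRep} via a single well-chosen Rayleigh difference. Write $P=P'\cdot(x_{2n+1}x_2+1)$ with $P'=\prod_{j=1}^{n-1}(x_{2j+1}x_{2j+2}+1)$, and $Q=(x_2x_3+1)\cdot Q'$ with $Q'=\prod_{j=2}^n(x_{2j}x_{2j+1}+1)$. Since $\partial_{x_1}f_{2n+1}=P$ and $\partial_{x_1}^2 f_{2n+1}=0$, a short direct computation gives
\[
\Delta_{12}(f_{2n+1}) \;=\; P\cdot\partial_{x_2}f_{2n+1} - f_{2n+1}\cdot\partial_{x_2}P \;=\; P'\,Q'\,(x_3-x_{2n+1}).
\]
The $2n-1$ irreducible multiaffine factors of this product each involve exactly two variables, so they are naturally the edges of a graph on the vertex set $\{3,4,\ldots,2n+1\}$: the $n-1$ factors of $P'$ give edges $(2j+1,2j+2)$, the $n-1$ factors of $Q'$ give edges $(2j,2j+1)$, and $x_3-x_{2n+1}$ supplies the edge $(3,2n+1)$ that closes these into a single cycle of odd length $2n-1$. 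A factorization $\Delta_{12}(f_{2n+1})=g_{12}g_{21}$ with both factors multiaffine corresponds precisely to a proper $2$-edge-coloring of this cycle, which does not exist for odd cycles; Theorem~\ref{thm:DeltaDetRep} therefore implies $f_{2n+1}$ is not determinantal.

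For the second assertion, I would first unravel the group action. The ideal $I_{2n}\subset\F[a_S:S\subseteq[2n+1]]$ involves only the coordinates $a_S$ with $S\subseteq[2n]$, which under the identification ${\bf a}\leftrightarrow f_{2n+1}$ correspond to the coefficients of monomials of $f_{2n+1}$ containing $x_{2n+1}$. Using $S_{2n+1}$ to permute which index plays the role of $2n+1$, and the fact that the coefficient of $x_i$ in $\gamma\cdot f_{2n+1}$ for $\gamma\in\SL_2(\F)^{2n+1}$ is a M\"obius-transformed version of the linear combination $c_i f_0^{(i)}+a_i f_1^{(i)}$ (with $f_{2n+1}=f_0^{(i)}+x_i f_1^{(i)}$), the claim reduces to: for every $i\in[2n+1]$ and every $[\lambda_0:\lambda_1]\in\mathbb{P}^1(\F)$, the polynomial $\lambda_0 f_0^{(i)}+\lambda_1 f_1^{(i)}$ is, up to a global scalar, a principal minor polynomial of some $2n\times 2n$ matrix. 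For $i=1$ we have $f_0^{(1)}=Q$ and $f_1^{(1)}=P$, so the pencil is $\lambda_0 Q+\lambda_1 P$; my plan is to construct a $2n\times 2n$ matrix $M_\lambda$ with diagonal $\diag(x_2,\ldots,x_{2n+1})$ and nonzero off-diagonal entries exactly on the $2n$-cycle $2-3-4-\cdots-(2n+1)-2$ built from the union of $P$- and $Q$-pairs, with weights polynomial in $(\lambda_0,\lambda_1)$ chosen so that the principal minors of $M_\lambda$ match those of $\lambda_0 Q+\lambda_1 P$ up to the global scalar $\lambda_0+\lambda_1$.

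The key point is that this cycle has \emph{even} length $2n$, which is consistent with Theorem~\ref{thm:DeltaDetRep}: the factor graphs of the Rayleigh differences $\Delta_{jk}(\lambda_0 Q+\lambda_1 P)$ become bipartite and admit proper $2$-edge-colorings. For $i\geq 2$, the specialization destroys one edge of the odd $(2n-1)$-cycle appearing in Part 1, opening it into a path which is trivially $2$-edge-colorable; a similar explicit matrix construction handles each case, and the cyclic symmetry of $f_{2n+1}$ sending $x_k\mapsto x_{k+2}$ (indices taken modulo $2n$ within $\{x_2,\ldots,x_{2n+1}\}$, with $x_1$ fixed), which swaps $P$ and $Q$ after two applications, reduces the number of orbit representatives $i\geq 2$ that need to be verified.

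The main obstacle will be the explicit matrix construction for $i=1$: one must exhibit $M_\lambda$ whose principal minors match those of $(\lambda_0+\lambda_1)^{-1}(\lambda_0 Q+\lambda_1 P)$ uniformly in $(\lambda_0,\lambda_1)$. I expect to solve for the off-diagonal entries from a small polynomial system in $\lambda$ via Dodgson condensation (as in the proof of Theorem~\ref{thm:DeltaDetRep}), and the degenerate projective points where $\lambda_0+\lambda_1=0$, at which the coefficient of $\prod_i x_i$ vanishes, are handled either by continuity in the projective closure of the principal-minor image or by exhibiting a singular matrix realization. Combining Parts 1 and 2 then yields the theorem.
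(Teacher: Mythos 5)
Your overall strategy matches the paper's in both halves, and the computation in Part~1 is correct (and the recasting of it as a 2-edge-coloring of an odd cycle is a clean way to see the impossibility), but there are two genuine gaps.

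\textbf{Gap in Part~1.} You correctly show that $\Delta_{12}(f_{2n+1})$ has no factorization into two multiaffine polynomials, and hence by Theorem~\ref{thm:DeltaDetRep} that $f_{2n+1}$ is \emph{not in the image} of the principal minor map. But the theorem asks you to show the coefficient vector is not in the variety $V(I_{2n+1})$, which is the Zariski \emph{closure} of that image and could a priori be strictly larger. To bridge this, the paper proves Lemma~\ref{lem:ZariskiClosure}: the set $\mathcal{F}_{2n+1}$ of multiaffine $f$ with all $\Delta_{ij}(f)$ factoring as a product of two multiaffine polynomials (over $\F^{\rm alg}$) is Zariski-closed, hence contains $V(I_{2n+1})$; your odd-cycle argument then shows $f_{2n+1}\notin\mathcal{F}_{2n+1}$. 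Without some such closure argument, "not determinantal" does not yield "not in $V(I_{2n+1})$." Note also that the closure argument must be run over $\F^{\rm alg}$, since the 2-edge-coloring obstruction is combinatorial and so survives passage to the algebraic closure, which is needed for $\mathcal{F}_{2n+1}$ to be cut out by equations over $\F$.

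\textbf{Gap in Part~2.} Your reduction to one-parameter pencils $\lambda_0 f_0^{(i)} + \lambda_1 f_1^{(i)}$ is the right move and is essentially the paper's decomposition of $(\gamma,\pi)$ into $(\widehat{\gamma},\widehat{\pi})\in\SL_2(\F)^{2n}\rtimes S_{2n}$ composed with a single $(\gamma_{2n+1},\sigma)$, combined with invariance of the image under $\SL_2(\F)^{2n}\rtimes S_{2n}$ and the observation in \eqref{eq:P_no2n+1}. However, the heart of the matter — exhibiting, for each $i$ and generic $[\lambda_0:\lambda_1]$, an explicit $2n\times 2n$ matrix whose principal minors realize that specialization of $f_{2n+1}$ (up to the nonvanishing global scalar) — is left as a plan, not carried out. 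The paper supplies precisely this in Lemmas~\ref{lem:Det_x1} (for $i=1$) and~\ref{lem:Det_x2n+1} (for $i\geq 2$, using the dihedral symmetry of $f_{2n+1}$ to reduce to $i=2n+1$), with the key structural device that the entries involving $(1+x_1)^{-1}$ (resp.\ $x_{2n+1}^{-1}$) sit inside a rank-one block, so that the denominator appears to exactly the first power. Your sketch ("I expect to solve for the off-diagonal entries...") does not yet verify that the system is solvable uniformly in $\lambda$, nor does it address the degenerate locus $\lambda_0+\lambda_1=0$ beyond an appeal to continuity that would itself need justification over an arbitrary field $\F$. In the paper this degeneracy is sidestepped entirely: one only needs the identity \eqref{eq:P_no2n+1} to hold for \emph{generic} $\gamma$, since $P$ is a polynomial and the principal-minor image is $\SL_2\rtimes S_{2n}$-invariant.

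In short: same approach as the paper, correct key computation, but two missing pieces — the Zariski-closure lemma in Part~1, and the explicit determinantal representations in Part~2.
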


The proof of this theorem relies on the fact that the coefficient of any generic specialization of $f_{2n+1}$ lies in the image 
of the principal minor map, up to scaling.  
One key observation is that the Rayleigh differences of $f_{2n+1}$ 
do not all factor as the product of \emph{two} multiaffine polynomials, 
but do have such factorizations after specializing anyone variable.  We show this explicitly by writing down the determinantal representations of these specializations. 

\begin{lem}\label{lem:Det_x1}
The rational function $\frac{1}{x_{2n+1}}f_{2n+1}$ can be written as 
$\det({\rm diag}(x_1, \hdots, x_{2n}) +B)$ where for $1\leq i,j\leq 2n$,

The rational function $\frac{1}{1+x_1}f_{2n+1}$ can be written as 
$\det({\rm diag}(x_2, \hdots, x_{2n+1}) +A)$ where for $2\leq i,j\leq 2n+1$, 
\[
A_{ij} = \begin{cases} 
1/(1+x_1) &  \text{ if $i$ is odd, $j$ is even, and  $i>j$} ,\\
-x_1/(1+x_1) &  \text{ if $i$ is odd, $j$ is even, and  $i<j$}, \\
-1  &  \text{ if $i$ is even, $j = i+1$}, \\
1  &  \text{ if $i$ is even, $j = i-1$}, \\
-x_1 &  \text{ if $i=2$, $j = 2n+1$, and } \\
0 & \text{ otherwise}.
\end{cases}
\]
\end{lem}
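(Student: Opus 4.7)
My plan is to verify the determinant identity directly by computing the Leibniz expansion. First, I would multiply each of the $n$ odd-indexed rows of $M := \diag(x_2, \ldots, x_{2n+1}) + A$ by $(1+x_1)$, producing a polynomial matrix $M''$ with $\det(M'') = (1+x_1)^n \det(M)$. The lemma is then equivalent to the polynomial identity $\det(M'') = (1+x_1)^{n-1} f_{2n+1}$ in $\F[x_1, \ldots, x_{2n+1}]$, which is more convenient because the rest of the argument takes place in a polynomial ring and both sides can be directly compared as polynomials, multiaffine in $x_2, \ldots, x_{2n+1}$ and of degree at most $n$ in $x_1$.

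Next, I would classify the permutations $\sigma$ of $\{2,\ldots,2n+1\}$ that contribute nonzero terms to the Leibniz expansion of $\det(M'')$. The sparsity of $M''$ forces the non-fixed part of $\sigma$ to be determined by (a) a matching $N$ in the $2n$-cycle $C_{2n}$ on $\{2,\ldots,2n+1\}$, arising from the sparse even-row entries (each even row $i\geq 4$ has support $\{i-1,i,i+1\}$ and row $2$ has support $\{2,3,2n+1\}$), together with (b) an arbitrary bijection $\pi\colon O_N\to E_N$ from the odd endpoints to the even endpoints of $N$, coming from the dense odd rows. Summing over $\pi$ for fixed $N$ collapses, up to a sign from the cycle structure of $\sigma$, to the $|N|\times |N|$ determinant of the submatrix $U[O_N,E_N]$ of the staircase matrix $U$ defined by $U_{ij}=1$ for $i>j$ and $U_{ij}=-x_1$ for $i<j$ (with $i$ odd and $j$ even).

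Combining the sign $(-1)^{|N|}$, the even-row factors $\epsilon_{(o,e)}\in\{+1,-1,-x_1\}$ (with $-x_1$ arising only from the corner entry $(2,2n+1)$), and the fixed-point diagonals (contributing $x_i$ for $i\in F_N$ and an extra $(1+x_1)$ for each odd $i\in F_N$), the expansion becomes
\[
\det(M'') \;=\; \sum_{N}(-1)^{n_1(N)}\, x_1^{w(N)}\, \det(U[O_N,E_N])\cdot (1+x_1)^{n-|N|}\cdot \!\!\prod_{i\notin V(N)}\! x_i,
\]
where $n_1(N)$ counts edges of $N$ in the non-wrap part of $E_1 = \{(3,4),(5,6),\ldots,(2n+1,2)\}$ and $w(N)\in\{0,1\}$ indicates use of the wrap-around edge. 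An LU-type factorization of $U$ (right-multiplying by an upper bidiagonal matrix with $\pm 1$ entries produces a matrix whose only nonzero entries are $1$s in the first column and $-(1+x_1)$s on a shifted diagonal) gives $\det(U)=(1+x_1)^{n-1}$ and, more generally, shows $\det(U[O_N,E_N])$ depends only on the interleaving pattern of $O_N$ and $E_N$, vanishing unless the sorted union of $O_N\cup E_N$ strictly alternates between odd and even indices. Grouping the surviving contributions, matchings $N\subseteq E_2=\{(2,3),(4,5),\ldots,(2n,2n+1)\}$ combine to $(1+x_1)^{n-1}$ times the expansion of $\prod_j(x_{2j}x_{2j+1}+1)$, while matchings $N\subseteq E_1$ yield $(1+x_1)^{n-1}\cdot x_1\prod_j(x_{2j+1}x_{2j+2}+1)$, recovering $(1+x_1)^{n-1}f_{2n+1}$.

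The main obstacle will be Step three: cleanly evaluating $\det(U[O_N,E_N])$ for each matching $N$ and tracking the interaction of the sign $(-1)^{n_1(N)}$, the power $x_1^{w(N)}$, and the factor $(1+x_1)^{n-|N|}$. In particular, one must verify that mixed matchings (with edges drawn from both $E_1$ and $E_2$) contribute zero via the interleaving criterion, and that the wrap-around edge $(2,2n+1)$ provides precisely the extra factor of $x_1$ needed to match the $f_A$ term in $f_{2n+1}$ - the cyclic closure that also prevents $f_{2n+1}$ itself from being determinantal.
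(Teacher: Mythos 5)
Your plan takes a genuinely different route from the paper's, replacing a short structural argument with a full Leibniz expansion. The paper never expands the determinant. Instead it observes that the entries of $M$ containing $1/(1+x_1)$ fill a square (odd rows)$\times$(even columns) submatrix of the form $\tfrac{1}{1+x_1}J - U$ with $J$ all ones and $U$ strictly upper triangular. Because $J$ has rank one, every minor of this block has a pole of order at most one at $x_1=-1$ and no $x_1$ in the numerator; the only remaining $x_1$ is the $(2,2n+1)$ entry, and the only Laplace term that avoids the block entirely is the diagonal product. Hence $\det(M)=p/(1+x_1)$ with $p$ multiaffine in \emph{all} variables. Identifying $p$ with $f_{2n+1}$ then reduces to two easy block-triangular computations: the specialization $x_1=0$ gives $\prod_j(x_{2j}x_{2j+1}+1)$, and inverting $x_1$, rescaling row and column $2$, cyclically shifting, and specializing to $0$ gives the coefficient of $x_1$, namely $\prod_j(x_{2j+1}x_{2j+2}+1)$. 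This exploits the same rank-one phenomenon you build your staircase matrix $U=(1+x_1)L-x_1J$ around, but uses it once, globally, to control pole order rather than entry-by-entry in a Leibniz sum.

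Your proposal, while plausible, leaves substantial steps unverified, as you yourself note. The classification of contributing permutations as (matching $N$ in the $2n$-cycle) $+$ (free bijection $\pi\colon O_N\to E_N$) is correct, and the alternation criterion for $\det(U[O_N,E_N])$ to vanish is correct (non-alternation forces $L[O_N,E_N]$, hence $U[O_N,E_N]$, to have two equal rows or columns). But the sign $(-1)^{n_1(N)}x_1^{w(N)}$ must be reconciled with the cycle structure of $\sigma$ restricted to $O_N\cup E_N$ (the parity of the number of alternating cycles, which depends on $\pi$), and the final collapse of the surviving matchings into exactly the two products $(1+x_1)^{n-1}\prod(x_{2j}x_{2j+1}+1)$ and $(1+x_1)^{n-1}x_1\prod(x_{2j+1}x_{2j+2}+1)$ needs a careful accounting of the factors $(1+x_1)^{n-|N|}$ against the residual powers left by $\det(U[O_N,E_N])$. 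Spot checks for small $n$ come out right, so the approach can likely be pushed through, but as written it is a sketch with its hardest step flagged rather than completed. The paper's route, which converts the whole problem into two block-triangular determinant evaluations, is both shorter and complete, and I'd recommend adopting it.
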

\begin{proof}
Let $D$ denote the determinant of the matrix $M = \det({\rm diag}(x_2, \hdots, x_{2n+1}) +A)$. 
By definition, $D$ is a polynomial in $\frac{1}{1+x_1}, x_1, x_2, \hdots, x_n$. 
Moreover the entries for which $x_1+1$ appears in the denominator 
form a square submatrix whose rows correspond to odd indices and whose columns correspond to even ones. 
It has the form 
\[
\frac{1}{1+x_1}
\begin{pmatrix}
1 & -x_1 	& -x_1 & \hdots & -x_1 \\
1 &  1 	& -x_1 & \hdots & -x_1 \\
\vdots& \ddots   &\ddots  & \ddots   &\vdots   \\
1 & 1 & \ddots  & \ddots & -x_1 \\
1 & 1 & 1 & \hdots &1 
\end{pmatrix}
= 
\frac{1}{1+x_1}J - U
\]
where $J$ is the all ones matrix and $U$ is an upper triangular matrix with $U_{ij} = 1$ for $i<j$ and $U_{ij}=0$ otherwise. Since $J$ has rank one, the exponent of $1+x_1$ appearing in the denominator 
of \emph{any} minor of this matrix is at most one. 
This also shows that despite the many appearances of $x_1$ in numerator of this matrix, 
it does not appear in the numerator of any minor.  There is only one other entry in $M$ containing $x_1$, 
and so the determinant $D$ can be written as $(x_1+1) ^{-1} p_1 + p_2$ where $p_1$ and $p_2$ are multiaffine in $x_1, \hdots, x_{2n+1}$. 
Moreover, the only term in the Laplace expansion of the determinant of $M$ avoiding this submatrix is the 
product of the diagonal $\prod_{j=2}^{2n+1}x_j$. Therefore we can write $D$ as $(x_1+1) ^{-1} p$
where $p$ is multiaffine in $x_1, \hdots, x_{2n+1}$. 
Therefore to show that $p = f_{2n+1}$ it suffices to show that they have the same specialization at $x_1=0$ 
and the same coefficient of $x_1$.

When we specialize $x_1$ to zero, $M$ becomes a block upper-triangular l matrix with diagonal blocks of the form $\begin{pmatrix} x_{2j} & -1 \\ 1 & x_{2j+1}\end{pmatrix}$. 
Its determinant agrees with the specialization of $\frac{1}{1+x_1}f_{2n+1}$ to $x_1 = 0$.

Consider the rational function $g$ obtained by inverting $x_1$ in $\frac{1}{1+x_1}f_{2n+1}$, which is
\[\frac{x_1}{1+x_1}f_{2n+1}(x_1^{-1}, x_2, \hdots, x_n)
= \frac{1}{1+x_1} \cdot  
\left(
\prod_{j=1}^n (x_{2j+1}x_{2j+2} + 1) 
+ x_1\cdot  \prod_{j=1}^n (x_{2j}x_{2j+1} + 1)
\right).
\]
Let $M'$ be the matrix obtained from $M$ by replacing $x_1$ by $x_1^{-1}$ and then multiplying the 
column indexed by $2$ by $x_1^{-1}$ and the row indexed by $2$ by $x_1$.
The entries are now rational functions in $x_1$ with only $1+x_1$ appearing in the denominator. 
 After specializing $M'$ to $x_1=0$ and cyclic shifting the rows and columns by one, 
 we find another block upper triangular matrix with diagonal blocks of the form 
 $\begin{pmatrix} x_{2j+1} & -1 \\ 1 & x_{2j+2}\end{pmatrix}$ for $j=1, \hdots, n-1$ and 
 $\begin{pmatrix} x_{2n+1} & 1 \\ -1 & x_{2}\end{pmatrix}$.
  Therefore the determinant of $M'$ restricted to $x_1 = 0$ is given by $\prod_{j=1}^n (x_{2j+1}x_{2j+2} + 1)$.

By definition, the determinant of $M'$ equals 
$D(x_1^{-1}, x_2,\hdots, x_n) =  \frac{x_1}{1+x_1}p(x_1^{-1}, \hdots, x_n) $. 
Restricting to $x_1=0$ gives the coefficient of $x_1$ in $p$, which must be 
$\prod_{j=1}^n (x_{2j+1}x_{2j+2} + 1)$.   
Therefore $p$ agrees with the polynomial $f_{2n+1}$. 
\end{proof}

\begin{figure}
\[
{\small \left(
\begin{array}{cccccc}
 x_2 & -1 & 0 & 0 & 0 & -x_1 \\
 \frac{1}{x_1+1} & x_3 & -\frac{x_1}{x_1+1} & 0 & -\frac{x_1}{x_1+1} & 0 \\
 0 & 1 & x_4 & -1 & 0 & 0 \\
 \frac{1}{x_1+1} & 0 & \frac{1}{x_1+1} & x_5 & -\frac{x_1}{x_1+1} & 0 \\
 0 & 0 & 0 & 1 & x_6 & -1 \\
 \frac{1}{x_1+1} & 0 & \frac{1}{x_1+1} & 0 & \frac{1}{x_1+1} & x_7 \\
\end{array}
\right)
\sim 
\left(
\begin{array}{cccccc}
 x_2 & 0 & 0 & -1 & 0 & -x_1 \\
 0 & x_4 & 0 & 1 & -1 & 0 \\
 0 & 0 & x_6 & 0 & 1 & -1 \\
 \frac{1}{x_1+1} & -\frac{x_1}{x_1+1} & -\frac{x_1}{x_1+1} & x_3 & 0 & 0 \\
 \frac{1}{x_1+1} & \frac{1}{x_1+1} & -\frac{x_1}{x_1+1} & 0 & x_5 & 0 \\
 \frac{1}{x_1+1} & \frac{1}{x_1+1} & \frac{1}{x_1+1} & 0 & 0 & x_7 \\
\end{array}
\right)}
\]
\caption{The matrix $A$ in Lemma~\ref{lem:Det_x1} for $2n+1 = 7$.}
\end{figure}

\begin{lem}\label{lem:Det_x2n+1}
For every $m=2, \hdots, 2n+1$, the coefficients of 
$\frac{1}{x_{m}}f_{2n+1}$ are the principal minors of a $2n\times 2n$ matrix 
with entries in $\{0,\pm 1, x_m^{\pm1}\}$. 
In particular, the rational function $\frac{1}{x_{2n+1}}f_{2n+1}$ can be written as 
$\det({\rm diag}(x_1, \hdots, x_{2n}) +B)$ where for $1\leq i,j\leq 2n$, 
\[
B_{ij} = \begin{cases} 
1 &  \text{ if $j=i+1$ and $i>1$ or $(i,j) =(1,1)$ or $(i,j) =  (2,1)$} ,\\
-1  &  \text{ if $i$ is even and $j = i-1$ or $(i,j) = (2n,1)$}, \\
x_{2n+1} &  \text{ if $i$ odd, $i\geq 3$, and $j = 1$, } \\
1/x_{2n+1}&  \text{ if $i\in \{1,2\}$ and $j$ is even, and} \\
0 & \text{ otherwise}.
\end{cases}
\]
\end{lem}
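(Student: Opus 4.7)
The plan is to follow the template of Lemma~\ref{lem:Det_x1}. We focus on the case $m = 2n+1$ with the explicit matrix $B$ displayed; the remaining cases $m \in \{2, \ldots, 2n\}$ then follow from analogous constructions that invoke the dihedral symmetries of the cycle on vertices $\{2, 3, \ldots, 2n+1\}$ underlying the two perfect-matching products in $f_{2n+1}$.

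Set $M = \diag(x_1,\ldots,x_{2n}) + B$. The entries of $B$ equal to $1/x_{2n+1}$ form a $2 \times n$ sub-block at rows $\{1,2\}$ and even columns $\{2, 4, \ldots, 2n\}$, and, crucially, every entry of this sub-block equals $1/x_{2n+1}$; hence the sub-block has rank one. The entries of $B$ equal to $x_{2n+1}$ all lie in column $1$, at odd rows $\geq 3$. By the rank-one cancellation argument from Lemma~\ref{lem:Det_x1} (paired contributions from two entries of the $1/x_{2n+1}$ block cancel after swapping their column assignment), every minor of $B$ has at most one factor of $1/x_{2n+1}$, and since the $x_{2n+1}$ entries all lie in one column, at most one factor of $x_{2n+1}$. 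Consequently, $x_{2n+1} \det M \in \F[x_1, \ldots, x_{2n+1}]$ is multiaffine in $x_1, \ldots, x_{2n}$ and of degree at most $2$ in $x_{2n+1}$.

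Write $x_{2n+1} \det M = a_{-1} + a_0\, x_{2n+1} + a_1\, x_{2n+1}^2$ with $a_i \in \F[x_1, \ldots, x_{2n}]$ multiaffine. Since $f_{2n+1}$ has degree $1$ in $x_{2n+1}$ (each variable $x_i$ with $i \geq 2$ appears exactly once in each matching product, and in only one summand of $f_{2n+1}$), the identity $x_{2n+1}\det M = f_{2n+1}$ reduces to three checks: $a_1 = 0$, $a_{-1} = f_{2n+1}|_{x_{2n+1}=0}$, and $a_0 = \partial f_{2n+1}/\partial x_{2n+1}|_{x_{2n+1}=0}$. Here $a_{-1}$ is the contribution of minors of $B$ that use exactly one $1/x_{2n+1}$ entry, $a_0$ the contribution of those using no $x_{2n+1}^{\pm 1}$ entry, and $a_1$ the contribution of minors using an $x_{2n+1}$ entry and no $1/x_{2n+1}$ entry. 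We expect $a_1 = 0$ to follow from a pairing argument on permutations routed through the distinguished $x_{2n+1}$ entry, while the matches for $a_{-1}$ and $a_0$ should reassemble, via the bi-diagonal pattern of rows $3, \ldots, 2n$ of $B$, into the two matching products defining $f_{2n+1}$.

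The main obstacle is executing this coefficient matching cleanly. A convenient streamlining is the row operation replacing row $2$ by row $2$ minus row $1$: the new row $2$ becomes $(-x_1, x_2, 1, 0, \ldots, 0)$, free of any $1/x_{2n+1}$ entry, so column $2$ of the transformed matrix then contains only two nonzero entries, $1/x_{2n+1}$ and $x_2$. Laplace expansion along this column decomposes $\det M$ into two $(2n-1) \times (2n-1)$ minors, and the resulting identity should be amenable to induction on $n$, with the base case $n = 2$ --- that is, the polynomial $f_5$ of \eqref{eq:f5} --- verified by direct computation.
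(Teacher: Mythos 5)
Your overall shape — establish that $x_{2n+1}\det M$ is a polynomial, multiaffine in $x_1,\ldots,x_{2n}$ and of bounded $x_{2n+1}$-degree, then match a few coefficients — is correct, and the rank-one observation about the $1/x_{2n+1}$ block is exactly the paper's. But you have chosen the wrong variable to split on, and this leads you into the difficulty you then cannot resolve. The variable $x_{2n+1}$ appears in \emph{both} matching products of $f_{2n+1}$ (in $x_{2n+1}x_2+1$ and in $x_{2n}x_{2n+1}+1$), so $f_{2n+1}|_{x_{2n+1}=0}$ and $\partial f_{2n+1}/\partial x_{2n+1}$ are each a sum of two products and your "should reassemble" step is genuinely entangled. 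The paper — and indeed your own cited template, Lemma~\ref{lem:Det_x1} — splits on $x_1$, which is the variable separating the two summands of $f_{2n+1}$: then $f_{2n+1}|_{x_1=0} = \prod_j(x_{2j}x_{2j+1}+1)$ and $\partial f_{2n+1}/\partial x_1 = \prod_j(x_{2j+1}x_{2j+2}+1)$ are each a single product. The $(1,1)$-minor of $M$ is directly block diagonal, and the $x_1=0$ specialization, after a Schur complement on the $(1,1)$ entry (which becomes $1$), is block lower triangular; both checks reduce to reading off $2\times2$ blocks of the form $\begin{pmatrix} x_a & 1 \\ -1 & x_b\end{pmatrix}$. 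None of these block structures is visible once you split on $x_{2n+1}$.

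Two further concrete errors. The claim that $a_1=0$ "follows from a pairing argument" misdescribes the mechanism: no term of positive $x_{2n+1}$-degree exists at all, because a permutation $\sigma$ that picks up an $x_{2n+1}$ entry (column $1$, some odd row $\geq 3$) cannot send $1\mapsto 1$, and the only other nonzero entries in row $1$ lie in the even columns of the $1/x_{2n+1}$ block; so $\sigma$ is forced to collect a compensating $1/x_{2n+1}$ and the net degree is $0$, not $+1$. Related to this, your description of $a_0$ as "the contribution of those using no $x_{2n+1}^{\pm1}$ entry" is wrong: it omits precisely those terms in which one $x_{2n+1}$ and one $1/x_{2n+1}$ cancel, and there are many. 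Finally, the row-operation/Laplace/induction route is left entirely undeveloped; the inductive step would have to relate $(2n-1)\times(2n-1)$ minors of $M$ to a determinantal representation of $f_{2n-1}$, but $f_{2n+1}$ does not specialize to $f_{2n-1}$ by setting variables to constants, so this is not obviously available and would need a separate argument.
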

\begin{proof}
Let $M = \det({\rm diag}(x_1, \hdots, x_{2n}) +B)$ and let $D$ denote its determinant. 
As in the proof of Lemma~\ref{lem:Det_x1}, the entries of $M$ with $x_{2n+1}$ appearing in the denominator 
appear in a submatrix of rank-one. The entries with $x_{2n+1}$ appearing in the numerator 
are contained in the first column.  Moreover, in the Laplace expansion of the determinant, 
the only terms avoiding the submatrix of entries $x_{2n+1}^{-1}$ must include the $(1,1)$ and $(2,2)$ entries, and 
so will not involve any entries with $x_{2n+1}$. 
It follows that $D$ can be written as $x_{2n+1}^{-1}p$ where $p$ is multiaffine in $x_1, \hdots, x_{2n+1}$. 
Therefore it suffices to check that $f_{2n+1}$ and $p$ have the same restriction to $x_{1} = 0$ 
and same coefficient of $x_{1}$.

We see that the coefficient of $x_1$ in $D$ is the determinant of the matrix $M$ after removing the 
first row and column. This minor is a block matrix with one block of the form $(x_2 + 1/x_{2n+1})$ 
and the rest of the form $\begin{pmatrix}x_{2j+1} & 1 \\ -1 & x_{j+2}\end{pmatrix}$. 
Therefore the coefficient of $x_1$ in $p$ and $f_{2n+1}$ agree.

The specialization of $M$  to $x_1 =0$ is a matrix has the form $\begin{pmatrix} 1 & b^T \\ 
c & A \end{pmatrix}$. Using Schur complements, we see that the determinant 
equals the determinant of $A - cb^T$.  One can check that the matrix $A - cb^T$ is a block-lower triangular matrix
with diagonal blocks  $\begin{pmatrix} x_{2j} & 1 \\ -1 & x_{2j+1}\end{pmatrix}$ for $j=1, \hdots, n-1$ 
and $x_{2n} + 1/x_{2n+1}$. This shows that the restriction of $p$ to $x_1=0$ agrees with that of $f_{2n+1}$. 

For the corresponding statement with arbitrary $m\neq 1$, we use the symmetries of $f_{2n+1}$ under the 
action of a dihedral group of order $n$ with the cyclic action  $j\mapsto j+2$  (identifying $2n+j = j$ for $j\geq 2$ 
and reflection $n+1-j \leftrightarrow n+2-j$. 
There is some element of this group that moves $m$ to $2n+1$, and we can take the 
image of the representation above. 
\end{proof}

\begin{figure}
\[
{\small \left(
\begin{array}{cccccc}
 x_1+1 & \frac{1}{x_7} & 0 & \frac{1}{x_7} & 0 & \frac{1}{x_7} \\
 1 & x_2+\frac{1}{x_7} & 1 & \frac{1}{x_7} & 0 & \frac{1}{x_7} \\
 x_7 & 0 & x_3 & 1 & 0 & 1 \\
 0 & 0 & -1 & x_4 & 1 & 0 \\
 x_7 & 0 & 0 & 0 & x_5 & 1 \\
 -1 & 0 & 0 & 0 & -1 & x_6 \\
\end{array}
\right)} \ \ \ \ \ 
{\small \left(
\begin{array}{ccccc}
 x_2 & 1 & 0 & 0 & 0 \\
 -1 & x_3 & 0 & 0 & 0 \\
 0 & -1 & x_4 & 1 & 0 \\
 -1 & 0 & -1 & x_5 & 0 \\
 \frac{1}{x_7} & 0 & \frac{1}{x_7} & -1 & x_6+\frac{1}{x_7} \\
\end{array}
\right)}
\]
\caption{The matrices $B$ (left) and $A - cb^T$ (right) in Lemma~\ref{lem:Det_x2n+1} for $2n+1 = 7$.}
\end{figure}

To show that $f_{2n+1}$ does not belong to $I_{2n+1}$, we will use the following: 

\begin{lem}\label{lem:ZariskiClosure}
The set of polynomials 
\[\mathcal{F}_n = \left\{f\in \F[{\bf x}]_{\rm MA} : \text{ for all } i,j\in [n], \Delta_{ij}(f) = 
g_{ij}\cdot h_{ij} \text{ for some } g_{ij},h_{ij}\in \F^{\rm alg}[{\bf x}]_{\rm MA}\right\}\]
is Zariski closed in $\F[{\bf x}]_{\rm MA} \cong \F^{2^{[n]}}$, where $\F^{\rm alg}$ denotes the algebraic closure of $\F$. 
\end{lem}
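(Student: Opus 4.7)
The plan is to realize $\mathcal{F}_n$ as a finite intersection of preimages of Zariski closed sets under polynomial maps. For each pair $i\neq j\in[n]$, the Rayleigh difference defines a polynomial (in fact quadratic) map $\Delta_{ij}: \F[{\bf x}]_{\rm MA}\to \F[{\bf x}]_{\leq {\bf d}(i,j)}$ in the coefficients, where ${\bf d}(i,j)$ is the multidegree vector with a $2$ in every coordinate except the $i$th and $j$th, where it is $0$. Let $P_{ij}\subseteq \F^{\rm alg}[{\bf x}]_{\leq {\bf d}(i,j)}$ denote the set of polynomials that factor as a product $g_{ij}\cdot h_{ij}$ of two multiaffine polynomials over $\F^{\rm alg}$. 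If each $P_{ij}$ is Zariski closed in $\F^{\rm alg}[{\bf x}]_{\leq {\bf d}(i,j)}$, then $\mathcal{F}_n\otimes_\F \F^{\rm alg} = \bigcap_{i\neq j}\Delta_{ij}^{-1}(P_{ij})$ is Zariski closed, as a finite intersection of preimages of closed sets under morphisms.

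To show each $P_{ij}$ is closed, I will invoke the main theorem of elimination theory. Let $V\subseteq \F^{\rm alg}[{\bf x}]$ be the finite-dimensional subspace of multiaffine polynomials in the variables $\{x_k:k\neq i,j\}$ and let $W\subseteq \F^{\rm alg}[{\bf x}]_{\leq {\bf d}(i,j)}$ be the corresponding space of multiquadratic ones. Because $\F^{\rm alg}[{\bf x}]$ is an integral domain, multiplication sends pairs of nonzero elements to nonzero elements, so there is a well-defined morphism of projective varieties
\[
\mu:\mathbb{P}(V)\times \mathbb{P}(V)\longrightarrow \mathbb{P}(W), \qquad ([g],[h])\longmapsto [g\cdot h].
\]
Since $\mathbb{P}(V)\times \mathbb{P}(V)$ is projective, the image $Z=\mu(\mathbb{P}(V)\times \mathbb{P}(V))$ is Zariski closed in $\mathbb{P}(W)$. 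The affine cone over $Z$ is exactly $\{g\cdot h : g,h\in V\}\cup\{0\}=P_{ij}$ (using that $\lambda gh=(\lambda g)h$), and the affine cone of a projective closed set is closed in affine space. Thus $P_{ij}$ is Zariski closed.

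Finally, to descend from $\F^{\rm alg}$ to $\F$, observe that the defining property of $\mathcal{F}_n$ is $\mathrm{Gal}(\F^{\rm alg}/\F)$-invariant: if $f\in\F[{\bf x}]_{\rm MA}$ and $\Delta_{ij}(f)=g_{ij}h_{ij}$ over $\F^{\rm alg}$, then for any $\sigma\in \mathrm{Gal}(\F^{\rm alg}/\F)$ we have $\Delta_{ij}(f)=\sigma(g_{ij})\,\sigma(h_{ij})$, another valid factorization. Hence the $\F^{\rm alg}$-closed set from the previous step is Galois-stable and therefore cut out by polynomials with coefficients in $\F$; intersecting with $\F[{\bf x}]_{\rm MA}$ yields $\mathcal{F}_n$ as a Zariski closed subset. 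The only nontrivial ingredient is the projective closedness of the image of $\mu$, which is the standard fact that projective morphisms are closed; everything else is formal.
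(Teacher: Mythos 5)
Your proof is correct and follows essentially the same strategy as the paper's: realize the set of products of two multiaffine polynomials as the image of a bilinear multiplication map, invoke completeness of projective space (projective elimination) to conclude it is Zariski closed, pull back along the quadratic map $\Delta_{ij}$, intersect over all pairs $\{i,j\}$, and finally descend from $\F^{\rm alg}$ to $\F$. The paper compresses the middle step to a one-line appeal to "the projective elimination theorem," whereas you spell out the projectivization $\mu:\mathbb{P}(V)\times\mathbb{P}(V)\to\mathbb{P}(W)$ and the passage to the affine cone; this is the same content made explicit. The one genuine difference is your final descent step: you argue via Galois stability that the $\F^{\rm alg}$-closed set is cut out by equations over $\F$. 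This is correct but heavier than needed; it suffices to note that any $f\in\F^{\rm alg}[{\bf x}]$ with coefficients in a finite extension $\K\supset\F$ can be written as $\sum_i e_i f_i$ in an $\F$-basis $\{e_i\}$ of $\K$ with $f_i\in\F[{\bf x}]$, so for points in $\F^N$ the vanishing of $f$ is equivalent to the vanishing of all $f_i$, and hence the intersection of any $\F^{\rm alg}$-Zariski-closed set with $\F^N$ is $\F$-Zariski closed without any Galois or rationality hypotheses. The paper asserts this directly. Both routes are valid; yours just makes a slightly stronger (and unneeded) claim about being defined over $\F$.
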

\begin{proof} The set of multiquadratic polynomials in $ \F^{\rm alg}[{\bf x}]_{\rm MQ}$ that factor as the product of two multiaffine polynomials is the image of $ \F^{\rm alg}[{\bf x}]_{\rm MA} \times  \F^{\rm alg}[{\bf x}]_{\rm MA}$ under 
$(g, h)\mapsto g\cdot h$.  Since this map is bilinear, it follows from 
the projective elimination theorem that the set $\{q\in \F^{\rm alg}[{\bf x}]_{\rm MQ} : q = g\cdot h \text{ for some }g,h\in \F^{\rm alg}[{\bf x}]_{\rm MA}\}$ is Zariski-closed in $\F^{\rm alg}[{\bf x}]_{\rm MQ}$. 

Pulling back by the map $\Delta_{ij}$, it follows that for each $i,j\in [n]$, 
the set of polynomials $f\in \F^{\rm alg}[{\bf x}]_{\rm MA}$ for which $\Delta_{ij}(f)$ factors as the product of two multiaffine polynomials is Zariski-closed, as is their intersection over all $i,j\in [n]$. It follows that its intersection with $\F[{\bf x}]_{\rm MA}$ is Zariski-closed $\F[{\bf x}]_{\rm MA}$.
\end{proof}

Theorem~\ref{thm:DeltaDetRep} implies that the image of $\F^{n\times n}$ under the principal minor map is a subset of the variety $\mathcal{F}_n$, although as Example~\ref{ex:CounterExFactoring} shows, this containment can be strict. In order to show that $f_{2n+1}$ does not belong to the variety of 
$I_{2n+1}$, it suffices to show that $f_{2n+1}$ does not belong to $\mathcal{F}_{2n+1}$. 

Recall that for $f = \sum_{S\subseteq [n]} a_{S} {\bf x}^{[n]\backslash S}$, the coefficient vector of $f$ is defined to be 
\[{\rm coeff}(f) = (a_S)_{S\subseteq [n]} \in \F^{2^{[n]}}.\]

\begin{proof}[Proof of Theorem~\ref{thm:CounterEx}]
For convenience, let $f = f_{2n+1}$. 
Let $P\in I_{2n}$ be a homogenous polynomial vanishing on the image of $\F^{2n\times 2n}$ 
under the principal minor map. 
Let $Q$ denote the image of $P$ under inclusion into $\F[a_S:S\subseteq [2n+1]]$. 
Note that $Q$ only involves $a_{S}$ with $2n+1\not\in S$. 
Since our indexing of coefficients is inclusion reversing, we see that
the evaluation of $Q$ at the coefficient vector of $f$ depends only 
on coefficients of monomials containing $x_{2n+1}$. 
In particular, its evaluation at the coefficient vector of $f$ equals 
the evaluation of $P$ at the coefficient vector of derivative of $f$ with respect to $x_{2n+1}$, i.e. 
\begin{equation}\label{eq:P_no2n+1}
Q({\rm coeff}(f)) = P({\rm coeff}(\partial f/\partial x_{2n+1} )).   
\end{equation}

If $\F$ is finite, it suffices to replace it with any infinite field extension, such as $\F(t)$ or $\F^{\rm alg}$.  
Let $(\gamma, \pi)\in {\rm SL}_2(\F)^{2n+1} \rtimes S_{2n+1}$, with $\gamma$ generic, where $\F^{\rm alg}$
denote the (necessarily infinite) algebraic closure of $\F$. 
We can write $(\gamma, \pi)$ as the composition of elements 
$(\widehat{\gamma}, \widehat{\pi})$ in $ {\rm SL}_2(\F)^{2n} \rtimes S_{2n}$
and $(\gamma_{2n+1}, \sigma)$, where 
$\gamma_{2n+1} = {\tiny \begin{pmatrix} a & b \\ c & d \end{pmatrix}}\in  {\rm SL}_2(\F)$ acts on $x_{2n+1}$ and $\sigma$ is the transposition $\sigma = (m (2n+1))\in S_{2n+1}$.
Then 
\[
(\gamma_{2n+1}, \sigma)\cdot f = 
(c x_{2n+1} + d)f\left(x_1, \hdots, x_{m-1}, \frac{ax_{2n+1} + b}{c x_{2n+1} + d}, x_{m+1}, \hdots, x_{2n}, x_{m}\right).
\]
By the genericity of $\gamma$, $c\neq 0$ and 
\[
\frac{\partial}{\partial x_{2n+1}}\!\left((\gamma_{2n+1}, \sigma)\cdot f \right) 
= c f\bigl|_{\displaystyle{\{x_m = a/c, x_{2n+1} = x_m\}}}.
\]
Call this polynomial $g$. The coefficient $\lambda$ of $\prod_{i=1}^{2n}x_i$ in $g$
is $a+c$ for $m=1$ and $a$ for $m>1$. In either case, we can assume it is nonzero by the genericity of $\gamma$. 

By Lemma~\ref{lem:Det_x1} for $m=1$ and Lemma~\ref{lem:Det_x2n+1} for $m>1$, 
the polynomial $\frac{1}{\lambda} g$  is determinantal and its coefficient vector belongs to the image of the principal minor map. 
Since the image of the principal minor map is invariant under 
the action of $({\rm SL}_2(\F)^{2n} \rtimes S_{2n})$,
by \eqref{eq:P_no2n+1}, 
 \[
 0 = P({\rm coeff}(g)) = P({\rm coeff}((\widehat{\gamma}, \widehat{\pi})\cdot g))
 =  Q({\rm coeff}((\gamma,\pi)\cdot f)).
 \]
This shows that the coefficient vector of $f$ belongs to the variety of 
$({\rm SL}_2(\F)^{2n+1} \rtimes S_{2n+1})\cdot I_{2n}$. 

On the other hand, 
we calculate that 
\[
\Delta_{12}(f) = 
 (x_3 - x_{2n+1} ) 
\prod_{i=3}^{2n}(x_{i}x_{i+1} + 1).
\]
These form a cycle of length $2n-1$ of irreducible bivariate factors,  which cannot 
be factored as the product of two multiaffine polynomials. 
It follows that $f$ does not belong to the variety $\mathcal{F}_{2n+1}$ from Lemma~\ref{lem:ZariskiClosure}, 
which, by Theorem~\ref{thm:DeltaDetRep}, 
contains the variety of $I_{2n+1}$. 
\end{proof}

The polynomial $f_{2n+1}$ shows that the orbit of the ideal $I_{2n}$ under $({\rm SL}_2(\F)^{2n+1} \rtimes S_{2n+1})$ is not 
enough to cut out the set of polynomials $f\in \F[x_1, \hdots, x_{2n+1}]$ all of whose Rayleigh differences factor as the product of two multiaffine polynomials.  As Example~\ref{ex:CounterExFactoring} shows, even this is not enough to cut out the image of the principal minor map.

\bibliographystyle{plain}

\end{document}